\numberwithin{equation}{section}
\renewcommand{\PrintDOI}[1]{%
  \href{http://dx.doi.org/#1}{{\tt DOI:#1}}%
}
\renewcommand{\eprint}[1]{#1}
\newcounter{test}
\numberwithin{test}{section}
\theoremstyle{plain}
\newtheorem{theorem}{Theorem}[section]
\newtheorem{lemma}[theorem]{Lemma}
\newtheorem{proposition}[theorem]{Proposition}
\newtheorem{corollary}[theorem]{Corollary}
\newtheorem{conjecture}[theorem]{Conjecture}
\theoremstyle{definition}
\newtheorem{definition}[theorem]{Definition}
\newcommand{\relmid}{\mathrel{}\middle|\mathrel{}}
\newcommand{\bs}{\backslash}
\newcommand{\bR}{{\mathbb{R}}}
\newcommand{\R}{{\mathbb{R}}}
\newcommand{\Qcoh}{{\mathop{\mathrm{Qcoh}}}}
\newcommand{\E}{{\mathcal{E}}}
\newcommand{\Hom}{{\mathrm{Hom}}}
\newcommand{\D}{{\mathrm{D}}}
\newcommand{\dL}{{\mathbb{L}}}
\newcommand{\OX}{{\mathcal{O}}}
\newcommand{\ra}{{\rightarrow}}
\newcommand{\C}{{\mathbb{C}}}
\newcommand{\Sh}{\mathop{\mathrm{Sh}}}
\newcommand{\coh}{\mathop{\mathrm{coh}}}
\newcommand{\rank}{{\mathop{\mathrm{rank}}}}
\newcommand{\qtheta}{{\overline{\Theta'}}}
\newcommand{\ctheta}{{\overline{\Theta}}}
\newcommand{\Xs}{{X_{\Sigma}}}
\newcommand{\Xss}{{X_{\hat{\Sigma}}}}
\newcommand{\Vd}{{}\mathbb{D}}
\newcommand{\eeq}{{\end{equation}}}
\newcommand{\ueb}{\mathrm{b}}
\newcommand{\Z}{\mathbb{Z}}
\newcommand{\torus}{M_\mathbb{R}/M}
\newcommand{\Ls}{\overline{\Lambda_\Sigma}}
\newcommand{\Lss}{\overline{\Lambda_{\hat{\Sigma}}}}
\newcommand{\HSigma}{\hat{\Sigma}}
\newcommand{\coker}{\mathrm{Coker}}
\newcommand{\Cech}{\mathrm{\check{C}ech}}
\newcommand{\ms}{\mathrm{SS}}
\newcommand{\NC}{{\mathrm{NCCC}}}
\newcommand{\Ks}{{\kappa_{\Sigma}}}
\newcommand{\Kss}{{\kappa_{\hat{\Sigma}}}}
\newcommand{\lperp}{{^\perp{\hspace{-0.5mm}}}}
\newcommand{\Relint}{{\mathrm{Relint}}}
\newcommand{\Db}[2]{{\mathrm{D}_{#1}^{\mathrm{b}}\left(#2\right)}}
\newcommand{\hull}[1]{{\left\langle {#1}\right\rangle}}
\newcommand{\set}[1]{{\left\{#1\right\}}}
\newcommand{\Int}{{\mathrm{Int}}}
\newcommand{\pa}[1]{{\left({#1}\right)}}
\newcommand{\eqn}[1]{{\begin{equation}{#1}\end{equation}}}
\newcommand{\RG}[1]{{\R\Gamma\left(#1\right)}}
\newcommand{\RGL}[2]{{\R\Gamma_{#1}\left({#2}\right)}}
\newcommand{\Cl}[1]{{\mathrm{Cl}\left({#1}\right)}}
\newcommand{\tgamma}{{\tilde{\gamma}}}
\newcommand{\tdelta}{{\tilde{\delta}}}
\newcommand{\hgamma}{{\hat{\gamma}}}
\newcommand{\per}{{\mathop{\mathrm{Per}}}}
\title[NCCC for toric surfaces]{The nonequivariant coherent-constructible correspondence for toric surfaces}
\author{Tatsuki Kuwagaki}
\address{Graduate School of Mathematical Sciences, University of Tokyo, Komaba Meguroku,
Tokyo 153-8914, Japan}
\email{kuwagaki@ms.u-tokyo.ac.jp}
\begin{document}
\maketitle
\begin{abstract}
We prove the nonequivariant coherent-constructible correspondence conjectured by Fang-Liu-Treumann-Zaslow in the case of toric surfaces. Our proof is based on describing a semi-orthogonal decomposition of the constructible side under toric point blow-up and comparing it with Orlov's theorem. 
\end{abstract}

\section{Introduction}
The nonequivariant coherent-constructible correpondence ($\NC$) is a relation between the derived category of coherent sheaves on a toric variety and the derived category of constructible sheaves on a torus. $\NC$ is discovered by Bondal \cite{Bo} and formulated in terms of microlocal sheaf theory by Fang-Liu-Treumann-Zaslow \cite{FLTZ} as follows. 

Let $M$ be a free abelian group of finite rank and $N$ be its dual free abelian group. Let further $\Sigma$ be a smooth complete fan defined in $N_\R=N\otimes_{\Z}\R$ and $X_\Sigma$ be the toric variety defined by $\Sigma$. 
We write the bounded derived category of coherent sheaves on $\Xs$ by $\Db{}{\coh\Xs}$ and the bounded derived category of constructible sheaves on $M_\R/M$ by $\Db{c}{\torus}$. 
Here constructible sheaf means $\R$-constructible sheaf in the sense of \cite[\S 8.4]{KS}. We define $\Ls\subset T^*\torus$ as the coset of $\Lambda_\Sigma:=\bigcup_{\sigma\in \Sigma}\pa{\sigma^\perp+M}\times (-\sigma)\subset M_\R\times N_\R\cong T^*M_\R$. We write the full subcategory of $\Db{c}{\torus}$ spanned by objects whose microsupports  are contained in $\Ls$ by $\Db{c}{\torus, \Ls}$. It is known that there exists a fully-faithful functor 
\[\Ks:\Db{}{\coh\Xs}\hookrightarrow \Db{c}{\torus,\Ls}\]which will be defined in (\ref{functor}).
\begin{conjecture}[NCCC conjecture \cite{FLTZ2, Tr}]\label{conjecture}The functor $\Ks$ is an equivalence of triangulated categories
\begin{equation}
\Db{}{\coh \Xs}\cong \Db{c}{\torus, \Ls}.
\end{equation}
\end{conjecture}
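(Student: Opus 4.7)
The plan is to argue by induction on the number of rays of $\Sigma$. By the classification of smooth complete toric surfaces, every such $\Xs$ is obtained from one of the minimal models---$\mathbb{P}^2$ or a Hirzebruch surface $F_n$---by a finite sequence of toric blow-ups at fixed points. Consequently it suffices to (i) establish the equivalence for these minimal surfaces, and (ii) show that if the conjecture holds for $\Xs$, then it continues to hold after one further toric point blow-up, i.e.\ for the star subdivision $\HSigma$ of $\Sigma$ at a smooth 2-cone. The base cases can be handled by direct computation: each minimal toric surface carries a known full strong exceptional collection on the coherent side, and since $\Ks$ is already fully faithful, it is enough to check that the images of such a collection classically generate $\Db{c}{\torus,\Ls}$.

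For the inductive step, on the coherent side Orlov's blow-up formula gives, for $\pi\colon \Xss \to \Xs$ with exceptional divisor $E$, a semi-orthogonal decomposition
\[
\Db{}{\coh\Xss} = \hull{\Db{}{\coh\Xs}, \OX_E(-1)},
\]
in which the left piece is embedded via $\dL\pi^*$. My strategy is to produce a matching semi-orthogonal decomposition on the constructible side,
\[
\Db{c}{\torus, \Lss} = \hull{\Db{c}{\torus, \Ls}, \E},
\]
whose left piece is the inclusion induced by the containment $\Ls \subset \Lss$ (a star subdivision only refines $\Sigma$), and whose right piece $\E$ is classically generated by a single object corresponding to $\Kss(\OX_E(-1))$. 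Combined with the induction hypothesis and the compatibility $\Kss \circ \dL\pi^* \simeq \iota \circ \Ks$, where $\iota\colon \Db{c}{\torus,\Ls} \hookrightarrow \Db{c}{\torus,\Lss}$ is the canonical inclusion, this promotes the fully faithful embedding $\Kss$ to an equivalence.

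The main obstacle is constructing the semi-orthogonal decomposition on the constructible side. Under a smooth star subdivision of a 2-cone $\sigma$, the new pieces of $\Lss$ beyond $\Ls$ arise from the newly added ray and from the two new maximal cones it produces; translating this purely combinatorial change into an actual exact-triangle decomposition of constructible sheaves, verifying the requisite orthogonality, and exhibiting a single generator of the complement that matches $\Kss(\OX_E(-1))$, is the technical heart of the argument. The underlying calculation is microlocal and essentially local near the image of $\sigma^\perp$ in $\torus$, but gluing it back into the global picture and matching the two decompositions term by term is the step I expect to require the most care.
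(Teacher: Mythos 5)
Your inductive step is essentially the route the paper takes: Orlov's blow-up formula on the coherent side, a matching semi-orthogonal decomposition $\Db{c}{\torus,\Lss}\cong\hull{\E,\Db{c}{\torus,\Ls}}$ on the constructible side (the paper's Theorem \ref{sod}, with the extra object identified explicitly as $\Vd(p_*\C_Z)[-n]\simeq\Kss(\OX_E(E))$ in Lemma \ref{image}), and the compatibility $\Kss\circ\pi^*\cong\iota\circ\Ks$ of Theorem \ref{kappa}. The part you defer as the ``technical heart'' is indeed where all the work lies (the non-characteristic deformation lemma together with Lemmas \ref{edgevanishing} and \ref{nonvanishing}), but as a plan that portion is aligned with the paper.

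The genuine gap is in your treatment of the base cases. You propose to verify the conjecture for $\mathbb{P}^2$ and the Hirzebruch surfaces $\mathbb{F}_n$ ``by direct computation,'' using a full exceptional collection on the coherent side and then ``checking that the images classically generate $\Db{c}{\torus,\Ls}$.'' But full faithfulness of $\Ks$ plus a full exceptional collection only gives an embedding; the generation statement on the constructible side is exactly the essential-surjectivity problem, i.e.\ it \emph{is} Conjecture \ref{conjecture} for those fans, and you give no method for it. At this point the only available inputs are Treumann's zonotopal-unimodular theorem, which covers $\mathbb{P}^1\times\mathbb{P}^1$ but not $\mathbb{P}^2$ or $\mathbb{F}_n$ for $n\geq 1$ (their rays do not come in opposite pairs), and the cragged case, which fails for $\mathbb{F}_n$, $n\geq 2$. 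The paper avoids this precisely by proving the blow-up comparison as an if-and-only-if (Theorem \ref{iff}): since the decompositions on both sides match term by term, the conjecture transfers in \emph{both} directions, so $\mathbb{P}^2$ and $\mathbb{F}_n$ can be connected to $\mathbb{P}^1\times\mathbb{P}^1$ through chains of blow-ups \emph{and} blow-downs, and everything reduces to the one case already known. Your one-directional induction (``if it holds for $\Sigma$ then for $\HSigma$'') cannot reach the minimal models this way, so you must either supply the missing generation arguments for $\mathbb{P}^2$ and all $\mathbb{F}_n$ (hard, and not sketched) or upgrade your inductive step to the two-way equivalence --- which your own semi-orthogonal comparison in fact yields, as in the paper's proof of Theorem \ref{iff}.
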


This conjecture is proved in special cases (\cite{Tr,SS}, see also Theorem \ref{known}). The equivariant version of this conjecture is called the coherent-constructible correspondence and proved by Fang-Liu-Treumann-Zaslow \cite{FLTZ}. 

In this paper, we prove Conjecture \ref{conjecture} in dimension 2:
\begin{theorem} \label{intromain}
Conjecture \ref{conjecture} holds for any 2-dimensional smooth complete fans.
\end{theorem}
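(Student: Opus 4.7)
The plan is to induct on the number of rays of the fan, using the fact that any smooth complete toric surface is obtained from a minimal one (either $\mathbb{P}^2$ or a Hirzebruch surface $\mathbb{F}_n$) by a finite sequence of torus-equivariant blow-ups at torus-fixed points. On the level of fans, such a blow-up corresponds to the star subdivision of a $2$-dimensional cone $\sigma = \hull{v_1, v_2}$ by inserting the ray generated by $v_1 + v_2$. So it suffices to establish two things: the base case (NCCC for $\mathbb{P}^2$ and for the Hirzebruch surfaces), and an inductive step showing that if $\Ks : \Db{}{\coh \Xs} \to \Db{c}{\torus, \Ls}$ is an equivalence, then so is $\Kss$ for the blown-up fan $\HSigma$.

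For the base case I would invoke the previously established cases of Conjecture \ref{conjecture} (the theorem labeled \ref{known} in the introduction, due to Treumann and to Scherotzke–Sibilla): $\mathbb{P}^n$ is covered, and the Hirzebruch surfaces being projective toric bundles over $\mathbb{P}^1$ should reduce to that case via a projective bundle formula, or be treated by hand since the combinatorics is simple.

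The heart of the argument is the inductive step, matching Orlov's blow-up formula with a semi-orthogonal decomposition on the constructible side. Orlov's theorem gives, for the blow-up $\pi:\widetilde{\Xs}=\Xss \to \Xs$ of a point on a smooth surface, a decomposition
\begin{equation*}
\Db{}{\coh \Xss} = \hull{\Db{}{\coh \Xs},\, \OX_E},
\end{equation*}
where $E$ is the exceptional divisor and $\OX_E$ is a single exceptional object. Accordingly, on the constructible side I would look for a semi-orthogonal decomposition of $\Db{c}{\torus, \Lss}$ whose first piece is $\Db{c}{\torus, \Ls}$ (embedded via the inclusion $\Ls \subset \Lss$, since star subdivision enlarges $\Lambda$) and whose orthogonal complement is generated by a single object corresponding to $\OX_E$ under $\Kss$. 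Concretely, the new ray generator $v_1+v_2$ produces new covectors in $\Lss \setminus \Ls$, and I expect the extra generator to be (up to translation by $M$) the constructible sheaf associated to the equivariant line bundle $\OX_{\Xss}(-E)$ restricted appropriately, i.e.\ a standard object supported on a strip or parallelogram in $\torus$ dictated by the fan data near $\sigma$.

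The hard part, and the main obstacle, is proving the semi-orthogonality and generation on the constructible side purely combinatorially: I would need to show (i) the inclusion $\Db{c}{\torus,\Ls}\hookrightarrow \Db{c}{\torus,\Lss}$ admits a left (or right) adjoint, which amounts to a suitable microlocal projection and requires controlling $\Hom$'s between sheaves with microsupport in $\Lss\setminus\Ls$ and those in $\Ls$; (ii) a single explicit generator spans the complement; and (iii) $\Kss$ intertwines Orlov's decomposition with the constructible one, which reduces to checking the image of $\OX_E$ on a small number of toric-invariant line bundles and using adjunction with $\pi^*$. Once these microlocal-sheaf computations near the subdivided cone are done, the induction closes and Theorem \ref{intromain} follows.
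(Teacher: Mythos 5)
Your overall strategy---compare Orlov's blow-up decomposition with a semi-orthogonal decomposition of $\Db{c}{\torus,\Lss}$ whose large piece is $\Db{c}{\torus,\Ls}$, and propagate the conjecture along toric point blow-ups starting from minimal surfaces---is the same as the paper's (your ``hard part'' (i)--(iii) is exactly the content of Theorem \ref{sod}, Lemma \ref{image} and Theorem \ref{iff}, so I will not fault you for leaving it as a to-do list). The genuine gap is in your base case. Because your inductive step transports NCCC only in the blow-up direction, you must verify the conjecture independently for \emph{every} minimal model, i.e.\ for $\mathbb{P}^2$ and for all Hirzebruch surfaces $\mathbb{F}_n$. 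Theorem \ref{known} does not give you this: Treumann's result applies to zonotopal unimodular fans, which covers $\mathbb{P}^1\times\mathbb{P}^1$ but not $\mathbb{F}_n$ for $n\geq 1$ (the rays of $\mathbb{F}_n$ do not come in opposite pairs), and for $n\geq 2$ the fan of $\mathbb{F}_n$ is not cragged either: with ray generators $e_1,e_2,-e_1+ne_2,-e_2$ and $B=\set{e_1,-e_1+ne_2}$, the set $\mathrm{Cone}(B)\cap R$ contains $e_2$, so it generates all of $N$, of which $B$ spans only an index-$n$ sublattice. Your claim that ``$\mathbb{P}^n$ is covered'' is likewise asserted rather than checked (it certainly does not follow from the zonotopal case), there is no projective-bundle formula for NCCC available to quote, and ``treated by hand'' is precisely the nontrivial work that would be required---for infinitely many $\mathbb{F}_n$.

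The paper closes this gap by proving the transfer statement as an equivalence in \emph{both} directions (Theorem \ref{iff}): if NCCC holds for the blow-up $\HSigma$, Orlov's decomposition and Theorem \ref{sod} identify the two orthogonals of $\hull{\Kss\pa{\OX_E(kE)}}_{1\leq k\leq n-1}$, giving NCCC for $\Sigma$; conversely, if NCCC holds for $\Sigma$, any object of $\Db{c}{\torus,\Lss}$ orthogonal to the image of $\Db{}{\coh\Xss}$ is orthogonal both to the exceptional objects and to $\iota\Ks\pa{\Db{}{\coh\Xs}}$, hence vanishes by Theorem \ref{sod}. With both directions no new base cases are needed: every smooth complete toric surface, including $\mathbb{P}^2$ and every $\mathbb{F}_n$, is connected to $\mathbb{P}^1\times\mathbb{P}^1$ by a chain of toric point blow-ups \emph{and blow-downs} (e.g.\ $\mathbb{F}_n$ and $\mathbb{F}_{n+1}$ have a common toric blow-up, and $\mathbb{F}_1$ is the blow-up of $\mathbb{P}^2$), so Treumann's result for $\mathbb{P}^1\times\mathbb{P}^1$ is the only external input. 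So either strengthen your inductive step to an if-and-only-if and run the reduction to $\mathbb{P}^1\times\mathbb{P}^1$, or supply actual proofs for $\mathbb{P}^2$ and all $\mathbb{F}_n$; as written the induction does not get off the ground. (A minor remark: your item (i), the existence of an adjoint to $\Db{c}{\torus,\Ls}\hookrightarrow\Db{c}{\torus,\Lss}$, comes for free in the paper from admissibility of the subcategory generated by the exceptional objects $\Kss\pa{\OX_E(kE)}$; the microlocal work is concentrated in identifying its orthogonal with $\Db{c}{\torus,\Ls}$.)
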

Our proof is based on Theorem \ref{sod} below.

\begin{theorem}\label{sod}
Let $\Sigma$ be an $n$-dimensional smooth complete fan and $\HSigma$ be its blow-up at a torus fixed point. Then there exists a semi-orthogonal decomposition 
\begin{equation}
\Db{c}{\torus, \Lss}\cong \hull{\Vd \pa{p_*\C_{(n-1)\cdot Z}},..., \Vd \pa{p_*\C_{Z}}, \Db{c}{\torus, \Ls}}.
\end{equation}
\end{theorem}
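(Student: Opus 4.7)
The overall plan is to mimic Orlov's semi-orthogonal decomposition for blow-ups along a smooth center, working entirely on the constructible side. Three things must be checked: that the candidate objects $\Vd(p_*\C_{k\cdot Z})$ for $k=1,\ldots,n-1$ genuinely lie in $\Db{c}{\torus,\Lss}$; that they are semi-orthogonal in the prescribed order and semi-orthogonal to $\Db{c}{\torus,\Ls}$; and that together they generate $\Db{c}{\torus,\Lss}$. The inclusion $\Db{c}{\torus,\Ls}\hookrightarrow \Db{c}{\torus,\Lss}$ is immediate from $\Lambda_\Sigma\subset\Lambda_{\hat\Sigma}$, which holds because star-subdividing a smooth $n$-cone $\sigma$ by the ray through $v_0:=v_1+\cdots+v_n$ only enlarges the collection of dual cones.

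Identifying the candidate objects requires pinning down $Z$ and $p$. I would take $Z$ to be the image in $\torus$ of an appropriate slab in $M_\R$ associated to $v_0$, with $p$ the quotient map, so that $k\cdot Z$ is the natural $k$-fold dilate. Verifying that $\Vd(p_*\C_{k\cdot Z})\in \Db{c}{\torus,\Lss}$ is then a direct microsupport computation using functoriality of $\ms$ under proper direct image and Verdier duality from \cite{KS}; by design the microsupport picks up precisely the new rays introduced by the star-subdivision, so it sits inside $\Lambda_{\hat\Sigma}$.

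Semi-orthogonality then splits into two verifications. For $\Hom(\mathcal{G},\Vd(p_*\C_{k\cdot Z}))$ with $\mathcal{G}\in\Db{c}{\torus,\Ls}$, adjunction and Verdier duality rewrite this as sections of $\mathbb{D}\mathcal{G}$ on $k\cdot Z$; the microsupport constraint on $\mathcal{G}$ combined with degree counting forces the pairing to vanish, mirroring the standard $H^*(\mathbb{P}^{n-1},\mathcal{O}(-k))=0$ computation for $1\leq k\leq n-1$. The internal Hom-vanishing between $\Vd(p_*\C_{j\cdot Z})$ and $\Vd(p_*\C_{k\cdot Z})$ for $j>k$ similarly reduces to a cohomology computation on the exceptional stratum, which is the constructible-side mirror of $\Hom(\mathcal{O}_E(-j),\mathcal{O}_E(-k))=0$ on $\mathbb{P}^{n-1}$ in the relevant range.

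The main obstacle is generation. My plan is a peeling procedure: given $\mathcal{F}\in\Db{c}{\torus,\Lss}$, extract a top exceptional coefficient along the stratum attached to $v_0$ via a microlocal restriction or nearby-cycles construction, identify it up to shift with copies of $\Vd(p_*\C_{(n-1)\cdot Z})$ using a local classification of sheaves microsupported in the newly added component of $\Lambda_{\hat\Sigma}$, subtract it via an exact triangle, and check that the residue has microsupport strictly closer to $\Lambda_\Sigma$. Iterating down to $k=1$ reduces $\mathcal{F}$ to an object of $\Db{c}{\torus,\Ls}$. The delicate part is ensuring each subtraction genuinely shrinks the microsupport rather than merely rearranging it; this requires a precise local normal form for sheaves in $\Db{c}{\torus,\Lss}$ along the exceptional stratum, matching the structure inherited from the $n-1$ copies of $D^b(\mathrm{pt})$ predicted by Orlov.
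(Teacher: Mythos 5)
Your overall strategy (semi-orthogonality plus generation) is logically sound, but the generation step---which you yourself flag as the delicate part---is precisely the hard content of the theorem, and your sketch of it has a genuine gap. The peeling procedure requires (a) a local classification or normal form for objects of $\Db{c}{\torus,\Lss}$ microsupported along the new components $\pa{\sigma^\perp+M}\times(-\sigma)$, $\rho_E\subset\sigma$, strong enough to identify the ``top exceptional coefficient'' with copies of $\Vd\pa{p_*\C_{(n-1)\cdot Z}}$ up to shift, and (b) a proof that subtracting this piece strictly shrinks the microsupport rather than creating new components elsewhere. Neither is supplied, neither is standard, and (a) is essentially a local form of the NCCC-type statement one is trying to prove; microsupport is not additive in exact triangles (only the triangle inequality $\ms(\mathcal{F})\subset\ms(\mathcal{F}')\cup\ms(\mathcal{F}'')$ holds), so ``strictly closer to $\Lambda_\Sigma$'' does not follow from the construction as described. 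Your orthogonality steps are also stated too loosely (``degree counting'' does not by itself give the vanishing of $\R\Hom\pa{p_*\C_{k\cdot Z},\Vd\E}$; in the paper this takes a full non-characteristic deformation argument), but those could plausibly be filled in; the generation step cannot, as written.

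The paper avoids generation altogether, and that is the essential difference. Since $\Kss$ is already known to be fully faithful and $\Kss\pa{\OX_E(kE)}\simeq\Vd\pa{p_*\C_{k\cdot Z}}[-n]$ (Lemma \ref{image}, proved via a \v{C}ech resolution of $\OX_E(E)$ and the monoidal property of $\Kss$), the objects $\Vd\pa{p_*\C_{k\cdot Z}}$ are exceptional and generate admissible subcategories; Bondal--Kapranov then gives for free a semi-orthogonal decomposition of $\Db{c}{\torus,\Lss}$ into these exceptional objects and their orthogonal complement. The work is then shifted to identifying that orthogonal with $\Db{c}{\torus,\Ls}$: one inclusion is the orthogonality computation you anticipated (done via the non-characteristic deformation lemma, Theorem \ref{nonchara}), and the other---the replacement for your peeling step---is a microlocal non-vanishing criterion (Lemma \ref{nonvanishing}) showing that if $\ms(\E)$ met $\Lss\setminus\Ls$, then some $\R\Hom\pa{\E,\Vd\pa{p_*\C_{k\cdot Z}}}$ would be nonzero. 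Your proposal contains no counterpart of this step, and without either it or a worked-out peeling argument the decomposition is not established.
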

Here $p\colon M_\R\ra \torus$ is the quotient map, $\Vd$ is the Verdier duality functor, and $Z$ is a locally closed subset of $M_\R$ which will be defined in (\ref{Z}). 
This formula is an analogue of Orlov's theorem on the semi-orthogonal decomposition of derived category of coherent sheaves under blowing-up \cite{Or, BOr}. In the situation of Theorem \ref{sod}, Orlov's theorem says that the derived category of coherent sheaves on $\Xss$ has a semi-orthogonal decomposition
\begin{equation}\label{orlov}
\Db{}{\coh\Xss}\cong \hull{\OX_E\pa{(n-1)E}, \dots, \OX_E(E), \pi^*\Db{}{\coh \Xs}},
\end{equation}
where $\pi\colon \Xss\rightarrow \Xs$ is the blow-up morphism and $E$ is the exceptional divisor. In Lemma \ref{image}, we will prove $\Kss\pa{\OX_E(kE)}\cong \Vd\pa{p_*\C_{kZ}}[-n]$. Then one can identify Theorem \ref{sod} with Orlov's theorem via NCCC. Comparing the semi-orthogonal components, we have the following:
\begin{theorem}\label{iff}
Let $\Sigma$ be an $n$-dimensional smooth complete fan and $\HSigma$ be its blow-up at a torus fixed point.
Conjecture \ref{conjecture} holds for $\Sigma$ if and only if so is for $\hat{\Sigma}$.
\end{theorem}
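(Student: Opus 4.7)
The plan is to transport Orlov's semi-orthogonal decomposition (\ref{orlov}) across the fully faithful functor $\Kss$ and match it term-by-term with the decomposition of $\Db{c}{\torus, \Lss}$ supplied by Theorem \ref{sod}. Lemma \ref{image} already matches $n-1$ of the $n$ summands: since $\Kss$ sends $\OX_E(kE)$ to $\Vd(p_*\C_{kZ})[-n]$, it restricts to an equivalence from each one-object component $\hull{\OX_E(kE)}$ of (\ref{orlov}) onto the corresponding component $\hull{\Vd(p_*\C_{kZ})}$ of Theorem \ref{sod}, for $k = 1,\dots,n-1$.

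To match the remaining component, I would establish a compatibility of the form $\Kss \circ \pi^* \cong \iota \circ \Ks$, where $\iota\colon \Db{c}{\torus, \Ls} \hookrightarrow \Db{c}{\torus, \Lss}$ is the inclusion coming from $\Ls \subset \Lss$. This identifies the restriction of $\Kss$ to $\pi^*\Db{}{\coh\Xs}$ with $\Ks$ (followed by $\iota$), so in particular it lands inside the last component $\Db{c}{\torus, \Ls}$ of Theorem \ref{sod}.

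Once both identifications are in place, $\Kss$ becomes a fully faithful triangulated functor between categories equipped with parallel semi-orthogonal decompositions which is already an equivalence on the first $n-1$ components. A standard fact about semi-orthogonal decompositions (essential surjectivity of a fully faithful SOD-preserving functor can be checked componentwise, since every object admits a canonical filtration by the pieces) then yields that $\Kss$ is an equivalence if and only if its restriction to the final component is, i.e., if and only if $\Ks$ is. This gives the desired biconditional.

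The hard part will be verifying the naturality $\Kss \circ \pi^* \cong \iota \circ \Ks$ from the construction of the CCC functor in (\ref{functor}). Concretely, this amounts to checking that the microlocal sheaf attached to a line bundle on $\Xs$ depends only on its toric data and not on the ambient fan, so that passing to the refinement $\HSigma$ merely enlarges the Lagrangian containing the microsupport without altering the sheaf itself. Once this compatibility is in hand, everything else is a formal manipulation of the two SODs.
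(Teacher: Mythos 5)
Your argument is essentially the paper's proof: compare Orlov's decomposition (\ref{orlov}) with the decomposition of Theorem \ref{sod} through the fully faithful functor $\Kss$, using Lemma \ref{image} to identify the exceptional components and the compatibility $\Kss\circ\pi^*\cong\iota\circ\Ks$ to identify the last component, then conclude by matching orthogonals/components. The only remark is that the ``hard part'' you flag is not something to be proved here: the identity $\Kss\circ\pi^*\cong\iota\circ\Ks$ is exactly Theorem \ref{kappa}(1), quoted from Fang--Liu--Treumann--Zaslow and Treumann, so it can simply be cited.
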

In the case of toric surfaces, toric MMP and Theorem \ref{iff} allow us to reduce Conjecture \ref{conjecture} to the case of $\mathbb{P}^1\times\mathbb{P}^1$ which is already proved by Treumann \cite{Tr}. 

Nadler and Zaslow \cite{NZ, Nad} identifies the derived Fukaya category of a cotangent bundle with the bounded derived category of constructible sheaves on its base space. By using this result, Fang-Liu-Treumann-Zaslow \cite{FLTZ2} relates NCCC with homological mirror symmetry. Combining their results with our results, we obtain a version of homological mirror symmetry for toric surfaces:
\begin{corollary}\label{corhms}
Let $\Sigma$ be a $2$-dimensional smooth complete fan. Then there exists an equivalence of triangulated categories
\begin{equation}
\Db{}{\coh \Xs}\cong\D\mathrm{Fuk}\pa{T^*\torus, \Ls}.
\end{equation}
\end{corollary}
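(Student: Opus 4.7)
The plan is to chain together three equivalences. First, Theorem \ref{intromain} provides
\[\Ks\colon \Db{}{\coh \Xs}\xrightarrow{\sim} \Db{c}{\torus,\Ls}\]
in the 2-dimensional case, so the remaining task is to identify the right-hand side with $\D\mathrm{Fuk}\pa{T^*\torus,\Ls}$.

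Next, I would invoke the Nadler-Zaslow theorem \cite{NZ, Nad}, which yields a quasi-equivalence between the bounded derived category of constructible sheaves on a real analytic manifold $X$ and the infinitesimal derived Fukaya category of $T^*X$. Under this equivalence, the microsupport of a constructible sheaf corresponds to the asymptotic behavior of the mirror Lagrangian at infinity in the cotangent fibers. Restricting to sheaves with microsupport in $\Ls$ should therefore cut out exactly the subcategory of Lagrangian branes asymptotic to $\Ls$.

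The explicit match in the toric setting has been carried out by Fang-Liu-Treumann-Zaslow in \cite{FLTZ2}, who establish
\[\Db{c}{\torus,\Ls}\cong \D\mathrm{Fuk}\pa{T^*\torus,\Ls}.\]
Composing this with $\Ks$ yields the corollary.

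The main obstacle is bookkeeping rather than novel geometry. One needs to verify that the Fukaya category appearing in \cite{FLTZ2} agrees with the one intended on the right-hand side of the corollary, and that the Nadler-Zaslow theorem (originally formulated with simply connected bases in mind) can be applied to the non-simply-connected manifold $\torus$; the latter point is handled in \cite{FLTZ2} by pulling back to the universal cover $M_\R$ and exploiting equivariance, and this is the only place where one must be careful to match conventions.
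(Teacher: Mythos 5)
Your proposal follows the paper's own route: compose the equivalence $\Ks$ from Theorem \ref{intromain} with the Nadler--Zaslow equivalence of Theorem \ref{hms}, using the fact (recalled in Section 3) that $\D\mathrm{Fuk}\pa{T^*\torus,\Ls}$ is \emph{defined} in \cite{FLTZ2} as the essential image of $\Db{c}{\torus,\Ls}$ under that equivalence, so the second step is essentially definitional rather than something needing a microsupport-to-asymptotics match. Your extra worries about conventions and non-simply-connected bases are harmless but not needed here, since Theorem \ref{hms} as cited applies to any real analytic manifold.
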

\noindent The notations will be explained in Section 2.

This paper is organized as follows. In Section 2, we briefly recall microlocal sheaf theory of Kashiwara-Schapira \cite{KS}. In Section 3, we review an aspect of the $\NC$ and collect notations. In Section 4, we prove Theorem \ref{sod}. Finally, in Section 5, we give a proof of Theorem \ref{intromain} and \ref{iff}.

\section*{Acknowledgements}
I would like to express my gratitude to Professor Kazushi Ueda and Professor Shinobu Hosono for many comments and  encouragements. I would like to thank Yuichi Ike and Fumihiko Sanda for having many stimulating discussions and giving many comments on this paper.  I also thank Ike and Takahiro Saito for answering my questions on microlocal sheaf theory many times, Daisuke Inoue and Makoto Miura for many comments at the seminars. I also would like to thank for the referee for useful comments. This work was supported by the program for Leading Graduate Schools, MEXT, Japan.

\section{Backgrounds from microlocal sheaf theory}
Let $Y$ be a differentiable manifold and $\Db{}{\Sh Y}$ be the derived category of $\C_Y$-module sheaves on $Y$. In this section, we always assume that  $n\geq 2$.

\begin{definition}[{\cite[Defintion 5.1.2]{KS}}]
For $\E\in \Db{}{\Sh Y}$, the {\em microsupport} $\ms(\E)$ of $\E$ is a closed subset of $T^*Y$ defined as follows;
for $(x,\xi)\in T^*Y$, $(x,\xi)$ is not contained in $\ms(\E)$ if there exists an open neighbourhood $V$ of $(x,\xi)$ such that for any $C^1$ function $\psi$ with $\mathrm{Graph}(d\psi)\subset V$ and  
\begin{equation}
(\R\Gamma_{\{y|\psi(y)\geq\psi(x)\}}\E)_x\simeq 0.
\end{equation}
\end{definition}
The microsupport detects the direction where the cohomology of the sheaf does not extend isomophically.

For a cone $\gamma\subset N_\bR$, we define the dual cone $\gamma^\vee$ as
\begin{equation}
\gamma^\vee:=\set{m\in M_\R \relmid \hull{m, n}\geq 0 \text{ for any }n\in \gamma}.
\end{equation}
For a subset $Z\subset Y$, we write the interior of $Z$ by $\Int(Z)$ and the relative interior of $Z$ by $\Relint(Z)$.
We say a closed convex cone $\gamma$ is {\em proper} (or strongly convex) if it satisfies $\gamma\cap (-\gamma)=\{0\}$.

\begin{proposition}[{\cite[Proposition 5.1.1]{KS}}]\label{msanother}
For $\E\in \Db{}{\Sh \R^n}$ and $(x_0,\xi_0)\in T^*\R^n$, $(x_0, \xi_0)\in \ms(\E)$ is equivalent to the following: For any neighbourhood $V$ of $x_0$, any positive integer $\epsilon$, and any proper convex cone $\delta$ with $\xi_0\in \Int(\delta^\vee)$, there exists $x\in V$ such that 
\begin{equation}
\RG{H\cap (x-\delta), \E}\xrightarrow{\not\simeq} \RG{L\cap (x-\delta), \E}
\end{equation}
where
\begin{align}
H:=\set{y\in \R^n\relmid \hull{y-x_0, \xi_0}\geq -\epsilon},\\
L:=\set{y\in \R^n\relmid \hull{y-x_0, \xi_0}=-\epsilon}.
\end{align}

\end{proposition}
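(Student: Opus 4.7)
The plan is to prove both implications of the equivalence by translating between the smooth-function formulation of the microsupport and the polyhedral/conic formulation, with the cones $\delta$ playing the role of cofinal approximations to sublevel sets of $C^1$ functions with gradient near $\xi_0$.

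Step 1: Reformulate the displayed non-isomorphism cohomologically. Using the excision triangle for the pair $L \subset H$, the failure of $\R\Gamma(H\cap(x-\delta),\E) \to \R\Gamma(L\cap(x-\delta),\E)$ to be an isomorphism is equivalent to non-vanishing of a local cohomology group of the form $\R\Gamma_{W}(\E)$ supported on $W = (x-\delta)\cap\{y \mid \hull{y-x_0,\xi_0}\geq -\epsilon\}$. This puts both sides of the proposition into a common cohomological language.

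Step 2 (cone condition implies $(x_0,\xi_0)\in\ms(\E)$): Contrapositively, assume $(x_0,\xi_0)\notin\ms(\E)$, so there is a neighbourhood $V$ of $(x_0,\xi_0)$ satisfying the defining vanishing property for all $\psi$ with $\mathrm{Graph}(d\psi)\subset V$. I then need to produce the cone isomorphism for appropriately small $\delta$ and $\epsilon$. The idea is to realise the set $x-\delta$ as (essentially) the superlevel set $\{y\mid \psi(y)\geq \psi(x)\}$ of a well-chosen $C^1$ function $\psi$ whose graph lies inside $V$: one takes $\psi$ to be a smooth approximation to the support function of $-\delta$ translated to $x$, perturbed so that $d\psi$ stays close to $\xi_0$ (which is possible precisely because $\xi_0\in\Int(\delta^\vee)$). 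The vanishing of stalks of $\R\Gamma_{\{\psi\geq\psi(x)\}}\E$ then propagates, by a standard non-characteristic deformation argument (Kashiwara–Schapira's Proposition 2.7.2), to the desired isomorphism on the polyhedral set.

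Step 3 (converse): Assuming $(x_0,\xi_0)\in\ms(\E)$, I must show the cone condition. Given any $V$, $\epsilon$, $\delta$, I want to exhibit $x\in V$ violating the isomorphism. By the definition of the microsupport, for the smaller open neighbourhood $V'\subset V$ of $(x_0,\xi_0)$ in $T^*\R^n$ one can find $x$ near $x_0$ and a function $\psi$ with $d\psi(x)$ close to $\xi_0$ and $(\R\Gamma_{\{\psi\geq\psi(x)\}}\E)_x\ne 0$. Because $\xi_0\in\Int(\delta^\vee)$, the cone $-\delta$ lies strictly inside the open half-space $\{v\mid \hull{v,d\psi(x)}<0\}$ once $d\psi(x)$ is close enough to $\xi_0$, so $x-\delta$ is contained in $\{\psi\leq\psi(x)\}$ in a neighbourhood of $x$. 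A non-characteristic deformation between $\{\psi\geq \psi(x)\}\cap H$ and $(x-\delta)\cap H$ then transports the non-vanishing stalk into the non-isomorphism predicted by the proposition.

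The main obstacle is the deformation step common to both directions: one must verify that replacing a smooth superlevel set by a polyhedral cone-set does not alter the relevant cohomology. This requires choosing the family of interpolating closed sets so that their conormals avoid $\ms(\E)$ (in the forward direction) or so that microlocal non-characteristicity is preserved (in the reverse direction). Carefully arranging these non-characteristic conditions, with the parameters $V$, $\epsilon$, $\delta$ shrunk consistently, is the essential technical content; once it is in hand, the equivalence of the two formulations follows formally.
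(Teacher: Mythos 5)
The paper never proves this proposition at all: it is quoted from Kashiwara--Schapira, \cite[Proposition 5.1.1]{KS}, and the intended justification is that citation, so there is no internal argument to compare with; your proposal must therefore stand as a proof on its own, and it does not. The genuine gap is in your Step 3, the direction asserting that $(x_0,\xi_0)\in\ms(\E)$ forces the non-isomorphism for \emph{every} admissible $V$, $\epsilon$, $\delta$. There you take a single $x$ and a single $C^1$ function $\psi$ with $(\R\Gamma_{\{\psi\geq\psi(x)\}}\E)_x\not\simeq 0$ and claim that a ``non-characteristic deformation'' between $\{\psi\geq\psi(x)\}\cap H$ and $(x-\delta)\cap H$ transports this non-vanishing into the desired non-isomorphism. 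But the deformation machinery (Theorem \ref{nonchara} fed by Lemma \ref{cone2}) requires the conormals of the interpolating family to avoid $\ms(\E)$, and in this direction you have no upper bound on $\ms(\E)$ at all: the hypothesis is that $\ms(\E)$ \emph{contains} the point $(x_0,\xi_0)$, and it may contain arbitrary covectors over points arbitrarily close to $x_0$, so the non-characteristicity you invoke cannot be verified and the step fails. The set-theoretic inclusion $x-\delta\subset\{\psi\leq\psi(x)\}$ near $x$ carries no cohomological content by itself. The workable argument for this direction is contrapositive: assume the restriction map is an isomorphism for \emph{all} $x\in V$ (for some fixed $\epsilon$, $\delta$) and deduce the stalkwise vanishing required by the definition of the microsupport; it is precisely the ``isomorphism for all translates'' hypothesis that supplies condition (3) of the deformation lemma, which a single non-vanishing stalk cannot.

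Your Step 2 is closer to correct but still defers the same point: knowing $(x_0,\xi_0)\notin\ms(\E)$ only controls covectors in a conic neighbourhood of $\xi_0$ over points near $x_0$, so one must exploit the existential quantifier and choose $\delta$ with $\delta^\vee$ a thin cone about $\xi_0$, then deform by sliding the half-space $H$ towards $L$ and check the conormal condition at \emph{all} boundary points of the moving sets, including the lateral faces of $x-\delta$, not only the vertex; smoothing the support function of $-\delta$ is neither needed nor sufficient, since the definition of the microsupport yields only stalkwise vanishing at points where the differential lies in the controlled region, and converting that into the global isomorphism is exactly a deformation-lemma argument. Since the ``essential technical content'' you explicitly postpone is the entire proof, the proposal does not establish the proposition; the correct course here is simply to cite \cite[Proposition 5.1.1]{KS}, as the paper does.
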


\begin{theorem}[the non-characteristic deformation lemma {\cite[Proposition 2.7.2]{KS}}]\label{nonchara}
Let $I$ be an open interval in $\R$, $\{U_s\}_{s\in I}$ be a family of open subsets in Y, and $\E\in \D^\ueb(\Sh Y)$  satisfying the following:
\begin{enumerate}[label={\upshape(\arabic*)}]
\item $U_s=\displaystyle\bigcup_{t<s}U_t$ for any $s\in I$.
\item $U_t\bs U_s$ is relatively compact for any $(s,t)\in I^2$.
\item $\pa{\R\Gamma_{Y\bs U_t}\E}_x\simeq 0$ for any $s\leq t$ and $x\in \bigcap_{u> s}\mathrm{Cl}\pa{U_u\bs U_s}\bs U_t$ where $\mathrm{Cl}$ denotes taking closure.
\end{enumerate}
Then, we have $\RG{\bigcup_{t\in I}U_t,\E}\xrightarrow{\sim}\RG{U_s,\E}$ for any $s\in I$.
\end{theorem}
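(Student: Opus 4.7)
The plan is to run a continuity argument in the parameter $t$. Fix $s_0 \in I$ and set
\begin{equation}
J := \set{t \in I \relmid t \geq s_0 \text{ and } \R\Gamma(U_t, \E) \to \R\Gamma(U_{s_0}, \E) \text{ is an isomorphism}}.
\end{equation}
If one shows $J = I \cap [s_0, \infty)$, then condition~(1), together with the standard computation of cohomology over an increasing union of opens (via flabby resolutions and the vanishing of $R^1\lim$ for surjective systems), yields $\R\Gamma\pa{\bigcup_{t\in I} U_t, \E} \simeq \R\Gamma(U_{s_0}, \E)$, which is the desired conclusion.

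Non-emptiness ($s_0 \in J$) is immediate. For closedness under increasing limits, if $t_n \uparrow t$ with all $t_n \in J$, then condition~(1) writes $U_t = \bigcup_n U_{t_n}$, and the same flabby-resolution argument gives $\R\Gamma(U_t, \E) \simeq \R\lim_n \R\Gamma(U_{t_n}, \E) \simeq \R\Gamma(U_{s_0}, \E)$, whence $t \in J$.

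The substantive step is that $J$ is open from above: for $t \in J$ with $t < \sup I$, one must produce $\epsilon > 0$ such that $[t, t+\epsilon) \cap I \subset J$. From the distinguished triangle
\begin{equation}
\R\Gamma_{U_{t+\epsilon} \bs U_t}\pa{U_{t+\epsilon}, \E} \to \R\Gamma(U_{t+\epsilon}, \E) \to \R\Gamma(U_t, \E) \xrightarrow{+1}
\end{equation}
and the fact that $t \in J$, it suffices to show the left-hand term vanishes for some small $\epsilon > 0$. Condition~(2) makes $\overline{U_{t+\epsilon} \bs U_t}$ compact. Applying condition~(3) with our $t$ in the role of $s$ and $t + \epsilon$ in the role of $t$, the stalks of $\R\Gamma_{Y \bs U_{t+\epsilon}} \E$ vanish at every point of $\bigcap_{u > t} \mathrm{Cl}(U_u \bs U_t) \bs U_{t+\epsilon}$. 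One then covers the compact frontier by finitely many open sets on which this stalk vanishing upgrades (in a standard way) to sectional vanishing, chooses $\epsilon$ small enough that the slab $U_{t+\epsilon} \bs U_t$ lies inside the cover, and assembles the local vanishings into the required global one by a Mayer--Vietoris / finite-induction argument.

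The main obstacle is precisely this last step. Condition~(3) provides only pointwise information on the ``boundary'' frontier at $\epsilon = 0$, whereas the vanishing actually needed is a global cohomological statement on the inflated slab $U_{t+\epsilon} \bs U_t$ for a \emph{single} $\epsilon > 0$. Extracting such a uniform $\epsilon$, by balancing the compactness afforded by~(2) against the finite cover of the frontier produced from~(3), is the technical heart of the deformation lemma.
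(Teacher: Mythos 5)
This statement is not proved in the paper at all: it is quoted from Kashiwara--Schapira (Proposition 2.7.2 of \cite{KS}), so there is no internal proof to compare with, and your proposal must be judged on its own. As written it has a genuine gap, and it is exactly the one you flag yourself. The continuity-method skeleton (the set $J$, nonemptiness, stability under increasing limits via flabby resolutions and vanishing of $\varprojlim^1$ for surjective systems) is fine, but the whole content of the lemma sits in the step you do not carry out, and the route you sketch for it does not work. Hypothesis (3) gives vanishing of stalks of $\R\Gamma_{Y\setminus U_{t+\epsilon}}\E$ (resp. $\R\Gamma_{Y\setminus U_t}\E$) only at points of the frontier set $Z_t:=\bigcap_{u>t}\mathrm{Cl}(U_u\setminus U_t)$, not at the remaining points of the slab $U_{t+\epsilon}\setminus U_t$; and stalkwise vanishing of a complex $G$ on a compact set $K$ does not ``upgrade in a standard way'' to $\R\Gamma(V;G)\simeq 0$ for some fixed neighbourhood $V\supset K$. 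What compactness actually yields, via $\varinjlim_{V\supset K}\R\Gamma(V;G)\simeq\R\Gamma(K;G|_K)$, is a statement only in the colimit over neighbourhoods, i.e.\ over $\epsilon\downarrow 0$: one obtains $\varinjlim_{t'>t}H^j(U_{t'};\E)\xrightarrow{\sim}H^j(U_t;\E)$, which does not produce a single $\epsilon>0$ with $\R\Gamma_{U_{t+\epsilon}\setminus U_t}(U_{t+\epsilon},\E)\simeq 0$. A colimit of restriction maps can be an isomorphism without any individual map being one, so your set $J$ has not been shown to be ``open from above,'' and there is no reason it should be under the stated hypotheses.

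Consequently the open--closed argument does not close as you have set it up. The actual proof in \cite{KS} keeps the two regimes separate: it establishes the colimit isomorphism for $t'\downarrow t$ as above, establishes the statement for increasing limits (where Mittag--Leffler conditions and $\varprojlim^1$ must be controlled degree by degree, using boundedness of $\E$), and then argues on individual cohomology classes (for instance, for injectivity, taking a class killed on $U_s$ and considering the supremum of the parameters where it is killed, then pushing past it with the colimit statement), never asserting that some fixed nearby restriction map is an isomorphism. So the step you describe as the ``technical heart'' is not a routine compactness--plus--Mayer--Vietoris assembly of a uniform $\epsilon$; it requires a genuinely different mechanism, and your proposal is an outline whose essential step is missing.
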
\noindent
Theorem \ref{nonchara} holds even if $Y$ is not a manifold but $Y$ is simply a Hausdorff topological space.

For a subset $Z$ of $Y$, we define the {\em strict normal cone} $N_xZ\subset T_xY$ of $Z$ at $x\in Y$ as follows; the tangent vector $\xi\in T_xY$ is not contained in $N_xZ$ if there exists a local coordinate $U$ of $x$ and two sequences $\{x_i\}_{i\in \mathbb{N}}\subset U\bs Z$ and $\{y_i\}_{i\in \mathbb{N}}\subset U\cap Z$ satisfying convergence conditions; $x_i, y_i\ra x$ and $x_i-y_i/|x_i-y_i|\ra \xi/|\xi|$ in $U$. We define the {\em conormal cone} $N_x^*Z$ of $Z$ at $x$ as the dual cone $N_xZ^\vee$ of $N_xZ$.

\begin{lemma}\label{cone1}
Let $\gamma$ be a closed convex cone in $\R^n$. Then we have
\begin{enumerate}
\item $N_0^*\gamma=\gamma^\vee$, and
\item $N_0^*(\R^n\bs \gamma) =-\gamma^\vee$.
\end{enumerate}
\end{lemma}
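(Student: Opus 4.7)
The plan is to compute the strict normal cones $N_0\gamma$ and $N_0(\R^n\setminus\gamma)$ directly from the definition and then take polar cones. I claim $N_0\gamma=\Int(\gamma)$; part (1) then follows from the standard identity $(\Int\gamma)^\vee=\gamma^\vee$ for a closed convex cone with nonempty interior.

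For the inclusion $\Int(\gamma)\subset N_0\gamma$, I would argue by contradiction. Suppose $\xi\in\Int(\gamma)$ and that there exist sequences $\{x_i\}\subset\R^n\setminus\gamma$ and $\{y_i\}\subset\gamma$ with $x_i,y_i\to 0$ and $(x_i-y_i)/|x_i-y_i|\to\xi/|\xi|$. Setting $t_i:=|x_i-y_i|$, we have
\[
x_i=y_i+t_i\,\frac{\xi}{|\xi|}+o(t_i).
\]
Because $\gamma$ is a closed convex cone and $\xi/|\xi|\in\Int(\gamma)$, the point $y_i+t_i\xi/|\xi|$ lies in $\Int(\gamma)$ at distance of order $t_i$ from $\partial\gamma$, so the $o(t_i)$ perturbation keeps $x_i$ inside $\gamma$, contradicting $x_i\notin\gamma$. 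Conversely, if $\xi\notin\Int(\gamma)$ I would construct such sequences directly. When $\xi\notin\gamma$, taking $y_i=0$ and $x_i=\xi/i$ works since $\gamma$ is a cone. When $\xi\in\partial\gamma$, a diagonal argument applied to a sequence $\xi_j\to\xi$ with $\xi_j\notin\gamma$ (which exists because $\gamma$ is closed and $\xi$ is a boundary point) produces the required $x_i$ with direction $x_i/|x_i|\to\xi/|\xi|$.

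Part (2) is entirely symmetric: exchanging the roles of $\gamma$ and $\R^n\setminus\gamma$ in the same argument yields $N_0(\R^n\setminus\gamma)=\Int(-\gamma)$, and taking polars gives $(-\gamma)^\vee=-\gamma^\vee$, which is the desired identity. The one delicate step is controlling the $o(t_i)$ error in the first inclusion using interior-point stability of the convex cone; everything else is routine convex geometry plus a diagonal extraction, so I do not expect any serious obstacle.
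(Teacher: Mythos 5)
Your proof is correct and is essentially the argument the paper has in mind: the paper's own proof is literally ``clear from the definition of conormal cone,'' and your computation $N_0\gamma=\Int(\gamma)$, $N_0(\R^n\setminus\gamma)=-\Int(\gamma)$ followed by taking dual cones is exactly the routine verification being suppressed; moreover the $o(t_i)$ step you flag as delicate is unproblematic, since once $u_i:=(x_i-y_i)/|x_i-y_i|$ lies in $\Int(\gamma)$ one has $x_i=y_i+t_iu_i\in\gamma+\Int(\gamma)\subset\gamma$ outright, with no error term to control. The only caveat is the nonempty-interior hypothesis you invoke for $(\Int(\gamma))^\vee=\gamma^\vee$: for a cone of positive codimension (e.g.\ a ray in $\R^2$) the strict normal cone at $0$ is empty, so $N_0^*\gamma=\R^n\neq\gamma^\vee$ and the lemma as literally stated fails; this is a defect of the statement rather than of your argument, and it is harmless here because the paper only ever applies the lemma to full-dimensional cones and to complements of full-dimensional convex sets.
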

\begin{proof}
These are clear from the definition of conormal cone.
\end{proof}

We say a subset $Z$ of $\R^n$ is {\em polyhedral} if it is defined by finite linear inequalities.
\begin{lemma}\label{sumconormal}
Let $Z, W \subset \R^n$ be a polyhedral subset. Then we have
\eqn{N_x^*(Z\cap W)=N_x^*Z+N_x^*W}
for $x\in Z\cap W$.
\end{lemma}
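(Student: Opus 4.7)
The plan is to reduce the identity to Lemma~\ref{cone1} by localizing to tangent cones. For each polyhedron $P\in\{Z,W,Z\cap W\}$ with $x\in P$, let $T_xP$ denote the tangent cone of $P$ at $x$, i.e., the polyhedral cone cut out by the linear parts of those defining inequalities of $P$ that are active at $x$. Since only finitely many inequalities define $P$, on a small enough neighborhood $U$ of $x$ every inactive inequality holds strictly, so $P\cap U=(x+T_xP)\cap U$. From the resulting identity $(Z\cap W)\cap U=(x+T_xZ)\cap(x+T_xW)\cap U$ one immediately reads off $T_x(Z\cap W)=T_xZ\cap T_xW$.

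The strict normal cone is a germ invariant at $x$. Translating $x$ to the origin, one therefore has $N_xP=N_0(T_xP)$, and Lemma~\ref{cone1} gives $N_x^*P=(T_xP)^\vee$ for every $P\in\{Z,W,Z\cap W\}$. The assertion of the lemma thus reduces to the cone identity
\[
(T_xZ\cap T_xW)^\vee=(T_xZ)^\vee+(T_xW)^\vee,
\]
which is a standard duality for closed convex polyhedral cones. The inclusion $\supseteq$ is an immediate pairing computation. For $\subseteq$, the Minkowski sum of two polyhedral cones is again polyhedral, hence closed, so the bipolar theorem applies: starting from the general identity $(C_1^\vee+C_2^\vee)^\vee=C_1^{\vee\vee}\cap C_2^{\vee\vee}=C_1\cap C_2$ for closed convex cones $C_1,C_2$, taking duals once more yields the desired equality.

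The only subtle point is the localization step: one must verify that the strict normal cone really depends only on the germ of the subset at $x$, so that $Z$ may be replaced by $x+T_xZ$ without changing $N_x^*Z$. This is immediate from the sequential definition of $N_xZ$ recalled in the paper, since the sequences $\{x_i\},\{y_i\}$ live in an arbitrarily small neighborhood of $x$. Everything else is formal polyhedral cone duality and requires no further microlocal input.
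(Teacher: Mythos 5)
Your argument is correct, and it supplies substantially more than the paper does: the paper's entire proof of Lemma \ref{sumconormal} is the sentence ``This is also clear from the definition of conormal cone,'' so there is no written argument to compare against. Your route --- replace each polyhedral set near $x$ by $x$ plus its tangent cone (inactive inequalities hold strictly near $x$, and strict inequalities are never active), read off $T_x(Z\cap W)=T_xZ\cap T_xW$, use germ invariance of the strict normal cone (immediate from the sequential definition) together with Lemma \ref{cone1} to get $N_x^*P=(T_xP)^\vee$ for $P\in\{Z,W,Z\cap W\}$, and conclude by the polyhedral duality $(C_1\cap C_2)^\vee=C_1^\vee+C_2^\vee$, which needs no closure because a Minkowski sum of polyhedral cones is already closed --- cleanly isolates exactly where polyhedrality and the definition of the strict normal cone enter, which is what the paper leaves implicit.

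One caveat, which you inherit from the paper rather than introduce: Lemma \ref{cone1}(1), and hence the present lemma, is literally false without a full-dimensionality hypothesis. Take $Z=\{y\geq 0\}$ and $W=\{y\leq 0\}$ in $\R^2$ and $x=0$: the intersection is a line, whose strict normal cone contains no nonzero vector, so $N_x^*(Z\cap W)=\R^2$, whereas $N_x^*Z+N_x^*W$ is only the line spanned by $(0,1)$; correspondingly $N_0^*\gamma=\gamma^\vee$ fails for the non-full-dimensional cone $\gamma=Z\cap W$. Since you invoke Lemma \ref{cone1} exactly as stated, your proof is valid precisely in the regime where that lemma is (e.g.\ when $T_xZ\cap T_xW$ is full-dimensional), which covers every application of Lemma \ref{sumconormal} in the paper, where the sets intersected are closed complements of open sets meeting along full-dimensional regions. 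So the imprecision lies in the paper's statements, not in your added reasoning; if you wanted a self-contained argument you would only need to add the full-dimensionality hypothesis and note that $N_0\gamma=\Int(\gamma)$ for a full-dimensional closed convex cone, after which your duality computation goes through verbatim.
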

\begin{proof}
This is also clear from the definition of conormal cone.
\end{proof}

Figure \ref{exofconormal} shows some examples of Lemma \ref{cone1}.

\begin{figure}
\begin{center}
\begin{tabular}{cc}

\begin{minipage}{0.4\hsize}
\begin{tikzpicture}
\draw (1.8,0)--(0,0)--(-1.3,1.3);
\fill[gray, opacity=.2]  (1.8,0)--(0,0)--(-1.3,1.3)--(-1,-1.3)--cycle;
\draw (0,2)--(0,0)--(1.5,1.5);
\node[below] at (0,0) {$x_0$};
\node[above] at (-0.5,-0.9) {$\R^2\bs Z$};
\fill[gray, opacity=.5] (0,2)--(0,0)--(1.5,1.5)--cycle;
\fill (1.6,0) circle(1.5pt);
\draw(1.6,0)--(1.6,2);
\node[above] at (0.6, 1.0) {$N^*_{x_0}(Z)$};
\node[right] at (1.6,1) {$N^*_{x_1}(Z)$};
\node[below] at (1.6,0) {$x_1$};
\end{tikzpicture}
\end{minipage}

\begin{minipage}{0.2\hsize}
\begin{tikzpicture}
\draw (-1,0)--(0,0)--(-0.8,-0.8);
\fill[gray, opacity=.2]  (-1,0)--(0,0)--(-0.8,-0.8)--cycle;
\node[below] at (-0.9,0) {$\R^2\bs Z$};
\node[below] at (0,-0.2) {$x_2$};
\draw (1,-1)--(0,0)--(0,1.5);
\fill[gray, opacity=.5](1,-1)--(0,0)--(0,1.5)--cycle;
\node at (0.7,0) {$N_{x_2}^*(Z)$};
\end{tikzpicture}
\end{minipage}
\end{tabular}
\caption{Some examples of conormal cone\label{exofconormal}}
\end{center}
\end{figure}
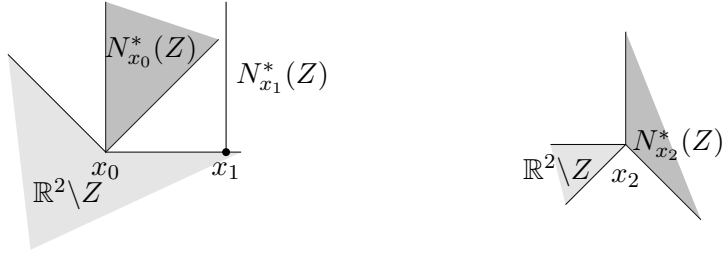

\begin{lemma}[{\cite[Corollary 5.4.9]{KS}}]\label{cone2}
We use the same notation as in the previous lemma. For $\E\in \Db{}{\Sh Y}$, we assume that $\ms(\E)\cap N_x^*Z\subset\{(x,0)\}$. Then, we have $\pa{\R\Gamma_Z(\E)}_x\simeq 0$.
\end{lemma}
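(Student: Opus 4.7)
The plan is to reduce to a local statement on $\R^n$ and then apply the non-characteristic deformation lemma (Theorem \ref{nonchara}). After translating, I may assume $Y = \R^n$ and $x = 0$, and by the polyhedrality inherited from Lemma \ref{sumconormal} I can write $Z$ near $0$ as $\bigcap_{i=1}^k \{y : \langle y, \xi_i \rangle \leq 0\}$ with the covectors $\xi_i$ generating the convex cone $N_0^* Z$. It is enough to prove that for every sufficiently small open ball $U \ni 0$, the restriction $\R\Gamma(U, \E) \to \R\Gamma(U \setminus Z, \E)$ is an isomorphism, since the colimit over shrinking $U$ will then yield $(\R\Gamma_Z \E)_0 \simeq 0$.

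To interpolate between $U \setminus Z$ and $U$, I would choose a vector $v \in \R^n$ with $\langle v, \xi_i \rangle > 0$ for every $i$, so that $v$ lies in the interior of the normal cone $N_0 Z$. For $t \in [0, T]$, define $U_t := U \setminus (Z - (T - t)v)$; for $T$ large relative to the diameter of $U$, we have $U_0 = U$ and $U_T = U \setminus Z$, and the family grows monotonically with $t$. Conditions (1) and (2) of Theorem \ref{nonchara} are straightforward, since the translates $Z - sv$ depend continuously on $s$ and $U$ is relatively compact. The main obstacle is verifying condition (3): for each $y$ on the advancing boundary of $Z - (T - t)v$, one must show $(\R\Gamma_{Z - (T - t)v} \E)_y \simeq 0$.

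For this key step I would argue by induction on the number $k$ of defining half-spaces. When $k = 1$, $Z - (T - t)v$ is a single closed half-space whose boundary at $y$ carries a conormal covector $\xi$ that is a small perturbation of $\xi_1$; the vanishing is then precisely the defining condition (Definition 2.1) for $(y, \xi) \notin \ms(\E)$, and the hypothesis $\ms(\E) \cap N_0^* Z \subset \{(0, 0)\}$ together with the closedness of $\ms(\E)$ in $T^*\R^n$ ensures this for all relevant $(y, \xi)$ once $U$ and $T$ are shrunk sufficiently. For $k > 1$, I would peel off half-spaces one at a time via $\R\Gamma_{H_1 \cap \dots \cap H_k} \simeq \R\Gamma_{H_1} \circ \R\Gamma_{H_2 \cap \dots \cap H_k}$, controlling the microsupports of the intermediate sheaves using Proposition \ref{msanother} and Lemma \ref{sumconormal} (which identifies the conormal of an intersection with the sum of individual conormals). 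Once (3) is established, Theorem \ref{nonchara} gives $\R\Gamma(U, \E) \xrightarrow{\sim} \R\Gamma(U \setminus Z, \E)$, hence $\R\Gamma_Z(U, \E) \simeq 0$, and passing to the colimit over $U$ completes the proof.
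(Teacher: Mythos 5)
First, note that the paper does not prove this lemma at all: it is quoted as \cite[Corollary 5.4.9]{KS}, so your argument has to stand on its own. Your overall strategy (replace $Z$ near $x$ by its polyhedral tangent cone, sweep it away by translating along a transverse vector $v$, apply Theorem \ref{nonchara}, and upgrade the pointwise hypothesis to a neighbourhood of $x$ using that $\ms(\E)$ is closed and conic) is the right kind of argument and is close in spirit to the paper's own Lemma \ref{edgevanishing}; the closedness/conicity step and the $k=1$ vanishing from the definition of microsupport are fine. But there is a genuine gap in the verification of condition (3) of Theorem \ref{nonchara}. With $U$ a small ball and $U_t=U\bs\pa{Z-(T-t)v}$, for intermediate times the set $\bigcap_{u>s}\Cl{U_u\bs U_s}\bs U_t$ is not contained in the advancing front inside $U$: it also contains points of $\partial U$ where the translated boundary $\partial\pa{Z-(T-s)v}$ meets $\partial U$, and these points lie in no $U_t$. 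At such a point the closed set $Y\bs U_t$ locally contains $Y\bs U$, whose conormal directions have nothing to do with $N_x^*Z$, and the hypothesis gives no control of $\ms(\E)$ there; so the required stalk vanishing cannot be checked. This is precisely why in Lemma \ref{edgevanishing} the moving piece $V_{us}$ is capped by the fixed open set $V_s\bs\gamma_t$ and the front is confined to a region compactly contained in $U$; your family needs an analogous modification (keep the family constant near $\partial U$ and only move the front across a compact part of $U$). A smaller but real slip: as defined your family is decreasing in $t$ ($U_0=U$, $U_T=U\bs Z$), whereas Theorem \ref{nonchara} needs $U_s=\bigcup_{t<s}U_t$.

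The inductive step for $k>1$ is also not substantiated. Writing $\R\Gamma_{H_1\cap\cdots\cap H_k}\simeq\R\Gamma_{H_1}\circ\R\Gamma_{H_2\cap\cdots\cap H_k}$ is fine, but to conclude vanishing you need a bound of the type $\ms\pa{\R\Gamma_{H_2\cap\cdots\cap H_k}\E}\subset\ms(\E)+\sum_i N^*H_i$; neither Proposition \ref{msanother} nor Lemma \ref{sumconormal} provides any estimate on microsupports of $\R\Gamma_H(\E)$ — that is a functorial estimate from \cite[\S 5.4]{KS} whose proof is of essentially the same depth as the statement you are proving, so as written the induction is circular. (An honest route is to stratify the front by the number of active constraints and run a further deformation at the lower-dimensional strata, which is what the paper's own deformation arguments do.) Two further points to fix: with the paper's conventions (Lemma \ref{cone1}), if $Z=\bigcap_{i}\set{y\relmid\hull{y,\xi_i}\le 0}$ then $N_0^*Z$ is generated by $-\xi_1,\dots,-\xi_k$, not by the $\xi_i$, so the cone to which the hypothesis is applied must be flipped consistently; and the paper actually invokes this lemma for non-convex closed sets such as $Y\bs U_t$ in Lemma \ref{edgevanishing}, which your reduction to a single convex polyhedral cone does not cover, whereas the cited result in \cite{KS} holds for arbitrary closed subsets.
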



\begin{lemma}\label{edgevanishing}
Let $\gamma \subset \R^n$ be an n-dimensional closed polyhedral convex cone. 
We assume that there exists a proper $n$-dimensional closed polyhedral convex cone $\delta \subset\R^n$ such that $-\delta \cap \gamma=\{0\}$. 
For $\E\in \Db{}{\Sh \R^n}$, we further assume that there exists a neighbourhood $U$ of $0$ such that $\ms(\E)\cap U\times \pa{\Relint(\gamma^\vee) +\delta^\vee} = \varnothing$ under the canonical identification $T^*\R^n\cong \R^n\times \R^n$.
Then, we have $\pa{\R\Gamma_{\gamma}(\E)}_0\simeq 0$.
\end{lemma}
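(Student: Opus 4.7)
The plan is to invoke the non-characteristic deformation lemma (Theorem~\ref{nonchara}) on an explicit family of open sets interpolating between $V\setminus\gamma$ and $V$, for $V$ a small open neighborhood of $0$. First I would apply the excision distinguished triangle $\R\Gamma_\gamma\E\to\E\to\R j_*j^{-1}\E$ for the open inclusion $j\colon\R^n\setminus\gamma\hookrightarrow\R^n$; passing to stalks at $0$ reduces the desired vanishing $(\R\Gamma_\gamma\E)_0\simeq 0$ to the statement that the restriction map $\R\Gamma(V,\E)\to\R\Gamma(V\setminus\gamma,\E)$ is an isomorphism for $V$ ranging over a cofinal family of neighborhoods of~$0$.

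For the deformation I would fix $v\in\Int(\delta)$ and set $W_s:=V\setminus(\gamma+sv)$ for $s\ge 0$. The hypothesis $-\delta\cap\gamma=\{0\}$ forces $-v\notin\gamma$, so $0\notin\gamma+sv$ for $s>0$; and since $\gamma$ and $-\delta$ are separated by a hyperplane, $\gamma+sv$ leaves any bounded $V$ for $s$ sufficiently large. Thus $W_0=V\setminus\gamma$, $W_s=V$ for $s\gg 0$, and the family interpolates as required. Conditions~(1) and~(2) of Theorem~\ref{nonchara} are then straightforward from the closedness of $\gamma$ and the boundedness of $V$.

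The core of the argument lies in condition~(3): at a frontier point $x\in V\cap\partial(\gamma+sv)$ lying in the relative interior of a translated face $\tau+sv$, I would show $(\R\Gamma_{V\cap(\gamma+tv)}\E)_x\simeq 0$ by combining Lemmas~\ref{cone2} and~\ref{sumconormal}. The conormal cone at such $x$ computes to $\tau^\perp\cap\gamma^\vee$, which is a face of $\gamma^\vee$. Here the hypothesis $\ms(\E)\cap U\times(\Relint(\gamma^\vee)+\delta^\vee)=\varnothing$ enters decisively: the $\delta^\vee$-enlargement of $\Int(\gamma^\vee)$ is the extra room one needs to engulf, for $v\in\Int(\delta)$, the boundary faces $\tau^\perp\cap\gamma^\vee$ arising along the frontier, so that these faces lie outside $\ms(\E)$ and Lemma~\ref{cone2} applies.

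The hard part will be precisely this geometric containment $\tau^\perp\cap\gamma^\vee\subset\{0\}\cup(\Int(\gamma^\vee)+\delta^\vee)$, which does not hold verbatim for an arbitrary face of $\gamma^\vee$. I expect that making it work requires either a judicious choice of $v\in\Int(\delta)$ or first replacing $\gamma+sv$ by the Minkowski perturbation $\gamma+s\delta$ (whose dual is $\gamma^\vee\cap\delta^\vee$) and passing to a limit, so that the conormal directions encountered along the frontier are pushed into the interior-plus-$\delta^\vee$ region. Once condition~(3) has been verified in this way, Theorem~\ref{nonchara} gives $\R\Gamma(V,\E)\simeq\R\Gamma(V\setminus\gamma,\E)$, and the reduction of the first paragraph yields $(\R\Gamma_\gamma\E)_0\simeq 0$.
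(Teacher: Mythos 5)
Your reduction to showing $\R\Gamma(V,\E)\xrightarrow{\ \sim\ }\R\Gamma(V\setminus\gamma,\E)$ for a cofinal family of neighbourhoods is fine, but the proof has two genuine gaps. First, the sweeping family $W_s:=V\setminus(\gamma+sv)$ does not satisfy condition (1) of Theorem \ref{nonchara} in general: $W_s\subset W_t$ for $s<t$ forces $\gamma+(t-s)v\subset\gamma$, i.e.\ $v\in\gamma$, and under the hypotheses of this lemma one cannot arrange $v\in\gamma\cap\Int(\delta)$ (the assumption $\Int(\gamma)\cap\Int(\delta)\neq\varnothing$ appears only in Lemma \ref{nonvanishing}, not here; $-\delta\cap\gamma=\{0\}$ says nothing about $\delta\cap\gamma$). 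For $v\notin\gamma$ a point of $V$ can leave and re-enter the family as $s$ grows, so monotonicity, which you dismiss as ``straightforward,'' actually fails. Second, and more seriously, the step you yourself flag as ``the hard part'' is precisely the content of the lemma and is not carried out: at frontier points of a translate of $\gamma$ the conormal directions are faces of $\gamma^\vee$, which need not lie in $\{0\}\cup\pa{\Relint(\gamma^\vee)+\delta^\vee}$, so Lemma \ref{cone2} does not apply, and your two proposed remedies are left as expectations. The suggestion $\gamma+s\delta$ does not obviously help either: its dual $\gamma^\vee\cap\delta^\vee$ is again not contained in $\Relint(\gamma^\vee)+\delta^\vee$ away from $0$, and $-\delta\cap(\gamma+s\delta)=\{0\}$ may fail, so the ``Minkowski perturbation and limit'' route is not a repair but a restatement of the difficulty.

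The paper resolves this by a different geometric device, which is the idea missing from your argument. One first fattens $\gamma$ to a family of closed polyhedral cones $\gamma_t$ with $\gamma\setminus\{0\}\subset\Int(\gamma_t)$, $\bigcap_t\gamma_t=\gamma$ and $-\delta\cap\gamma_t=\{0\}$; the point is that then $\gamma_t^\vee\setminus\{0\}\subset\Relint(\gamma^\vee)$, so boundary faces of the fattened cone are pushed into the region covered by the microsupport hypothesis. One then keeps the removed closed set equal to $\gamma_t$ (no translation of the cone at all) and instead sweeps with a growing translate of $-\Int(\delta)$: with $V_s=(\Int(-\delta)+sx)\cap U$, $x\in\Int(\delta)$, one deforms $V_s\setminus\gamma_t$ to $V_s$ through the opens $V_{us}\cup(V_s\setminus\gamma_t)$. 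The frontier conormals are then contained in $\delta^\vee+\gamma_t^\vee$ by Lemmas \ref{cone1} and \ref{sumconormal}, hence away from the zero section in $\Relint(\gamma^\vee)+\delta^\vee$, which $\ms(\E)$ avoids by hypothesis; Theorem \ref{nonchara} gives $\RGL{\gamma_t}{V_s,\E}\simeq 0$, and limits over $t$ and over $s$ yield $\pa{\R\Gamma_\gamma(\E)}_0\simeq 0$. Without this fattening-plus-$\delta$-sweep (or an equivalent mechanism making the frontier conormals land in $\Relint(\gamma^\vee)+\delta^\vee$), your outline does not constitute a proof.
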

\begin{proof}
We take a family of closed convex polyhedral cones $\{\gamma_s\}_{s\in [0,\infty)}$ having the following properties;
\begin{enumerate}[label=(\roman*)]
\item $\gamma\bs\{0\}\subset \mathrm{Int}(\gamma_s)$ for any $s\in [0,\infty)$,
\item $\gamma_s \subset \gamma_t$ for $s\geq t$,
\item $\bigcap_{s\in [0,\infty)}\gamma_s=\gamma$,
\item $-\delta\cap\gamma_t=\{0\}$.
\end{enumerate}
Take $x\in \Int(\delta)$, then $0\in (-\Int(\delta)+cx)\cap \gamma_t$ for any $c\in (0,1)$ and $t\in [0,\infty)$. The assumption (iv) tells us that $\{(-\delta+cx)\cap \gamma_t\}_{c\in (0,1)}$ forms a neighbourhood system of $0$ in $\gamma_t$. Hence we can take sufficiently small $c$ such that $(-\delta+cx)\cap \gamma_t\subset U$. In the following, we rewrite $cx$ as $x$.

We define 
\begin{equation}
V_s:=
\begin{cases}
\varnothing &\text{for $s<0$}, \\
(\Int(-\delta)+sx)\cap U &\text{for $s\in[0,1)$}.
\end{cases}
\end{equation}
Fix some $s\in[0,1]$ and $t\in[0,\infty)$ we define
\eqn{U_u:=V_{us}\cup (V_s\bs\gamma_t)}
for $u\in (-1,1)$. Here $V_{us} =\varnothing$ for $us<0$. Then we obtain a family of open subsets $\mathcal{U}:=\{U_u\}_{u\in (-1,1)}$. Since $(t-s)\cdot x \in \Int(\delta)$ for $s< t \in [0,1)$, we have $V_s\subset V_t$. Hence we have (1) of Theorem \ref{nonchara} for $\mathcal{U}$. 

We also have (2) of Theorem \ref{nonchara} since $(-\delta+x)\cap \gamma_t\subset U$. 

For any $a\leq b$, the set $\bigcap_{c> a} \Cl{U_c\bs U_a}\bs U_b$ is empty or $\partial V_{us}\cap \gamma_t$. At $x\in \partial V_{us}\cap \gamma_t$, we have
\begin{equation}\begin{split}
N_x^*(\R^n\bs U_u)&=N_x^*((\R^n\bs V_{us})\cap (\R^n\bs (V_s\bs\gamma_t)))\\
&=N_x^*(\R^n\bs V_{us})+N_x^*(\R^n\bs(V_s\bs \gamma_t))\\
&=-N_x^*(V_{us})+N_x^*(\gamma_t)\\
&\subset \delta^\vee + \gamma_t^\vee
\end{split}
\end{equation}
by Lemma \ref{cone1} and Lemma \ref{sumconormal}. By the assumption (i), $\gamma_t^\vee\bs\{0\}$ is contained in $\Relint(\gamma^\vee)$ for $t\in [0,\infty)$. Hence, we have $U\times \gamma_t^\vee\cap \ms(\E)\subset\{0\}$. Since $\partial V_s\cap \gamma_t\subset U$, $\ms(\E)\cap U\times (\delta^\vee+ \gamma_t^\vee)=\{0\}$. Then, by Lemma \ref{cone2}, $\{U_u\}_{u\in (-1,-1)}$ satisfy (3) of Theorem \ref{nonchara}. Hence we can use Theorem \ref{nonchara} and we have 
\eqn{\R\Gamma(V_s,\E)\xrightarrow{\simeq}\R\Gamma(V_s\bs\gamma_t,\E).}
In other words, we have $\RGL{\gamma_t}{V_s,\E}\simeq 0$ for any $s$ and $t$.

Taking the limit with respect to $t$, we have $\R\Gamma_{\gamma}(V_s,\E)\simeq 0$ for any $s\in [0,1)$.
Since $-\delta\cap\gamma=\{0\}$, we know that $\{V_s\cap \gamma\}_{s\in[0,1)}$ is an open neighbourhood system of $0$ in $\gamma$. Finally, we have $\pa{\R\Gamma_\gamma(\E)}_0\simeq 0$.
\end{proof}
Let $E\cong \R^n$ be a vector space  and $\gamma \subset E$ be an $n$-dimensional closed polyhedral convex cone. For a face $\tau$ of $\gamma^\vee$, we define a subset $R_\tau$ of $E$ by
\begin{equation}
R_\tau:=\bigcap_{\substack{\tau\prec\sigma\prec\gamma^\vee\\ \tau\neq \sigma}}\pa{\tau^\perp\bs\sigma^\perp}
\end{equation}
where $\tau\prec \sigma$ means $\tau$ is a face of $\sigma$.

\begin{lemma}\label{nonvanishing}
Let $\gamma\subset E\cong \R^n$ be an $n$-dimensional closed polyhedral convex cone. Let further $\delta$ be an $n$-dimensional proper polyhedral closed convex cone. For $\E\in \Db{}{\Sh E}$, we assume the following:
\begin{enumerate}[label={\upshape(\roman*)}]
\item There exists a neighbourhood $V$ of $0$ such that $\pa{V\bs\gamma}\times (\delta^\vee+\gamma^\vee)\cap \ms(\E)\subset \pa{V\bs\gamma}\times \set{0}\cup \bigsqcup_{0\neq\tau\prec\gamma^\vee}R_\tau\times\tau$. \label{281}
\item There exists $\xi_0\in \Relint(\gamma^\vee)\cap \Int(\delta^\vee)$ such that $(0,\xi_0)\in \ms(\E)$.\label{282}
In particular, $\gamma\cap(-\delta)=\set{0}$.\label{283}
\item $\pa{\gamma^\vee\bs\Relint(\gamma^\vee)}\cap \delta^\vee=\set{0}$.\label{284}
\item $\Int(\gamma)\cap \Int(\delta)\neq \varnothing$.\label{285}
\end{enumerate}
Then, for any neighbourhood $U$ of $0$, there exists $x\in U$ such that $\RGL{\gamma}{-\delta+x,\E}\not\simeq0$.
\end{lemma}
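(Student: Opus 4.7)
My plan is to combine the microsupport witness $(0,\xi_0)\in\ms(\E)$ from \ref{282}, translated into a cohomological jump via Proposition \ref{msanother}, with the microsupport restriction on $V\setminus\gamma$ from \ref{281}, which will show that the jump is trivial after removing $\gamma$. What remains must live in $\RGL{\gamma}{-\delta+x,\E}$, producing the asserted non-vanishing. The payoff from assumption \ref{282} is then that the $\gamma$-supported side on one endpoint of the sweep vanishes automatically (by a support argument) and the other endpoint equals $\RGL{\gamma}{-\delta+x,\E}$.

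\textbf{Step 1 (witness and support reduction).} Apply Proposition \ref{msanother} at $(0,\xi_0)$ with the proper cone $\delta$, which is admissible because $\xi_0\in \Int(\delta^\vee)$ by \ref{282}: for any prescribed $\epsilon>0$ and any open neighbourhood $V\subset U$ of $0$ there exists $x\in V$ with
\[
\RG{H\cap(x-\delta),\E}\longrightarrow\RG{L\cap(x-\delta),\E}
\]
not an isomorphism, where $H=\set{y\relmid\hull{y,\xi_0}\geq-\epsilon}$ and $L=\set{y\relmid\hull{y,\xi_0}=-\epsilon}$. Since $\xi_0\in \Int(\delta^\vee)$ and $\delta$ is proper, the slab $H\cap(x-\delta)$ is bounded; shrinking $V$ and $\epsilon$ we arrange it to lie in an arbitrarily small neighbourhood of $0$. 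Because $\xi_0\in\Relint(\gamma^\vee)$, every $y\in\gamma$ satisfies $\hull{y,\xi_0}\geq 0>-\epsilon$, so $\gamma\subset H$ and $\gamma\cap L=\varnothing$. As $\R\Gamma_\gamma\E$ is supported on $\gamma$, this gives $\RGL{\gamma}{L\cap(x-\delta),\E}\simeq 0$ and $\RGL{\gamma}{H\cap(x-\delta),\E}\simeq\RGL{\gamma}{-\delta+x,\E}$.

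\textbf{Step 2 (the main step).} It remains to show that the parallel map on the complements of $\gamma$ \emph{is} an isomorphism:
\[
\RG{(H\cap(x-\delta))\setminus\gamma,\E}\xrightarrow{\simeq}\RG{(L\cap(x-\delta))\setminus\gamma,\E}.
\]
Given this, the map of distinguished triangles $\RGL{\gamma}{Z,\E}\to\RG{Z,\E}\to\RG{Z\setminus\gamma,\E}$ for $Z=H\cap(x-\delta)$ and $Z=L\cap(x-\delta)$, combined with Step 1, forces $\RGL{\gamma}{-\delta+x,\E}\not\simeq 0$. I prove the displayed isomorphism by applying Theorem \ref{nonchara} in the ambient space $\R^n\setminus\gamma$ to a family sweeping $L$ upward to $H$ inside $(x-\Int(\delta))$ along the $\xi_0$-direction. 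The outward conormal of the moving face lies in $\R_{\geq 0}\xi_0\subset\Relint(\gamma^\vee)$; this direction does not lie in any proper face $\tau\prec\gamma^\vee$, so by assumption \ref{281} the intersection $\ms(\E)\cap(V\setminus\gamma)\times\R_{\geq 0}\xi_0$ is contained in the zero section. Lemma \ref{cone2} then verifies condition (3) of Theorem \ref{nonchara} at interior boundary points of the sweep, and conditions (1), (2) are immediate from the construction.

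\textbf{Main obstacle.} The delicate part of Step 2 is handling the boundary points of the sweep that lie on $\partial(x-\delta)$. There the conormal of the local complement acquires a summand in $-\delta^\vee$ via Lemma \ref{cone1} and Lemma \ref{sumconormal}, and one must check that the total conormal $\R_{\geq 0}\xi_0+(-\delta^\vee)$ still avoids $\ms(\E)$ on $V\setminus\gamma$. This is exactly where assumptions \ref{283} and \ref{284} enter: \ref{284} ensures that the only covector of $\delta^\vee$ on the boundary of $\gamma^\vee$ is zero, so no summed conormal can slip into a bad stratum $R_\tau\times\tau$, while \ref{283} gives the transversality $\gamma\cap(-\delta)=\set{0}$ that keeps the sweep genuinely inside $\R^n\setminus\gamma$; \ref{285} is used to choose $x\in\Int(\gamma)\cap\Int(\delta)$ so that $\gamma\cap(-\delta+x)$ has full-dimensional overlap and the claim is non-trivial. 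Executing this conormal bookkeeping at the cone boundary carefully is where the bulk of the technical work lies.
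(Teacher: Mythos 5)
Your overall architecture coincides with the paper's: apply Proposition \ref{msanother} at $(0,\xi_0)$ with the cone $\delta$ to get a non-isomorphism between sections over $H\cap(x-\delta)$ and over $L\cap(x-\delta)$, show that the corresponding map with $\gamma$ removed \emph{is} an isomorphism, and conclude $\RGL{\gamma}{x-\delta,\E}\not\simeq 0$ by the support argument; your Step 1 is fine, and your arrangement even makes the paper's separate check that $(x-\delta)\cap\gamma\neq\varnothing$ unnecessary. The gap is that Step 2, which is the entire content of the lemma, is not actually carried out, and the family you propose cannot satisfy the hypotheses of Theorem \ref{nonchara}. If you sweep level sets of $\hull{\cdot,\xi_0}$ inside the \emph{fixed} region $\pa{x-\Int(\delta)}\bs\gamma$ with ambient space $\R^n\bs\gamma$, condition (2) fails: in the only interesting case $\gamma\cap(x-\delta)\neq\varnothing$ the moving slab eventually hugs $\gamma$, and the closure of $U_t\bs U_s$ inside $\R^n\bs\gamma$ is then not compact. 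If instead you take the ambient space to be $\R^n$ (or $V$), the advancing front meets $\partial\gamma$, and there condition (3) demands $\pa{\R\Gamma_{\R^n\bs U_t}\E}_y\simeq 0$ at points $y\in\partial\gamma$; but $\R^n\bs U_t$ contains $\gamma$, assumption \ref{281} gives no control of $\ms(\E)$ over points of $\gamma$, and such a vanishing is essentially what the lemma is trying to refute, so it can be neither verified nor expected in the intended application.

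What the paper does to get around this, and what your sketch lacks, is a different sweep geometry: one translates the whole notched open cone $\pa{(1+\alpha)x-\Int(\delta)}\bs\gamma$, moving its basepoint from a point $y_0$ with $y_0-x\in-\Int(\gamma)$ up to $x$. Because the translation vector lies in $-\Int(\gamma)$, the moving copy of $\partial\gamma$ stays entirely in $V\bs\gamma$, where \ref{281} applies. Even there Lemma \ref{cone2} is not enough: at front points lying on the strata $R_\tau$ the microsupport may contain the face $\tau$, which meets the conormal cone $f(\delta^\vee)+\tau$ of the complement along its boundary; the paper uses \ref{281}, \ref{284} and the choice of $y_0$ to show $\ms(\E)$ avoids the \emph{relative interior} of that conormal cone and then invokes Lemma \ref{edgevanishing}, which your proposal never uses. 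Two further slips: the conormal summand coming from $\partial(x-\delta)$ is $+\delta^\vee$, not $-\delta^\vee$ (with your sign it would leave the window $\delta^\vee+\gamma^\vee$ of \ref{281} and be uncontrollable), and \ref{285} is used to place the auxiliary center $x_0\in\Int(\gamma)\cap\Int(\delta)$ of the slab, not the point $x$ produced by Proposition \ref{msanother}, over which you have no control. So the ``main obstacle'' you defer is not routine bookkeeping on top of your construction; it requires replacing the construction itself, and as written your deformation argument does not go through.
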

\begin{proof}

We note that \ref{283} and \ref{285} imply a family of closed subsets $\set{(x-\delta)\cap\gamma\relmid x\in \Int(\gamma)\cap \Int(\delta)}$ forms a neighbourhood system of $0$ in $\gamma$. Combining with $\xi_0\in \Int(\delta^\vee)$ from \ref{282}, we can take a sufficiently small $\epsilon$ and $x_0\in \Int(\gamma)\cap \Int(\delta)$ such that $\overline{W}:=\pa{x_0-\delta}\cap H\subset V\cap U$ where 
\begin{equation}
H:=\set{y\in V\relmid \hull{y, \xi_0}\geq -\epsilon}.
\end{equation}

We define an open neighbourhood $W$ of $0$ by $W:=\Int\pa{\overline{W}}$.
Proposition \ref{msanother} and the property $(0,\xi_0)\in \ms(\E)$ from \ref{282} ensures that there exists $x\in W$ such that 
\begin{equation}\label{defisom}
\RG{H\cap (x-\delta), \E}\xrightarrow{\not\simeq} \RG{L\cap (x-\delta), \E}
\end{equation}
where
\begin{align}
L:=\set{y\in V\relmid \hull{y, \xi_0}=-\epsilon}.
\end{align}
By the definition of $W$, it follows that $H\cap(x-\delta), L\cap(x-\delta)\subset V$.

Let us assume that $\pa{x-\delta}\cap \gamma$ were empty. Then take $y_0\in \Relint((x-\delta\cap\gamma)\cap L)$. From $\xi_0\in \Relint(\gamma^\vee)$ of \ref{282}, we can define a family of open subsets $\mathcal{V}:=\set{V_s}_{s\in(-1, 1+\alpha)}$ of $H$ with 
\begin{equation}
V_{s}:=
\pa{sx+\pa{1-s}y_0-\Int(\delta)}\cap H \text{ when $s\in (0,1+\alpha)$}
\end{equation}
where $\alpha>0$ is taken to be sufficiently small to satisfy $V_{1+\alpha}\subset V\bs \gamma$. The family $\mathcal{V}$ clearly satisfies (1) and (2) of Theorem \ref{nonchara}. For $s\geq t$, the subset $\bigcap_{u>s}\Cl{V_u\bs V_s}\bs V_t$ is $\varnothing$ or $\pa{sx+(1-s)y_0-\partial\delta}\cap H$.
Hence the normal cone $N_{y}^*(V_{1+\alpha}\bs V_s)$ is contained in $\xi_0+\delta^\vee=\delta^\vee$ for $y\in \bigcap_{u>s}\Cl{V_u\bs V_s}\bs V_t$. Since $V_{1+\alpha}\subset V\bs \gamma$, \ref{281} and \ref{284} tell us that $\ms(\E)\cap N_{y}^*(V_{1+\alpha}\bs V_s)=\set{y}\times\set{0}$ for $y\in \bigcap_{u>s}\Cl{V_u\bs V_s}\bs V_t$. By Lemma \ref{cone2}, $\mathcal{V}$ satisfies (3) of Theorem \ref{nonchara}. Thoerem \ref{nonchara} says that 
\begin{equation}
\RG{V_{1+\alpha},\E}\xrightarrow{\simeq}\RG{L\cap((1+\alpha)x-\alpha y_0-\delta),\E}
\end{equation}
for any sufficiently small $\alpha$. Taking $\alpha\rightarrow 0$, we have 
\begin{equation}
\RG{H\cap (x-\delta),\E} \xrightarrow{\simeq} \RG{L\cap (x-\delta),\E}.
\end{equation}
This contradicts to (\ref{defisom}), hence we have $\pa{x-\delta}\cap \gamma\neq\varnothing$.

We again define a family of open subsets $\mathcal{V}'_\alpha:=\set{V'_{s,\alpha}}_{s\in(-1,1)}$ of $H$ with
\begin{equation}
V'_{s,\alpha}:=
\pa{\pa{s-1}(1+\alpha)x+\pa{1-s}y_0+\pa{\pa{(1+\alpha)x-\Int(\delta)}\bs\gamma}}\cap H \text{ when $s\in (0,1)$}
\end{equation}
where $\alpha$ is taken to be sufficiently small to satisfy $V'_{1,\alpha}\subset V$.
It is clear that the family $\mathcal{V}'_\alpha$ satisfies (1) and (2) of Theorem \ref{nonchara}. For $s$, we define two sets:
\begin{align}
Z_1:&=\pa{\pa{s-1}(1+\alpha)x+\pa{1-s}y_0+\pa{\pa{(1+\alpha)x-\partial\delta}\bs\gamma}}\cap H. \\
Z_2:&=\pa{s-1}(1+\alpha)x+\pa{1-s}y_0+\pa{\pa{(1+\alpha)x-\delta}\cap\partial\gamma}.
\end{align}
Then, for $s\leq t$, $\bigcap_{u>s}\Cl{V'_{u,\alpha}\bs V'_{s,\alpha}}\bs V'_{t,\alpha}$ is $\varnothing$ or $Z_1\sqcup Z_2$. If $y\in Z_1$, the normal cone $N^*_y(V_{1,\alpha}'\bs V'_{s,\alpha})$ is contained in $\delta^\vee$, hence we have $\ms(\E)\cap N^*_y(V_{1,\alpha}'\bs V'_{s,\alpha})=\set{y}\times\set{0}$ by \ref{281} and \ref{284}.
On the other hand, we have a decomposition $Z_2=\bigsqcup_{\tau\precneqq\gamma^\vee}W_\tau$ where $W_\tau$ is defined by
\begin{align}
W_\tau:=\pa{s-1}(1+\alpha)x+\pa{1-s}y_0+\pa{\pa{(1+\alpha)x-\delta}\cap R_\tau\cap \gamma}.
\end{align} 
Since $y_0-x\in -\Int(\gamma)$, we have $\hull{y_0-x, \gamma^\vee\bs\set{0}}<0$. Then we have $W_\tau\cap \bigcup_{\tau\prec\sigma\prec\gamma^\vee}R_\sigma\cup\bigcup_{\sigma\prec\tau}R_\sigma=\varnothing$. Moreover, if $\sigma$ is not a face of $\tau$ and $\tau$ is not a face of $\sigma$, $\sigma\cap\tau$ is a proper face of $\sigma$ and $\tau$. For $y\in W_\tau$, we have
\begin{equation}
N_{y}^*(V_{1,\alpha}'\bs V'_{s,\alpha})= \set{y}\times \pa{f\pa{\delta^\vee}+\tau}
\end{equation}
where $f\pa{\delta^\vee}$ is one of the faces of $\delta^\vee$. By \ref{284}, we can see $\sigma\cap (\delta^\vee+\tau)$ is a proper face of $\sigma$.
Taking intersection of $N_y^*\pa{V_{1,\alpha}'\bs V'_{s,\alpha}}$ with the inclusion relation of \ref{281}, we have 
\begin{equation}
\begin{split}
\ms(\E)\cap \Relint(N_y^*\pa{V'_{1,\alpha}\bs V'_{s,\alpha}})=\varnothing.
\end{split}
\end{equation}
By Lemma \ref{edgevanishing}, the family $\mathcal{V}'_\alpha$ satisfies (3) of Theorem \ref{nonchara} and we have
\begin{equation}\label{alpha}
\RG{H\cap V_{1,\alpha}',\E}\xrightarrow{\simeq}\RG{L\cap ((1+\alpha)x-\alpha y_0-\delta),\E}.
\end{equation}
Take $\alpha\rightarrow 0$, then (\ref{alpha}) becomes
\begin{equation}
\RG{H\cap (x-\delta)\bs\gamma, \E}\xrightarrow{\simeq}\RG{L\cap (x-\delta),\E}.
\end{equation}
Combining this with (\ref{defisom}), we have
\begin{equation}
\RG{H\cap(x-\delta),\E}\xrightarrow{\not\simeq}\RG{H\cap (x-\delta)\bs\gamma,\E}.
\end{equation}
Hence we have $\RGL{\gamma}{H\cap (x-\delta),\E}\not\simeq 0$. Since $\gamma\subset H$, we conclude $\RGL{\gamma}{x-\delta,\E}\not\simeq 0$.
\end{proof}

\section{A review of NCCC}
Let $M$ be a free abelian group of rank $n$ and $N$ be the dual free abelian group of $M$. We consider a smooth complete fan $\Sigma$ defined in $
N_{\R}:=N\otimes_\mathbb{Z}\R$ and the associated toric variety $X_{\Sigma}$. For $\sigma\in \Sigma$, we write the corresponding affine toric subvariety of $\Xs$ by $U_{\sigma}$ and the open immersion by $i_{\sigma}:U_{\sigma}\hookrightarrow X_{\Sigma}$. The {\it theta quasi-coherent sheaf} associated to $\sigma\in \Sigma$ is 
\begin{equation}
\qtheta(\sigma):=\OX_{\sigma}:=i_{\sigma*}\mathcal{O}_{U_\sigma} \in \D^{\mathrm{b}}(\Qcoh \Xs)
\end{equation} 
where $\D^\ueb(\Qcoh \Xs)$ is the bounded derived category of quasi-coherent sheaves.
It is known that $\D^\ueb(\coh\Xs)\subset \hull{\qtheta(\sigma)}_{\sigma\in\Sigma}$, where $\hull{\cdot}$ denotes the generated full subcategory \cite[Proposition 2.6]{Tr}.

We define the {\it theta quasi-constructible sheaf} associated to $\sigma\in\Sigma$ as
\begin{equation}
\ctheta(\sigma):=p_{!}\Vd(\C_{\sigma^\vee})\in \D^{\ueb}_{qc}(M_\R/M)
\end{equation}
where $p\colon M_\R\rightarrow M_\R/M$ is the quotient map, $\Vd\colon \D^\ueb_{qc}(M_\R)\rightarrow \D^{\ueb}_{qc}(M_\R)^{op}$ is the
Verdier duality functor, $\mathbb{C}_{\sigma^{\vee}}$ is the zero-extension of the constant sheaf on $\sigma^{\vee}$, and $\D^\ueb_{qc}(\torus)$ is the bounded derived category of quasi-constructible sheaves of $\C$-modules. Here, quasi-constructible (weakly constructible in \cite{KS}) means that it is locally constant along some stratification but not necessarily of finite rank.

For $m\in\tau^\vee$, one can define $\theta'_m \in \Hom^0_{\Db{}{\Qcoh\Xs}}\pa{\qtheta(\sigma),\qtheta(\tau)}$ the multiplication by the character $\chi^m$;
\begin{equation}
\theta_m'\colon\qtheta(\sigma)\xrightarrow{\times\chi^m} \qtheta(\tau).
\end{equation}
This correspondence induces an isomorphism 
\begin{equation}
\Hom^i_{\D^\ueb(\Qcoh\Xs)}\pa{\qtheta(\sigma),\qtheta(\tau)}\cong 
\begin{cases}
\mathbb{C}[\tau^\vee\cap M]&\text{when $\sigma\supset\tau$ and $i=0$}, \\
0&\text{otherwise}.
\end{cases}
\end{equation}

Similarly, for $m\in\tau^\vee$, one can define $\theta_m\in \Hom^0_{\Db{c}{\torus}}\pa{\ctheta(\sigma),\ctheta(\tau)}$ as the composition 
\begin{equation}
\theta_m\colon\ctheta(\sigma)=p_!\Vd(\C_{\sigma^\vee})=p_!\Vd(\C_{\sigma^\vee+m})\xrightarrow{p_!\Vd\left(r^{\tau^\vee}_{\sigma^\vee+m}\right)}\ctheta(\tau)
\end{equation}
where $\chi^m$ is the character corresponding to $m$ and $r^{\tau^\vee}_{\sigma^\vee+m}:\C_{\tau^\vee}\ra\C_{\sigma^\vee+m}$ is the restriction map. This correspondence induces an isomorphism
\begin{equation}
\Hom^i_{\D^\ueb_c(\torus)}\pa{\ctheta(\sigma),\ctheta(\tau)}\cong\begin{cases}
\mathbb{C}[\tau^\vee\cap M]&\text{when $\sigma\supset\tau$ and $i=0$}, \\
0&\text{otherwise}.
\end{cases}
\end{equation}

The category $\Gamma\pa{\Ls}$ is a dg-category whose set of objects is $\Sigma$ and Hom-spaces are defined by
\begin{equation}
\mathrm{hom}^i_{\Gamma\pa{\Ls}}(\sigma,\tau):=\begin{cases}
\C[\tau^\vee\cap M] & \text{when $\sigma\supset \tau$ and $i=0$,}\\
0&\text{otherwise.}
\end{cases}
\end{equation}
with trivial differentials.　
We write the full sub dg-category of $\D^{dg}\pa{\Qcoh\Xs}$ (resp. $\D^{dg}_{qc}\pa{\torus,\Ls}$) 
spanned by $\set{\qtheta(\sigma)}_{\sigma\in\Sigma}$ (resp. $\set{\ctheta(\sigma)}_{\sigma\in \Sigma}$) by $\qtheta_{dg}$ (resp. $\ctheta_{dg}$). 
Then we have two quasi-equivalences of dg-categories
\begin{align}
\Gamma\pa{\Ls}\rightarrow \qtheta_{dg}, \\
\Gamma\pa{\Ls}\rightarrow\ctheta_{dg}.
\end{align}
Hence, we also have the quasi-equivalence of perfect dg-modules of dg-categories $\per_{dg}\qtheta_{dg}\simeq \per_{dg}\ctheta_{dg}$.　We write the equivalence induced on the homotpy catogories by
\begin{equation}
K_{\Sigma}\colon \hull{\qtheta}_{\sigma\in\Sigma}\xrightarrow{\cong}H^0\pa{\per_{dg}\qtheta_{dg}}\xrightarrow{\cong}H^0\pa{\per_{dg}\ctheta_{dg}}\xrightarrow{\cong}\hull{\ctheta}_{\sigma\in\Sigma}
\end{equation}
By the definition, $K_\Sigma$ sends $\qtheta(\sigma)$ to $\ctheta (\sigma)$ and $\theta'_m$ to $\theta_m$ \cite[Theorem 2.3]{Tr}.

To describe $\NC$, we identify $M_{\R}\times N_{\R}\cong T^*M_{\R}$ and define 
\begin{equation}
\Lambda_{\Sigma}:=\bigcup_{\sigma\in \Sigma}\pa{\sigma^{\perp}+M}\times (-\sigma)\subset T^{*}M_{\R}
\end{equation}
and $\overline{\Lambda_{\Sigma}}\subset M_\R/M\times N_\R \cong T^*\torus$ as the image of the ${\Lambda_\Sigma}$ under the projection $\tilde{p}:T^*M_\R\ra T^*\torus$. 
We write the full subcategory of $\D^\ueb_c(\torus)$ whose objects have microsupports in $\Ls$ by $\Db{c}{\Xs, \Ls}$. Treumann showed that the essential image of $\Db{}{\coh\Xs}$ by $K_\Sigma$ is contained in $\Db{c}{\torus, \Ls}$ \cite[Proposition 2.7]{Tr}. Hence we obtain the following functor
\begin{equation}\label{functor}
\Ks:={K_\Sigma}|_{\Db{}{\coh\Xs}}\colon\Db{}{\coh\Xs}\rightarrow\Db{c}{\torus,\Ls}.
\end{equation}
The functor $\Ks$ is fully-faithful, since it is a restriction of an equivalence.

Conjecture \ref{conjecture} is motivated by homological mirror symmetry. By the result of Nadler-Zaslow \cite{NZ} and Nadler \cite{Nad}, the derived Fukaya category $\D\mathrm{Fuk}(\torus)$ of $\torus$  and the bounded derived category of constructible sheaves $\Db{c}{\torus}$ on $\torus$ are equivalent;

\begin{theorem}[Nadler-Zaslow \cite{NZ}, Nadler \cite{Nad}]\label{hms}

For a real analytic manifold $X$, there exists an equivalence of triangulated categories
\begin{equation}
\Db{c}{X} \cong \D\mathrm{Fuk}(T^*X).
\end{equation}
\end{theorem}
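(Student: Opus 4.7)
The plan is to build an explicit functor $\mu \colon \Db{c}{X} \to \D\mathrm{Fuk}(T^*X)$ by matching a generating set of objects and their morphisms, and then verify that it is an equivalence. After fixing a real-analytic Whitney stratification $\mathcal{S}$ of $X$, the subcategory $\Db{\mathcal{S}}{X}$ of $\mathcal{S}$-constructible sheaves is generated by the \emph{standard objects} $i_{S!}\C_S$ indexed by strata $S \in \mathcal{S}$. To each such $S$ I would associate a Lagrangian brane $L_S \subset T^*X$ obtained by perturbing the conormal bundle $T^*_S X$ near infinity so that it becomes conical outside a compact set, transverse to the other chosen standards, and carries a canonical grading and spin structure.

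The heart of the argument is to establish a quasi-isomorphism
\begin{equation}
\Ext^\bullet_{\Db{c}{X}}\pa{i_{S!}\C_S,\, i_{T!}\C_T} \simeq HF^\bullet(L_S, L_T)
\end{equation}
compatibly with compositions and the $A_\infty$-structure. The left-hand side is computable by the microlocal methods of Kashiwara-Schapira, using that the microsupports of the standards lie in the conormal variety of $\mathcal{S}$. The right-hand side is computable by the Floer-Morse correspondence: for Lagrangians arising as perturbations of conormal bundles by graphs of exact one-forms adapted to $\mathcal{S}$, holomorphic strip counts reduce to Morse-theoretic gradient flow lines on $X$, following the Fukaya-Oh extension of Floer's original computation for cotangent bundles. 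With the defining functions chosen carefully, the resulting Morse complex recovers a cellular resolution of $\Ext^\bullet$.

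Once this matching is done for a fixed $\mathcal{S}$, the equivalence on all of $\Db{c}{X}$ follows by taking a colimit over refinements of stratifications. Essential surjectivity, i.e.\ the statement that every compact exact Lagrangian brane is generated by standards, is Nadler's theorem, argued by a parametrized Morse-theoretic deformation of an arbitrary brane onto a union of perturbed conormals. The main obstacle in my approach is the identification of the full $A_\infty$-structure rather than merely the cohomological composition: one needs enough transversality and compactness for moduli of holomorphic polygons with perturbed Lagrangian boundary conditions, and one must match their counts with higher Massey-type products computed from injective resolutions on the constructible side. This compatibility is where most of the analytic work in Nadler-Zaslow is concentrated, and is essentially what forces the use of the specific Liouville structure of $T^*X$.
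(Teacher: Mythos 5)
The paper does not contain a proof of this statement: Theorem \ref{hms} is quoted as background from Nadler--Zaslow \cite{NZ} and Nadler \cite{Nad}, and is used only as a black box to deduce Corollary \ref{corhms} from Theorem \ref{intromain}. So there is no internal argument of the paper to compare yours against; what you have written is an outline of the original Nadler--Zaslow/Nadler proof itself.

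Judged on those terms, your outline is broadly faithful to the actual strategy (a functor defined on standard objects, morphisms matched by reducing Floer theory to Morse theory in the spirit of Fukaya--Oh, an $A_\infty$-level comparison, and Nadler's generation result for essential surjectivity), but it is a sketch rather than a proof: every genuinely hard step --- transversality and compactness for holomorphic polygons with perturbed conormal boundary conditions, the full $A_\infty$ matching, and the generation theorem --- is deferred to precisely the results being cited, as you acknowledge. Two details are also inaccurate. Nadler--Zaslow construct the functor on standard objects $i_{U*}\C_U$ attached to open sets arising from a triangulation, with branes $L_{U,f}$ built from defining functions, rather than directly on $i_{S!}\C_S$ for the strata of an arbitrary Whitney stratification, and the passage to all of $\Db{c}{X}$ by refining stratifications must be organized so that microsupport (conical Lagrangian support) conditions are preserved. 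More importantly, the branes relevant for essential surjectivity are non-compact exact Lagrangians with prescribed conical behavior at infinity (perturbed conormals are never compact), so the claim that ``every compact exact Lagrangian brane is generated by standards'' is not the statement you need; Nadler's theorem concerns the non-compact, asymptotically conical setting.
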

\noindent Fang-Liu-Treumann-Zaslow \cite{FLTZ2} defined $D^b\mathrm{Fuk}(T^*\torus, \Ls)$ as the essential image of $D^b_c(\torus, \Ls)$ under Nadler-Zaslow's equivalence. Combining Theorem \ref{hms} with Conjecture \ref{conjecture}, we have a version of homological mirror symmetry for toric varieties.

There are some results on Conjecture $\ref{conjecture}$. We say a smooth complete fan $\Sigma$ is a zonotopal unimodular fan if $\Sigma$ is obtained from a hyperplane arrangement and any linearly independent subset of the set of ray generators of $\Sigma$ can be extended to $\Z$-basis of $N$. We say a smooth complete fan $\Sigma$ is cragged when the following two conditions are satisfied:
\begin{enumerate}
\item For any subset $S$ of $\Sigma$, the cone hull of $S$ is a union of a subset of $\Sigma$.
\item For any linearly independent subset $B$ of the set of ray generators $R$, the lattice generated by $\mathrm{Cone}(B)\cap R$ has $B$ as a $\Z$-basis.
\end{enumerate}

\begin{theorem}[Scherotzke-Sibilla \cite{SS}, Treumann \cite{Tr}]\label{known}
Let $\Sigma$ is a smooth complete fan.
\begin{enumerate}
\item Conjecture \ref{conjecture} holds when $\Sigma$ is zonotopal unimodular \cite[Corollary 4.5]{Tr}. 
\item Conjecture \ref{conjecture} holds when $\Sigma$ is cragged \cite[Theorem 6.11]{SS}.
\end{enumerate}
\end{theorem}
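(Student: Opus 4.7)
The plan is to handle both parts by establishing essential surjectivity of $\Ks$, since the dg-category comparison reviewed above already yields full faithfulness. Concretely, the task reduces to showing that every object of $\Db{c}{\torus,\Ls}$ lies in the triangulated subcategory generated by the theta sheaves $\ctheta(\sigma)$ for $\sigma\in\Sigma$. Each combinatorial hypothesis—zonotopal unimodular for (1) and cragged for (2)—is tailor-made to make this generation argument run, so my approach is uniform in flavor but uses the hypotheses at different points.

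For part (1), the rays of $\Sigma$ come from a hyperplane arrangement and any linearly independent subset of them is a $\Z$-basis of the lattice it spans, so no torsion obstructs the comparison between the equivariant and nonequivariant pictures. I would build a \v{C}ech-type cover on the constructible side mirroring the affine toric cover $\{U_\sigma\}$ of $\Xs$ on the coherent side, and under the dg-equivalence recalled in Section 3 match it with the standard resolution of $\OX_{\Xs}$ by $\bigoplus_\sigma \OX_\sigma$. One then argues that any object of $\Db{c}{\torus,\Ls}$ can be recovered by iterated cone-taking from the $\ctheta(\sigma)$ using this cover, essentially descending a known equivariant generation result along the unimodular quotient $M_\R \to \torus$.

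For part (2), the cragged hypothesis supplies two pieces of structure: cone hulls of subsets of $\Sigma$ remain in $\Sigma$, and independent ray subsets form $\Z$-bases of the lattices they generate. I would induct on the dimension of the smallest cone whose image in $T^*\torus$ contains the microsupport of a given object, using the first condition to guarantee that the peeling-off procedure stays within $\Sigma$ and the second to ensure that local models match genuine theta sheaves rather than torsion quotients. The main obstacle, and the place where I expect the real work to lie, is exactly this essential surjectivity step: ruling out exotic constructible sheaves with microsupport in $\Ls$ that do not decompose into the $\ctheta(\sigma)$ requires careful combinatorial bookkeeping. In the cragged case this bookkeeping seems to need both parts of the hypothesis simultaneously, in order to avoid spurious $\Ext$-classes arising at the gluing of local models across intermediate-dimensional faces, and I expect that controlling this uniformly across the whole fan is the delicate point.
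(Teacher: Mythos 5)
This statement is not proved in the paper at all: it is a summary of results established elsewhere, quoted with precise references (\cite[Corollary 4.5]{Tr} for the zonotopal unimodular case and \cite[Theorem 6.11]{SS} for the cragged case), and the paper only uses it as an input (ultimately just the $\mathbb{P}^1\times\mathbb{P}^1$ case) in the proof of Theorem \ref{intromain}. So the relevant question is whether your sketch would actually constitute a proof of these external results, and it does not.

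The genuine gap is the one you yourself flag: essential surjectivity of $\Ks$, i.e.\ the statement that every object of $\Db{c}{\torus,\Ls}$ lies in the triangulated hull of the theta sheaves $\ctheta(\sigma)$, is exactly the hard content of both cited theorems, and your proposal leaves it as a plan rather than an argument. In particular, the suggestion that generation can be obtained by ``descending a known equivariant generation result along $M_\R\to\torus$'' does not work as stated: the equivariant CCC of \cite{FLTZ} controls only objects coming from the equivariant picture, whereas $\Db{c}{\torus,\Ls}$ is cut out purely by a microsupport condition and may a priori contain objects with no equivariant origin; excluding such ``exotic'' objects is precisely what requires the zonotopal unimodular or cragged hypotheses, and it is done in \cite{Tr} and \cite{SS} by substantial, case-specific microlocal and combinatorial arguments (not by a formal \v{C}ech comparison of covers, which by itself only reproves full faithfulness on the generated subcategory). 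Likewise, for the cragged case your proposed induction on the dimension of a cone containing the microsupport is only a heuristic: you do not specify the peeling-off step, why the local models are themselves in the hull of the $\ctheta(\sigma)$, or how the gluing across intermediate faces is controlled — the points you acknowledge as ``the delicate point.'' As written, the proposal is an outline of intent with the decisive steps missing, so it cannot replace the citations; if your goal was only to justify the use of this theorem in the present paper, the correct move is simply to cite \cite{Tr} and \cite{SS}, as the author does.
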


Both $\Db{}{\coh\Xs)}$ and $\Db{c}{\torus,\Ls}$ carry monoidal stuctures as follows;
\begin{enumerate}
\item $\otimes:=\otimes^{\mathbb{L}}$; derived tensor product in $\Db{}{\coh\Xs)}$,
\item $\star:=m_!\circ\boxtimes^{\mathbb{L}}$; the composition of 
\begin{align*}
\Db{c}{\torus,\Ls}\times\Db{c}{\torus,\Ls}&\\
\xrightarrow{\boxtimes^{\dL}}&\Db{c}{\torus\times \torus,\Ls\times\Ls}\\
\xrightarrow{m_!}& \Db{c}{\torus,\Ls}
\end{align*}
where $\boxtimes^{\dL}$ is the exterior tensor product and $m$ is the multiplication map with respect to the group structure of $\torus$.
\end{enumerate}

Some general properties of
$\kappa_{\Sigma}$ are known.
\begin{theorem}[Fang-Liu-Treumann-Zaslow \cite{FLTZ}, Treumann \cite{Tr}]\label{kappa}
Let $\Sigma$ be a smooth complete fan and $\HSigma$ be a smooth complete subdivision of $\Sigma$. Let further $\pi^{*}: \Db{}{\coh\Xs}\rightarrow\Db{}{\coh\Xss}$ be the pull-back along $\pi:\Xss\ra \Xs$ which is the morphism associated to the subdivision. Then we have the following:
\begin{enumerate}[label={\upshape(\arabic*)}]
\item There exists a natural equivalence $\Kss\circ \pi^*\cong \iota\circ \Ks$ where $\iota$ is the inclusion functor $\Db{c}{\torus, \Ls}\rightarrow \Db{c}{\torus, \Lss}$ implied by $\Ls\subset\Lss$.
\item The functor $\Ks$ is monoidal with respect to the monoidal structures $\otimes$ and $\star$.
\item There exists a natural equivalence, $\Vd\circ \Ks\cong\alpha^*\circ\Ks\circ D$ where $D:=\R\mathcal{H}om(-, \OX_{\Xs})$ and $\alpha: \torus \ra \torus$ is the inversion map $x\mapsto -x$.
\end{enumerate}
\end{theorem}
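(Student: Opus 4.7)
The plan is to verify each of (1), (2), (3) on the generating family $\set{\qtheta(\sigma)}_{\sigma\in\Sigma}$ and extend by naturality, exploiting the fact that $\Ks$ is defined via Treumann's quasi-equivalence $\per_{dg}\qtheta_{dg}\simeq\per_{dg}\ctheta_{dg}$, built canonically from the dg-category $\Gamma\pa{\Ls}$.

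For part (1), since $\HSigma$ refines $\Sigma$, for every $\sigma\in\Sigma$ one has $\pi^{-1}(U_\sigma)=\bigcup_{\hat\sigma\subset\sigma,\hat\sigma\in\HSigma}U_{\hat\sigma}$, and the associated \v{C}ech complex expresses $\pi^*\qtheta(\sigma)$ as an iterated cone of theta sheaves $\qtheta(\hat\tau)$ for intersections $\hat\tau$ of the $\hat\sigma$. Dually, $\sigma^\vee\subset M_\R$ is subdivided by the duals of the $\hat\sigma$, and $\C_{\sigma^\vee}$ admits a parallel resolution by translates of the $\C_{\hat\sigma^\vee}$ that encodes $\iota(\ctheta(\sigma))$ in the same diagrammatic shape. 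Because $\Kss$ is built from $\Gamma\pa{\Lss}$ and sends $\qtheta(\hat\tau)\mapsto\ctheta(\hat\tau)$ and $\theta'_m\mapsto\theta_m$, these two resolutions are identified term by term, yielding $\Kss\pa{\pi^*\qtheta(\sigma)}\cong\iota\Ks(\qtheta(\sigma))$; naturality on morphisms is automatic from the dg-construction.

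Part (2) also reduces to generators, since $\otimes$ and $\star$ are triangulated bifunctors. On the coherent side $\qtheta(\sigma)\otimes^{\mathbb{L}}\qtheta(\tau)\cong\qtheta(\sigma\cap\tau)$, because $U_\sigma\cap U_\tau=U_{\sigma\cap\tau}$ for cones of a fan. On the constructible side, base change for $m$ expresses $\ctheta(\sigma)\star\ctheta(\tau)$ as $p_!$ of the convolution of $\Vd(\C_{\sigma^\vee})$ and $\Vd(\C_{\tau^\vee})$ on $M_\R$, which by the Minkowski-sum identity $\sigma^\vee+\tau^\vee=(\sigma\cap\tau)^\vee$ (valid for cones of a fan) equals $\Vd(\C_{(\sigma\cap\tau)^\vee})$; hence $\ctheta(\sigma)\star\ctheta(\tau)\cong\ctheta(\sigma\cap\tau)$, matching the coherent side.

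For part (3) the key computation is $\Vd\ctheta(\sigma)=\Vd p_!\Vd\C_{\sigma^\vee}=p_*\C_{\sigma^\vee}$, using that Verdier duality is involutive on constructible sheaves and interchanges $p_!$ with $p_*$. One then identifies $p_*\C_{\sigma^\vee}$ with $\alpha^*\Ks D(\qtheta(\sigma))$ by a local computation on the affine chart $U_\sigma$: the Grothendieck dual of $\qtheta(\sigma)$ produces the character lattice $-\sigma^\vee\cap M$, and the sign change $m\mapsto -m$ is exactly what the inversion $\alpha(x)=-x$ on $\torus$ implements under $\Ks$. The principal obstacle throughout is the bookkeeping of signs, shifts, and lattice translations so that the identifications hold coherently in the dg-category $\Gamma\pa{\Ls}$ rather than merely on isomorphism classes; once this is done for each theta sheaf, all three statements extend formally via the universal property of the dg-enhancement.
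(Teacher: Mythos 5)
The paper does not prove Theorem \ref{kappa} at all: it is imported as background and attributed to \cite{FLTZ} and \cite{Tr}, so there is no internal argument to compare yours against. Judged on its own, your sketch reproduces the standard object-level computations (the \v{C}ech expression of $\pi^*\qtheta(\sigma)$ by theta sheaves of $\HSigma$, the identities $\qtheta(\sigma)\otimes^{\dL}\qtheta(\tau)\cong\qtheta(\sigma\cap\tau)$ and $\ctheta(\sigma)\star\ctheta(\tau)\cong\ctheta(\sigma\cap\tau)$ via $(\sigma\cap\tau)^\vee=\sigma^\vee+\tau^\vee$, and $\Vd\ctheta(\sigma)\simeq p_*\C_{\sigma^\vee}$), but it elides exactly the points where the cited references do real work. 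First, naturality and monoidality are not properties one can verify object-by-object on generators: the claim that ``naturality on morphisms is automatic from the dg-construction'' is unsubstantiated, since there is no naive dg-functor $\Gamma\pa{\Ls}\to\Gamma\pa{\Lss}$ (cones of $\Sigma$ need not be cones of $\HSigma$, so $\pi^*\qtheta(\sigma)$ is only a twisted complex of $\qtheta(\hat\tau)$'s, and compatibility with all the structure maps $\theta'_m$ and their coherences must be checked); likewise part (2) asserts a monoidal structure on $\Ks$, which is extra data beyond the object-wise isomorphisms you compute. Moreover, the statement that the parallel complex of sheaves $\C_{\hat\tau^\vee}$ ``encodes $\iota\ctheta(\sigma)$ in the same diagrammatic shape'' is precisely the acyclicity of a dual-cone \v{C}ech complex, which you assert rather than prove; note that the paper itself has to establish exactly this kind of resolution by hand in the one special case it needs (Lemma \ref{image}), so it cannot be waved through in general.

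Second, the strategy genuinely breaks for part (3). The generators $\qtheta(\sigma)$ are not coherent and not dualizable: $\R\mathcal{H}om\pa{i_{\sigma*}\OX_{U_\sigma},\OX_{\Xs}}$ is a limit-type object (already for $j_*\OX_{\mathbb{A}^1}$ on $\mathbb{P}^1$ it is an inverse limit of line bundles, not a theta sheaf), it need not lie in the hull $\hull{\qtheta(\sigma)}_{\sigma\in\Sigma}$ on which $K_\Sigma$ is defined, so the expression $\Ks D\pa{\qtheta(\sigma)}$ is not computed by the generator assignment, and your claim that the Grothendieck dual ``produces the character lattice $-\sigma^\vee\cap M$'' is unjustified. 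Since (3) is a statement about $\Db{}{\coh\Xs}$, a correct argument has to construct the natural equivalence $\Vd\circ\Ks\cong\alpha^*\circ\Ks\circ D$ on coherent (perfect) objects, e.g.\ via resolutions by line bundles or a monoidal/internal-Hom characterization of the duality, none of which appears in your sketch. So while the computational core of (1) and (2) is on the right track, the proposal as written has genuine gaps at the naturality/coherence level and an incorrect (or at least unsupported) key step in (3); for the purposes of this paper the theorem should simply be quoted from \cite{FLTZ} and \cite{Tr}.
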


\section{NCCC and Blow-up formula}
Let $M$ be a free abelian group with $n:=\rank M\geq 2$, $N$ be the dual of $M$, and $\Sigma$ be a smooth complete fan defined in $N_\R$. 
We write a toric blow-up of $\Xs$ centered at a torus-fixed point by $\pi:\Xss\rightarrow \Xs$, the exceptional divisor by $j:E\rightarrow\Xs$, and the
corresponding ray by $\rho_E\in \hat{\Sigma}$. We write the unique cone which corresponds to the affine toric subvariety containing the blow-up point by $\sigma_c \in \Sigma$. 
The cone $\sigma_c$ is $n$-dimensional and we write edges of $\sigma_c$ by $\rho_1,..., \rho_n$ and the ray generator of $\rho_i$ by $e_i$. Then, we have $e_E:=\sum_{i=1}^ne_i$ for the ray generator of $\rho_E$. Since $\sigma$ is smooth, $\{e_i\}_{i=1}^n$ forms a basis of $N$. The dual basis of $M$ is denoted by $\{e_i^\vee\}_{i=1}^n$.

We define a locally closed subset $Z\subset M_\R$ by 
\begin{equation}\label{Z}
Z:=\set{m\in M_\R\relmid \hull{m, e_E} \geq-1}\cap\set{m\in M_\R\relmid \hull{m, e_i}<0\hspace{2mm}\text{for any $i$}}.
\end{equation}

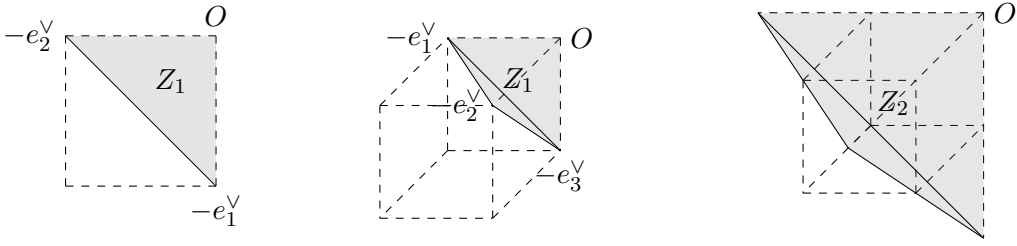
\begin{figure} 
\begin{center}
\begin{tabular}{ccc}
\begin{minipage}{0.3\hsize}
\begin{tikzpicture}
\draw (2,0) -- (0,2);
\draw[dashed] (0,2) -- (2,2);
\draw[dashed] (2,0) -- (2,2);
\draw[dashed] (0,0) -- (0,2);
\draw[dashed] (0,0) -- (2,0);
\fill[gray, opacity=.2] (0,2) -- (2,0) -- (2,2) --cycle;
\node at (1.4,1.4) {$Z_1$};
\node[above] at (2,2) {$O$};
\node[below] at (2,0) {$-e_1^\vee$};
\node[left] at (0,2) {$-e_2^\vee$};
\end{tikzpicture}
\end{minipage}

\begin{minipage}{0.3\hsize}
\begin{tikzpicture}[x=0.5cm,y=0.5cm,z=0.3cm, domain=-6:0]
\draw[dashed] (0,0,0)--(-3,0,0);
\draw[dashed] (0,0,0)--(0,-3,0);
\draw[dashed] (0,0,0)--(0,0,-3);
\draw[dashed] (-3,-3,0)--(-3,0,0);
\draw[dashed] (-3,-3,0)--(0,-3,0);
\draw[dashed] (0,-3,-3)--(0,0,-3);
\draw[dashed] (-3,0,-3)--(-3,0,0);
\draw[dashed] (0,-3,-3)--(0,-3,0);
\draw[dashed] (-3,0,-3)--(0,0,-3);
\draw[dashed] (-3,0,-3)--(-3,-3,-3);
\draw[dashed] (-3,-3,0)--(-3,-3,-3);
\draw[dashed] (-3,-3,-3)--(0,-3,-3);
\fill[gray, opacity=.1](0,0,0)--(-3,0,0)--(0,-3,0)--cycle;
\fill[gray, opacity=.1](0,0,0)--(0,-3,0)--(0,0,-3)--cycle;
\fill[gray, opacity=.1](0,0,0)--(-3,0,0)--(0,0,-3)--cycle;
\fill[gray, opacity=.1](-3,0,0)--(0,-3,0)--(0,0,-3)--cycle;
\node[right] at (0,0,0) {$O$};
\node[left] at (-3,0,0) {$-e_1^\vee$};
\node[below] at (0,-3,0) {$-e_3^\vee$};
\node[left] at (0,0,-3) {$-e_2^\vee$};
\node at (-0.7,-0.7,-0.7) {$Z_1$};
\draw (-3,0,0)--(0,-3,0);
\draw (-3,0,0)--(0,0,-3);
\draw (0,0,-3)--(0,-3,0);
\end{tikzpicture}
\end{minipage}
\begin{minipage}{0.3\hsize}
\begin{tikzpicture}[x=0.5cm,y=0.5cm,z=0.3cm, domain=-6:0]
\draw[dashed] (0,0,0)--(-6,0,0);
\draw[dashed] (0,0,0)--(0,-6,0);
\draw[dashed] (0,0,0)--(0,0,-6);
\draw[dashed] (-3,-3,0)--(-3,0,0);
\draw[dashed] (-3,-3,0)--(0,-3,0);
\draw[dashed] (0,-3,-3)--(0,0,-3);
\draw[dashed] (-3,0,-3)--(-3,0,0);
\draw[dashed] (0,-3,-3)--(0,-3,0);
\draw[dashed] (-3,0,-3)--(0,0,-3);
\draw[dashed] (-3,0,-3)--(-3,-3,-3);
\draw[dashed] (-3,-3,0)--(-3,-3,-3);
\draw[dashed] (-3,-3,-3)--(0,-3,-3);
\fill[gray, opacity=.1](0,0,0)--(-6,0,0)--(0,-6,0)--cycle;
\fill[gray, opacity=.1](0,0,0)--(0,-6,0)--(0,0,-6)--cycle;
\fill[gray, opacity=.1](0,0,0)--(-6,0,0)--(0,0,-6)--cycle;
\fill[gray, opacity=.1](-6,0,0)--(0,-6,0)--(0,0,-6)--cycle;
\node[right] at (0,0,0) {$O$};

\node at (-1.5,-1.5,-1.5) {$Z_2$};
\draw (-6,0,0)--(0,-6,0);
\draw (-6,0,0)--(0,0,-6);
\draw (0,0,-6)--(0,-6,0);
\end{tikzpicture}
\end{minipage}
\end{tabular}
\caption{$Z_1$ in $n=2$, $Z_1$ and $Z_2$ in $n=3$\label{Z3}}
\end{center}
\end{figure}
\noindent
Also recall that the functor
\begin{equation}
\Kss: \Db{}{\coh \Xs}\rightarrow \Db{c}{\torus, \Lss}.
\end{equation}
To prove Theorem \ref{sod}, we prepare the following lemma.

\begin{lemma}\label{image} For $k\geq 1$,
\begin{equation}
\Kss\pa{\OX_E(kE)} \simeq \Vd(p_*\C_{Z_k})[-n]
\end{equation}
where $Z_k:=k\cdot Z$ and $p\colon M_\R\rightarrow \torus$ is the quotient map.
\end{lemma}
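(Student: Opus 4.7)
The plan is to construct an explicit resolution of $\OX_E(kE)$ by theta sheaves $\qtheta(\sigma)$, apply $\Kss$ termwise, and match the resulting complex against a direct computation of $\Vd(p_*\C_{Z_k})[-n]$.

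First, I would represent $\OX_E(kE)$ via the Koszul short exact sequence
\begin{equation*}
0 \to \OX_{\Xss}((k-1)E) \to \OX_{\Xss}(kE) \to \OX_E(kE) \to 0,
\end{equation*}
so that $\OX_E(kE)$ appears as the cone of the inclusion. Each line bundle $\OX_{\Xss}(jE)$ admits a \v{C}ech resolution by theta sheaves along the affine toric cover $\{U_\sigma\}$ indexed by maximal cones $\sigma \in \HSigma$: on each $U_\sigma$ the restriction is a character-twist of the structure sheaf $\OX_{U_\sigma}$, determined by $j$ and by whether $\rho_E$ is an edge of $\sigma$. Assembling these gives an explicit resolution of $\OX_E(kE)$ by theta sheaves with character-shift morphisms $\theta'_m$.

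Applying $\Kss$ then sends each $\qtheta(\sigma)$ to $\ctheta(\sigma) = p_!\Vd(\C_{\sigma^\vee})$ and each $\theta'_m$ to $\theta_m$, producing an explicit complex of constructible theta sheaves computing $\Kss(\OX_E(kE))$. On the other side, $Z_k$ is the simplex with vertices $0, -ke_1^\vee, \ldots, -ke_n^\vee$, closed along the face $\{\langle m, e_E \rangle = -k\}$ and open along the $n$ faces through $0$. Computing $\Vd(\C_{Z_k})$ directly using the standard description of Verdier duals of characteristic functions on locally closed polyhedral regions and then applying $p_*$ yields another explicit complex of constructible sheaves on $\torus$, which one matches with the one above, up to the shift $[-n]$ arising from the Verdier-duality normalization on the $n$-manifold $M_\R$.

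The main obstacle is the matching step: one must carefully track character twists, signs, and the combinatorics of open versus closed faces of $Z_k$ so that the \v{C}ech-resolution output on the coherent side aligns with the Verdier-dual output on the constructible side.
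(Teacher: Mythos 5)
Your route differs from the paper's in an essential way: the paper carries out the \v{C}ech/Koszul computation only for $k=1$, starting from $0\to\OX\to\OX(E)\to\OX_E(E)\to 0$, and then obtains all $k\geq 2$ formally from Theorem \ref{kappa} (2) and (3), i.e.\ from the fact that $\Kss$ intertwines $\otimes$ with the convolution $\star$ and is compatible with duality, so that the general case is expressed through convolution powers of the $k=1$ answer. You instead propose to run the termwise computation for every $k$ directly from $0\to\OX((k-1)E)\to\OX(kE)\to\OX_E(kE)\to 0$ and then ``match'' the output against $\Vd\pa{p_*\C_{Z_k}}[-n]$, treating the matching as bookkeeping of signs, twists and open/closed faces.

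That deferred matching step is where the proposal breaks down for $k\geq 2$, and it is not a bookkeeping issue. On a chart $U_\sigma$ with $\rho_E\subset\sigma$, the cokernel of $\OX((k-1)E)\to\OX(kE)$ contributes the region $\pa{\sigma^\vee-ke_\sigma^\vee}\bs\pa{\sigma^\vee-(k-1)e_\sigma^\vee}=\set{\hull{m,e_i}\geq 0 \text{ for } \rho_i\subset\sigma,\ i\neq E}\cap\set{-k\leq\hull{m,e_E}<-(k-1)}$, and the stalkwise \v{C}ech analysis (at each $m$ one gets the augmented cochain complex of the simplex on $\set{i:\hull{m,e_i}\geq 0}$) glues these pieces to the constant sheaf on the band $W_k:=\set{\hull{m,e_i}<0\ \forall i,\ -k\leq\hull{m,e_E}<-(k-1)}=Z_k\bs Z_{k-1}$, not on the dilated simplex $Z_k$. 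For $k=1$ one has $W_1=Z_1$, which is exactly why the paper only runs this computation at $k=1$; for $k\geq 2$ the two candidates are genuinely non-isomorphic after applying $p_*$ and $\Vd$. Concretely, restriction to the closed subset $Z_1\subset Z_k$ gives a nonzero map $p_*\C_{Z_k}\to p_*\C_{Z_1}$, hence a nonzero degree-zero morphism from $\Vd\pa{p_*\C_{Z_1}}[-n]$ to $\Vd\pa{p_*\C_{Z_k}}[-n]$, while on the coherent side $\Hom^0\pa{\OX_E(E),\OX_E(kE)}=H^0\pa{\mathbb{P}^{n-1},\OX_{\mathbb{P}^{n-1}}(1-k)}=0$; since $\Kss$ is fully faithful, the object your computation produces cannot be $\Vd\pa{p_*\C_{Z_k}}[-n]$ on the nose, and no tracking of signs or face structures will change that. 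To complete a proof in the form the statement is used, you need the ingredient your proposal omits entirely --- the monoidality and duality properties of $\Kss$ (Theorem \ref{kappa} (2),(3)) used by the paper to pass from $k=1$ to general $k$ --- or at least an argument relating $\C_{Z_k}$ and $\C_{Z_k\bs Z_{k-1}}$ via the triangle $\C_{Z_k\bs Z_{k-1}}\to\C_{Z_k}\to\C_{Z_{k-1}}$, which changes the answer precisely by the lower objects $\Kss\pa{\OX_E(jE)}$, $j<k$. (For the surface case to which this paper ultimately applies the lemma, only $k=1\leq n-1$ occurs, and there your computation coincides with the paper's.)
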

Some examples of the subsets $Z_k$ are depicted in Figure \ref{Z3}.

\begin{proof}
To calculate $\kappa_{\hat{\Sigma}}(\mathcal{O}_E(kE))$, we first consider the following resolution;
\begin{equation}\label{exact}
0\rightarrow \mathcal{O}\rightarrow \mathcal{O}(E)\rightarrow \mathcal{O}_E(E)\rightarrow 0
\end{equation}
where we write $\mathcal{O}_{\Xss}$ by $\OX$.

Next, we take $\mathrm{\check{C}}$ech resolutions：
\begin{equation}
\xymatrix{ 
0\ar[r] &\OX \ar[r]^\epsilon\ar[d] &\OX(E) \ar[r]\ar[d]^{r_n} &\OX_E(E)  \ar[r]\ar[d]^{[r_n]}&0\\
0\ar[r] &\displaystyle\bigoplus_{\sigma\in \HSigma(n)}\OX_{U_\sigma}\ar[r]^{\epsilon_n}\ar[d]&\displaystyle\bigoplus_{\sigma\in \HSigma(n)}\OX_{U_\sigma}(E)\ar[r]\ar[d]^{r_{n-1}}&\coker(\epsilon_n)\ar[r]\ar[d]^{[r_{n-1}]} &0\\
0\ar[r] &\displaystyle\bigoplus_{\sigma\in \HSigma(n-1)}\OX_{U_\sigma}\ar[r]^{\epsilon_{n-1}}\ar@{->}[d] &\displaystyle\bigoplus_{\sigma\in \HSigma(n-1)}\OX_{U_\sigma}(E)\ar[r]\ar[d] &\coker(\epsilon_{n-1})\ar[r]\ar[d] &0\\
&\vdots&\vdots&\vdots}
\end{equation}
\noindent Hereafter, we will ignore cones in $\HSigma$ which do not contain $\rho_E$, since they are irrelevant for the calculation of $\OX_E(E)$ in the above diagram.

By definition, $\OX_{U_\sigma}=\qtheta(\sigma)$. On each affine toric variety $U_\sigma$, we can take some $e_j$ such that $\chi^{e_j^\vee}\cdot\OX_{U_\sigma}(E)=\OX_{U_\sigma}$ as a subsheaf of the constant sheaf of the  function field. We fix such
$j$ and write $e_{\sigma}^\vee:= e_j^\vee$. 
The exact triangle,
\begin{equation}
\xymatrix{
\qtheta(\sigma)\ar[r]^{\chi^{e_\sigma^\vee}}&\qtheta(\sigma)\ar[r]&\coker(\chi^{e_\sigma^\vee})\ar[r]^{\hspace{2mm}[1]}&\qtheta(\sigma)[1]
}
\end{equation}
in $\D^\ueb_{qc}(\torus)$ is sent to
\begin{equation}\label{cone}
\xymatrix{
\ctheta(\sigma)\ar[r]^{\Kss\pa{\chi^{e_\sigma^\vee}}}&\ctheta(\sigma)\ar[r]&\Kss\pa{\coker\pa{\chi^{e_\sigma^\vee}}}\ar[r]^{\hspace{6mm}[1]}&\ctheta(\sigma)[1]}
\end{equation}
by $\Kss$. 

On the other hand, there exists the following exact triangle in $\Db{qc}{\torus}$
\begin{equation}
\xymatrix{
\C_{\sigma^\vee-e_\sigma^\vee}\ar[r]&\C_{\sigma^\vee}
\ar[r]^{\hspace{-3mm}[1]}&\C_{Z_{\sigma}}[1]\ar[r]&\C_{\sigma^\vee}[1]
}
\end{equation}
where the left arrow is the restriction map and $Z_\sigma:=\pa{\sigma^\vee-e_\sigma^\vee}\bs\pa{\sigma^\vee}$. Note that $Z_\sigma$ does not depend on the choice of $e_\sigma^\vee$. Applying $p_!\circ \Vd$ to this triangle, we have 
\begin{equation}\label{above}
\xymatrix{
\ctheta(\sigma)\ar[r]^{\Kss\pa{\chi^{e_\sigma^\vee}}}&\ctheta(\sigma)\ar[r]&p_!\Vd\pa{\C_{Z_\sigma}}\ar[r]^{\hspace{6mm}[1]}&\ctheta(\sigma)[1]}.
\end{equation}
\noindent Comparing (\ref{above}) with (\ref{cone}), we have $\Kss\pa{\coker\pa{\chi^{e_\sigma^\vee}}}\cong p_!\Vd\pa{\C_{Z_\sigma}}$.

We next calculate $\Kss\pa{[r_i]}$. For $\sigma \in \Sigma(i+1)$ and $\tau\in \Sigma(i)$, the restriction map $\OX_\sigma(E)\ra\OX_\tau(E)$ is translated into $\qtheta(\sigma)\xrightarrow{\chi^{e_{\tau}^{\vee}-e_{\sigma}^{\vee}}}\qtheta(\tau)$. 
We define the sheaf $\hat{\C}_{\sigma^\vee}:=\C_{\sigma^\vee-e_\sigma^\vee}$ on $M_\R$, 
then the restriction map $\hat{\C}_{\sigma^\vee}\ra\hat{\C}_{\tau^\vee}$ is mapped to $\Kss\pa{\chi^{e_\tau^\vee-e_\sigma^\vee}}$ by applying $p_!\Vd$. 
On the other hand, we can see from the definition of $\Ks$ that the morphism induced on $\coker(\epsilon_i)\ra\coker(\epsilon_{i-1})$ is mapped to the composition of the restriction map $\C_{Z_\sigma}\ra \C_{Z_\tau}$ and $p_!\Vd$. 
By summing up the restriction maps $\hat{\C}_{\sigma^\vee}\ra\hat{\C}_{\tau^\vee}$ with $\Cech$ signs and applying $p_!\Vd$, we have $\Kss(r_i)$. Hence, we conclude that $\Kss([r_i])$ is obtained by summing up the restriction maps ${\C}_{Z_\sigma}\ra{\C}_{Z_\tau}$ with $\Cech$ signs and applying $p_!\Vd$.

Note that $\hat{\C}_\sigma$ does not depend on a specific choice of $e_\sigma$.
We can observe that $\{Z_\sigma\}_{\rho_E\subset\sigma\in\HSigma(2)}$ forms a covering of  $Z_{\rho_E}\bs Z_1$. Moreover, $\{Z_\sigma\}_{\rho_E\subset \sigma\in\HSigma}$ coincides with the $\mathrm{\check{C}ech}$ covering obtained from $\{Z_\sigma\}_{\rho_E\subset\sigma\in\HSigma(2)}$. Hence, we have 
\begin{equation}
\C_{Z_1}\simeq \pa{\bigoplus_{\sigma\in \HSigma(1)}\C_{Z_\sigma}\ra\cdots\ra\bigoplus_{\sigma\in \HSigma(n-1)}\C_{Z_\sigma}\ra\bigoplus_{\sigma\in \HSigma(n)}\C_{Z_\sigma}}
\end{equation}
where the differentials in the RHS are the $\Cech$ differentials, and we regard the first term in the RHS is in degree 0.

To sum up, we have
\begin{equation}
\begin{split}
\Vd&\pa{\C_{p(Z_1)}}[-n]\\
&\simeq p_!\Vd\pa{\C_{Z_1}}[-n]\\
&\simeq p_!\Vd\pa{\bigoplus_{\sigma\in \HSigma(1)}\C_{Z_\sigma}\ra\cdots\ra\bigoplus_{\sigma\in \HSigma(n-1)}\C_{Z_\sigma}\ra\bigoplus_{\sigma\in \HSigma(n)}\C_{Z_\sigma}}[-n]\\
&\simeq \Kss\pa{\bigoplus_{\sigma\in \HSigma(n)}\coker\pa{\chi^{e_\sigma^\vee}}[n]\ra\bigoplus_{\sigma\in \HSigma(n-1)}\coker\pa{\chi^{e_\sigma^\vee}}[n]\ra\cdots\ra\bigoplus_{\sigma\in \HSigma(1)}\coker\pa{\chi^{e_\sigma^\vee}}[n]}[-n]\\
&\simeq \Kss\pa{\OX_E(E)}.
\end{split}
\end{equation}

By Theorem \ref{kappa} (2) and (3), we have
\begin{equation}
\begin{split}
\Kss\pa{\OX_{E}(kE)}&\simeq \Vd\pa{\C_{p(Z_1)}}\star\cdots\star\Vd\pa{\C_{p(Z_1)}}[-n]\\
&\simeq\Vd\pa{p_*\C_{Z_k}}[-n].
\end{split}
\end{equation}
This completes the proof.
\end{proof}

\begin{proof}[Proof of Theorem \ref{sod}]
We note that $\Kss\pa{\OX_E(kE)}$ is exceptional for $1\leq k\leq n-1$, since $\Kss$ is fully-faithful and $\OX_E(kE)$ is exceptional. We write the triangulated hull of $\OX_E(kE)$ by $\D_k$. Then, $\Kss\pa{\D_k}$ is an admissible full subcategory of $\Db{c}{\torus, \Lss}$. 
We have a semi-orthogonal decomposition \cite{BK},
\begin{equation}
\begin{split}
&\Db{c}{\torus, \Lss} \\
&\cong \hull{\Kss\pa{\D_{-n+1}}, ..., \Kss\pa{\D_{-1}},
^{\lperp}\hull{\Kss\pa{\D_{-n+1}}, ..., \Kss\pa{\D_{-1}}}}.
\end{split}
\end{equation} 
Hence, it is enough to show that $^{\perp}\hull{\Kss\pa{\D_{-n+1}},..., \Kss\pa{\D_{-1}}}=
\Db{c}{\torus, \Ls}$.

\subsection*{Step 1 ($\subset$)}
For $\E\in \Db{c}{\torus, \Ls}$, we have
\begin{equation}
\R\Hom\pa{\E, \Vd \pa{p_*\C_{Z_k}}}\simeq \R\Hom\pa{p_*\C_{Z_k}, \Vd(\E)}. \label{vanishing}
\end{equation}
We will show that this cohomology vanishes for $1\leq k\leq n-1$. 

Let us first reduce the vanishing of (\ref{vanishing}) to (2) of Lemma \ref{sumup}. We define
\begin{equation}
F:=\set{m=\sum_{i=1}^na_ie_i^\vee\in M_\R\relmid -1\leq a_i<0}\bs M
\end{equation}
and $\hat{Z}_k:=Z_k\cap F$, then 
\begin{equation}
Z_k\bs\hat{Z}_k=
\bigsqcup_{\substack{1\leq j\leq k\\l_1+\cdots+l_j=-k\\ 
I:=\set{i_1,...,i_j}\subset\set{1,...,n}}}
\set{m=\sum_{i=1}^na_ie_i^\vee\in Z_k\relmid 
\substack{l_p\leq a_{p}<l_p+1\text{ for any } p\in I, \\
-1\leq a_q<0 \text{ for any } q\in\set{1,...,k}\bs I}
}.
\end{equation}
By induction, we only have to show 
\begin{equation}
\R\Hom(p_*\C_{\hat{Z}_k}, \Vd\pa{\E})\simeq 0
\end{equation}
to prove the vanishing of (\ref{vanishing}). Since there exists an exact triangle for $k\geq 2$
\begin{equation}
\C_{\tilde{Z}_k}\ra \C_{\hat{Z}_k}\ra \C_{{Z}_{k-1}}\xrightarrow{[1]} \C_{\tilde{Z}_k}[1]
\end{equation}
where $\tilde{Z}_k:=\hat{Z}_k-\hat{Z}_{k-1}$, we can further reduce the vanishing to
\begin{equation}\label{vanishing2}
\R\Hom\pa{p_*\C_{\tilde{Z}_k},\Vd\pa{\E}}\simeq \RGL{p\pa{\tilde{Z}_k}}{\torus, \Vd\E}\simeq 0
\end{equation}
where we define $\tilde{Z}_1:=\hat{Z}_1$.

\begin{figure}
\begin{center}
\begin{tabular}{ccc}
\begin{minipage}{0.3\hsize}
\begin{tikzpicture}
\draw (-2,0) -- (0,-2);
\draw[dashed] (0,-2) -- (-2,-2);
\draw[dashed] (-2,0) -- (-2,-2);
\draw[dashed] (0,0) -- (0,-2);
\draw[dashed] (0,0) -- (-2,0);
\fill[gray, opacity=.2] (0,-2) -- (-2,0) -- (0,0) --cycle;
\node at (-0.6,-0.6) {$\tilde{Z}_1=Z$};
\node[above] at (0,0) {$O$};
\end{tikzpicture}
\end{minipage}

\begin{minipage}{0.3\hsize}
\begin{tikzpicture}
\draw[dashed] (0,-2) -- (-2,-2);
\draw[dashed] (-2,0) -- (-2,-2);
\draw[dashed] (0,0) -- (0,-2);
\draw[dashed] (0,0) -- (-2,0);
\fill[gray, opacity=.2] (0,-2) --(-2,-2)-- (-2,0) -- (0,0) --cycle;
\node at (-1,-1) {$U_1$};
\node[above] at (0,0) {$[O]$};
\end{tikzpicture}
\end{minipage}

\begin{minipage}{0.3\hsize}
\begin{tikzpicture}
\draw[dashed] (-2,0) -- (0,-2);
\draw[dashed] (0,-2) -- (-2,-2);
\draw[dashed] (-2,0) -- (-2,-2);
\draw[dashed] (0,0) -- (0,-2);
\draw[dashed] (0,0) -- (-2,0);
\fill[gray, opacity=.2] (0,-2) -- (-2,0) -- (-2,-2) --cycle;
\node at (-1.5,-1.5) {$U_2$};
\node[above] at (0,0) {$[O]$};
\end{tikzpicture}
\end{minipage}
\end{tabular}
\caption{$\tilde{Z}_1$, $U_1$, and $U_2$ in $n=2$}
\end{center}
\end{figure}
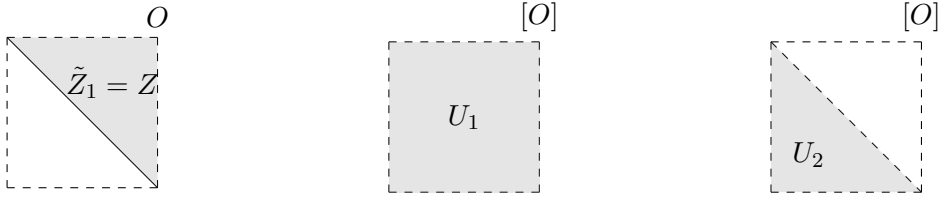

\begin{figure}
\begin{center}
\begin{tabular}{cc}
\begin{minipage}{0.4\hsize}
\begin{tikzpicture}[x=0.5cm,y=0.5cm,z=0.3cm, domain=-6:0]
\draw[dashed] (0,0,0)--(-3,0,0);
\draw[dashed] (0,0,0)--(0,-3,0);
\draw[dashed] (0,0,0)--(0,0,-3);
\draw[dashed] (-3,-3,0)--(-3,0,0);
\draw[dashed] (-3,-3,0)--(0,-3,0);
\draw[dashed] (0,-3,-3)--(0,0,-3);
\draw[dashed] (-3,0,-3)--(-3,0,0);
\draw[dashed] (0,-3,-3)--(0,-3,0);
\draw[dashed] (-3,0,-3)--(0,0,-3);
\draw[dashed] (-3,0,-3)--(-3,-3,-3);
\draw[dashed] (-3,-3,0)--(-3,-3,-3);
\draw[dashed] (-3,-3,-3)--(0,-3,-3);
\fill[gray, opacity=.1](0,0,0)--(-3,0,0)--(-3,-3,0)--(0,-3,0)--cycle;
\fill[gray, opacity=.1](0,0,0)--(0,-3,0)--(0,-3,-3)--(0,0,-3)--cycle;
\fill[gray, opacity=.1](0,0,0)--(-3,0,0)--(-3,0,-3)--(0,0,-3)--cycle;
\fill[gray, opacity=.1](-3,0,0)--(-3,-3,0)--(-3,0,-3)--cycle;
\fill[gray, opacity=.1](0,-3,0)--(0,-3,-3)--(-3,-3,0)--cycle;
\fill[gray, opacity=.1](0,0,-3)--(-3,0,-3)--(0,-3,-3)--cycle;
\fill[gray, opacity=.1](-3,-3,0)--(0,-3,-3)--(-3,0,-3)--cycle;
\node[right] at (0,0,0) {$O$};

\node at (-1.5,-1.5,-1.5) {$\hat{Z}_2$};
\draw (-3,-3,0)--(0,-3,-3)--(-3,0,-3)--cycle;
\end{tikzpicture}
\end{minipage}

\begin{minipage}{0.2\hsize}
\begin{tikzpicture}[x=0.5cm,y=0.5cm,z=0.3cm, domain=-6:0]
\draw[dashed] (0,0,0)--(-3,0,0);
\draw[dashed] (0,0,0)--(0,-3,0);
\draw[dashed] (0,0,0)--(0,0,-3);
\draw[dashed] (-3,-3,0)--(-3,0,0);
\draw[dashed] (-3,-3,0)--(0,-3,0);
\draw[dashed] (0,-3,-3)--(0,0,-3);
\draw[dashed] (-3,0,-3)--(-3,0,0);
\draw[dashed] (0,-3,-3)--(0,-3,0);
\draw[dashed] (-3,0,-3)--(0,0,-3);
\draw[dashed] (-3,0,-3)--(-3,-3,-3);
\draw[dashed] (-3,-3,0)--(-3,-3,-3);
\draw[dashed] (-3,-3,-3)--(0,-3,-3);
\fill[gray, opacity=.1](-3,0,0)--(-3,-3,0)--(0,-3,0)--cycle;
\fill[gray, opacity=.1](0,-3,0)--(0,-3,-3)--(0,0,-3)--cycle;
\fill[gray, opacity=.1](-3,0,0)--(-3,0,-3)--(0,0,-3)--cycle;
\fill[gray, opacity=.1](-3,0,0)--(-3,-3,0)--(-3,0,-3)--cycle;
\fill[gray, opacity=.1](0,-3,0)--(0,-3,-3)--(-3,-3,0)--cycle;
\fill[gray, opacity=.1](0,0,-3)--(-3,0,-3)--(0,-3,-3)--cycle;
\fill[gray, opacity=.1](-3,-3,0)--(0,-3,-3)--(-3,0,-3)--cycle;
\fill[gray, opacity=.1](-3,0,0)--(0,-3,0)--(0,0,-3)--cycle;

\node[right] at (0,0,0) {$O$};
\node at (-1.5,-1.5,-1.5) {$\tilde{Z}_2$};
\draw (-3,-3,0)--(0,-3,-3)--(-3,0,-3)--cycle;
\draw[dashed] (-3,0,0)--(0,-3,0)--(0,0,-3)--cycle;
\end{tikzpicture}
\end{minipage}
\end{tabular}
\caption{$\hat{Z}_2$ and $\tilde{Z}_2$ in $n$=3}
\end{center}
\end{figure}
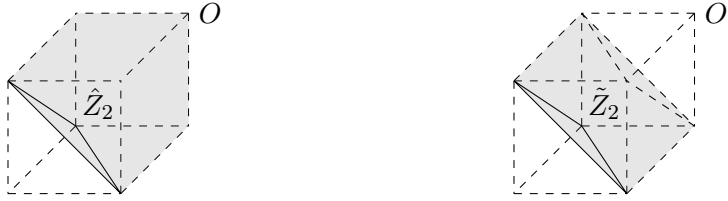

\begin{figure}
\begin{center}
\begin{tabular}{cc}
\begin{minipage}{0.5\hsize}
\begin{tikzpicture}[x=0.5cm,y=0.5cm,z=0.3cm, domain=-6:0]
\draw[dashed] (0,0,0)--(-3,0,0);
\draw[dashed] (0,0,0)--(0,-3,0);
\draw[dashed] (0,0,0)--(0,0,-3);
\draw[dashed] (-3,-3,0)--(-3,0,0);
\draw[dashed] (-3,-3,0)--(0,-3,0);
\draw[dashed] (0,-3,-3)--(0,0,-3);
\draw[dashed] (-3,0,-3)--(-3,0,0);
\draw[dashed] (0,-3,-3)--(0,-3,0);
\draw[dashed] (-3,0,-3)--(0,0,-3);
\draw[dashed] (-3,0,-3)--(-3,-3,-3);
\draw[dashed] (-3,-3,0)--(-3,-3,-3);
\draw[dashed] (-3,-3,-3)--(0,-3,-3);
\fill[gray, opacity=.1](-3,0,0)--(-3,-3,0)--(0,-3,0)--cycle;
\fill[gray, opacity=.1](0,-3,0)--(0,-3,-3)--(0,0,-3)--cycle;
\fill[gray, opacity=.1](-3,0,0)--(-3,0,-3)--(0,0,-3)--cycle;
\fill[gray, opacity=.1](-3,0,0)--(0,-3,0)--(0,0,-3)--cycle;
\fill[gray, opacity=.1](-3,0,0)--(-3,-3,0)--(-3,-3,-3)--(-3,0,-3)--cycle;
\fill[gray, opacity=.1](0,-3,0)--(0,-3,-3)--(-3,-3,-3)--(-3,-3,0)--cycle;
\fill[gray, opacity=.1](0,0,-3)--(-3,0,-3)--(-3,-3,-3)--(0,-3,-3)--cycle;

\node[right] at (0,0,0) {$[O]$};

\node at (-2,-2,-2) {$U_2$};
\draw[dashed] (-3,0,0)--(0,-3,0)--(0,0,-3)--cycle;
\end{tikzpicture}
\end{minipage}

\begin{minipage}{0.2\hsize}
\begin{tikzpicture}[x=0.5cm,y=0.5cm,z=0.3cm, domain=-6:0]
\draw[dashed] (0,0,0)--(-3,0,0);
\draw[dashed] (0,0,0)--(0,-3,0);
\draw[dashed] (0,0,0)--(0,0,-3);
\draw[dashed] (-3,-3,0)--(-3,0,0);
\draw[dashed] (-3,-3,0)--(0,-3,0);
\draw[dashed] (0,-3,-3)--(0,0,-3);
\draw[dashed] (-3,0,-3)--(-3,0,0);
\draw[dashed] (0,-3,-3)--(0,-3,0);
\draw[dashed] (-3,0,-3)--(0,0,-3);
\draw[dashed] (-3,0,-3)--(-3,-3,-3);
\draw[dashed] (-3,-3,0)--(-3,-3,-3);
\draw[dashed] (-3,-3,-3)--(0,-3,-3);
\fill[gray, opacity=.1](-3,0,-3)--(-3,-3,0)--(-3,-3,-3)--cycle;
\fill[gray, opacity=.1](-3,-3,0)--(0,-3,-3)--(-3,-3,-3)--cycle;
\fill[gray, opacity=.1](0,-3,-3)--(-3,0,-3)--(-3,-3,-3)--cycle;
\fill[gray, opacity=.1](-3,-3,0)--(0,-3,-3)--(-3,0,-3)--cycle;
\node[right] at (0,0,0) {[$O$]};

\node at (-2.5,-2.5,-2.5) {$U_3$};
\draw[dashed] (-3,-3,0)--(0,-3,-3)--(-3,0,-3)--cycle;
\end{tikzpicture}
\end{minipage}
\end{tabular}
\caption{$U_2$ and $U_3$ in $n$=3}
\end{center}
\end{figure}
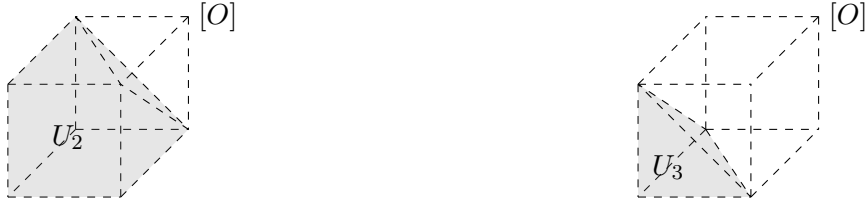

We define the subset $U_k$ of $\torus$ by
\begin{align}
U_{k}&:=p\pa{\tilde{U}_k}\\
\tilde{U}_k&:=F\bs \hat{Z}_{k-1},
\end{align} 
then $p\pa{\tilde{Z}_k}$ is a closed subset of $U_k$ (Figure 3, 4 and 5). Here we set $\hat{Z}_0=\varnothing$. In the following, we fix an integer $k$ with $1\leq k\leq n-1$. Then, we have the following exact triangle:
\begin{equation}\label{isomtriangle}
\RGL{p\pa{\tilde{Z}_k}}{U_k, \Vd\E}\ra \RG{U_k,\Vd\E}\ra \RG{U_{k+1}, \Vd\E}\xrightarrow{[1]}\RGL{p\pa{\tilde{Z}_k}}{U_k, \Vd\E}[1].
\end{equation}
Now the following lemma is clear:
\begin{lemma}\label{sumup}
For $\E\in \Db{c}{\torus,\Lss}$, the following are equivalent:
\begin{enumerate}[label={\upshape(\arabic*)}]
\item $\R\Hom(\E, \Vd(p_*\C_{Z_k}))\simeq 0$ for $1\leq k\leq n-1$.
\item $\RG{U_k,\Vd\E}\xrightarrow{\simeq} \RG{U_{k+1}, \Vd\E}$ for $1\leq k\leq n-1$.
\end{enumerate}
\end{lemma}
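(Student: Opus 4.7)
The plan is to observe that every reduction step carried out in the paragraphs preceding the lemma is bidirectional, and then to read off the equivalence directly from the exact triangle (\ref{isomtriangle}); no new geometric input beyond what has already been developed is required.

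First I would unwind the reduction that passes from $\C_{Z_k}$ through $\C_{\hat Z_k}$ to $\C_{\tilde Z_k}$. The displayed decomposition of $Z_k\setminus\hat Z_k$ exhibits it as a disjoint union of integer translates of $\hat Z_{k'}$ for $k'<k$; since $p\colon M_\R\to\torus$ kills $M$, applying $p_*$ identifies these pieces with copies of $p_*\C_{\hat Z_{k'}}$ and reduces the vanishing of $\R\Hom(\E,\Vd(p_*\C_{Z_k}))$ to the vanishing of $\R\Hom(p_*\C_{\hat Z_k},\Vd\E)$, modulo lower-index cases. Combined with the exact triangle $\C_{\tilde Z_k}\to\C_{\hat Z_k}\to\C_{Z_{k-1}}\xrightarrow{[1]}$, an induction on $k$ shows that condition (1) is equivalent to the vanishing of $\R\Hom(p_*\C_{\tilde Z_k},\Vd\E)\simeq\RGL{p(\tilde Z_k)}{\torus,\Vd\E}$ for $1\leq k\leq n-1$; both directions use only the two-out-of-three property of distinguished triangles.

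Since $p(\tilde Z_k)$ is closed in the open subset $U_k\subset\torus$, excision yields $\RGL{p(\tilde Z_k)}{\torus,\Vd\E}\simeq\RGL{p(\tilde Z_k)}{U_k,\Vd\E}$. The exact triangle (\ref{isomtriangle}) then makes the vanishing of this group equivalent to the restriction $\RG{U_k,\Vd\E}\to\RG{U_{k+1},\Vd\E}$ being an isomorphism, which is precisely condition (2). Concatenating the equivalences proves $(1)\Leftrightarrow(2)$.

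I do not expect a substantive obstacle, since the author has already arranged the relevant geometric decompositions and distinguished triangles. The only point deserving explicit verification is the bidirectionality of the inductive reduction from $Z_k$ down to $\tilde Z_k$: in one direction this is built into the author's setup, and in the other direction it follows by reversing the induction and using that each step comes from a distinguished triangle in which vanishing of any two vertices forces vanishing of the third.
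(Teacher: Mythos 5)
Your argument is correct and follows essentially the same route as the paper, which establishes exactly this chain of reductions (the decomposition of $Z_k\setminus\hat Z_k$ into lattice translates, the triangle relating $\C_{\tilde Z_k}$ and $\C_{\hat Z_k}$, and the triangle (\ref{isomtriangle})) and then declares the lemma ``clear.'' Your only addition is to spell out the bidirectionality of the inductive reduction via the two-out-of-three property of distinguished triangles, which is precisely the point the paper leaves implicit.
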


We will show the isomorphism (2) above in what follows. Let us define the family of open subsets $\mathcal{V}:=\set{V_s}_{(-\infty,1)}$ of $U_k$ (Figure 6 and 7) by 
\begin{equation}
V_s:=
\begin{cases}
U_{k+1} & \text{for}\hspace{1mm} s\in (-\infty, 0), \\
U_{k+1}\cup p\pa{\pa{\tilde{U}_k-\frac{1-s}{n}\cdot e_E}\cap F}& \text{for}\hspace{1mm} s\in [0, 1).
\end{cases}
\end{equation}
Then, $\mathcal{V}$ obviously satisfies (1) of Theorem \ref{nonchara}. 

For $s\leq t<1$, we have
\begin{equation}
V_t\bs V_s=
\begin{cases}
\varnothing \text{ or} \\
p\pa{\pa{\tilde{U}_k-\frac{1-s}{n}\cdot e_E}\cap \set{n\in N_\R\relmid \hull{n, e_E}\geq -k-t}\cap F}.
\end{cases}
\end{equation}
It follows that $V_t\bs V_s$ is relatively compact in $U_k$ i.e. $\mathcal{V}$ satisfies (2) of Theorem \ref{nonchara}.

For $s\leq t$, we have
\begin{equation}
\bigcap_{u>s}\Cl{V_u\bs V_s}\bs V_t=
\begin{cases}
p\pa{\pa{\partial\tilde{U}_k-\frac{1-s}{n}\cdot e_E}\cap F} &\text{if $s=t>0$},\\
\varnothing &\text{otherwise}.
\end{cases}
\end{equation}
Take $y\in p\pa{\pa{\partial\tilde{U}_k-\frac{1-s}{n}\cdot e_E}\cap F}$. We can see that there exists a neighbourhood $B(y)$ of $y$ such that $\gamma:=N_y\pa{U_k\bs V_s}$ and $U_k\bs V_s$ is canonically isomorphic in $B(y)$. The cone $\gamma$ is an $n$-dimensional closed polyhedral convex cone which is contained in $e_E^\vee$ under the canonical identification $T_y\torus\cong M_\R$. Take an $n$-dimensional proper closed polyhedral covex cone $\delta\subset N_\R$ such that $\sum_{i=1}^n\R_{\geq 0}\cdot e_i\bs\set{0}\subset \Int(\delta)$ and $\delta\subset \Int(e_E^\vee)$. We also have $\gamma\subset e_E^\vee$. Hence we have $-\delta\cap \gamma=\set{0}$. We can also see that $\delta^\vee\bs\set{0}\subset \sum_{i=1}^n\R_{>0}\cdot e_i^\vee$ and  
\begin{equation}
\begin{split}
\gamma^\vee =N^*_y(U_k\bs V_s)&\subset N^*_y\pa{p\pa{\tilde{U}_k-\frac{1-s}{n}\cdot e_E}}+ N_y^*\pa{p(F)}\\
&\subset \sum_{i=1}^n \R_{\geq 0}\cdot e_i.
\end{split}
\end{equation}
Hence we have $\Relint(\gamma^\vee)+\delta^\vee\subset \sum_{i=1}^n\R_{>0}\cdot e_i$.

Since $\ms(\Vd\E)=-\ms(\E)\subset -\Ls$, we have $U_k\times \pa{\bigcup_{i=1}^n\R_{>0}e_i}\cap \ms(\Vd\E)=\varnothing$. Hence, by Lemma \ref{edgevanishing}, we obtain $(\RGL{U_k\bs V_s}{\Vd\E})_{y}\simeq 0$. Then we can use Theorem \ref{nonchara} for $\mathcal{V}$ and we know the middle arrow of (\ref{isomtriangle}) is an isomorphism. By Lemma \ref{sumup}, we conclude that $\E\in ^{\perp}\hull{\Kss\pa{\D_{-n+1}},..., \Kss\pa{\D_{-1}}}$.

\subsection*{Step 2 ($\supset$)} Conversely, take $\E\in \lperp\hull{\Kss\pa{\D_{-r+1}}, ..., \Kss\pa{\D_{-1}}}$. We have to show that $\ms(\E)\subset \Ls$, or equivalently,
\begin{equation}\label{intersect}
\ms(\E)\cap \bigcup_{\substack{\rho_E\subset\sigma\in \HSigma\\ \dim\sigma<n}}p\pa{\Relint\pa{\pa{\sigma^\perp+M}\cap F}}\times(-\sigma)=\varnothing.
\end{equation}
For a proof by contradiction, we assume that there exists an element $([x_0],-\xi_0)$ in the LHS of the above (\ref{intersect}). Since $\ms(\E)$ is a conic Lagrangian subset \cite{KS}, we can assume that $\xi_0\in \Relint(\sigma)$ for some $\sigma\in \HSigma$ such that $\rho_E\subset \sigma$ and $\dim\sigma<n$. Fix $k\in \set{1,.., n-1}$, then we can take the unique lift $x_0=\sum_{i=1}^na_ie_i^\vee \in M_\R$ satisfying $-1\leq a_i<0$, $\sum_{i=1}^na_i=-k$.

We can see that there exists a neighbourhood $B([x_0])$ of $[x_0]$ such that $\tgamma:=N_{[x_0]}\pa{U_{k}\bs U_{k+1}}$ is canonically isomorphic to $U_k\bs U_{k+1}$ in $B([x_0])$. Moreover, via the canonical identification $T_{[x_0]}\torus\cong M_\R$, we have $\tgamma\cong \sigma^\vee$. On the other hand, we can take an $n$-dimensional proper polyhedral closed convex cone $\tdelta$ such that $\sum_{i=1}^n\R_{\geq0}\cdot e_i^\vee\bs\set{0}\subset \Int\pa{\tdelta}$, $\tdelta^\vee\cap\pa{\tgamma^\vee\bs\Relint\pa{\tgamma^\vee}}=\set{0}$, and $\xi_0\in \Int\pa{\tdelta^\vee}$.

The cones $\tgamma$ and $\tdelta$ clearly satisfy (ii)-(iv) of Lemma \ref{nonvanishing}. 
By the definitions of $\tgamma$ and $\tdelta$, $\pa{\tdelta^\vee+\tgamma^\vee}\subset \sum_{i=1}^n\R_{\geq 0}\cdot e_i^\vee$.
Since $\ms(\E)\subset \Lss$, we have 
\begin{equation}
\begin{split}
\ms(\Vd\E)\cap \pa{B([x_0])\bs \tgamma}\times \pa{\tdelta^\vee+\tgamma^\vee}&\subset -\Lss\cap \pa{B([x_0])\bs\tgamma}\times \pa{\sum_{i=1}^n\R_{\geq 0}e_i}\\
&\subset \bigcup_{0\neq\tau\prec\sigma}\tau^\perp\times \tau\cup\pa{B([x_0])\bs\tgamma}\times\set{0}\\
&=\bigsqcup_{0\neq\tau\prec\sigma=\tgamma^\vee}R_\tau\times \tau\cup\pa{B([x_0])\bs\tgamma}\times\set{0}.
\end{split}
\end{equation}
In the second line, we use the canonical identification $B([x_0])\hookrightarrow T_{[x_0]}\torus\cong M_\R$. This shows that $\tdelta$ and $\tgamma$ also satisfy \ref{281} of Lemma \ref{nonvanishing}.
 
By Lemma \ref{nonvanishing}, there exists $x$ near $x_0$ such that
\begin{equation}
\RGL{U_{k}\bs U_{k+1}}{p\pa{\pa{x-\tdelta}\cap \tilde{U}_k\bs\tilde{U}_{k+1}}, \Vd\E}\not\simeq 0.
\end{equation}
We define $S:=p\pa{\tilde{U}_k\bs \tilde{U}_{k+1}\cap \pa{x-\tdelta}}$. Then we have
\begin{equation}\label{notsim}
\RG{U_{k+1}\cup S,\Vd\E}\not\simeq \RG{U_{k+1},\Vd\E}.
\end{equation}

\begin{figure}
\begin{center}
\begin{tabular}{ccc}
\begin{minipage}{0.3\hsize}
\begin{tikzpicture}
\draw[dashed] (-2,0) -- (0,-2);
\draw[dashed] (0,-2) -- (-2,-2);
\draw[dashed] (-2,0) -- (-2,-2);
\draw[dashed] (0,0) -- (0,-2);
\draw[dashed] (0,0) -- (-2,0);
\fill[gray, opacity=.2] (0,-2) -- (-2,0) -- (-2,-2) --cycle;
\node at (-1.4,-1.4) {$V_0=U_2$};
\node[above] at (0,0) {$[O]$};
\end{tikzpicture}

\end{minipage}

\begin{minipage}{0.3\hsize}
\begin{tikzpicture}
\draw[dashed] (0,-2) -- (-0.5,-1.5)--(-0.5,-0.5)-- (-1.5, -0.5)--(-2,0);
\draw[dashed] (0,-2) -- (-2,-2);
\draw[dashed] (-2,0) -- (-2,-2);
\draw[dashed] (0,0) -- (0,-2);
\draw[dashed] (0,0) -- (-2,0);
\fill[gray, opacity=.2] (0,-2) -- (-0.5,-1.5)--(-0.5,-0.5)-- (-1.5, -0.5)--(-2,0)--(-2,-2)--cycle;
\node at (-1.4,-1.4) {$V_{0.5}$};
\node[above] at (0,0) {$[O]$};
\end{tikzpicture}
\end{minipage}

\begin{minipage}{0.3\hsize}
\begin{tikzpicture}
\draw[dashed] (0,-2) -- (-2,-2);
\draw[dashed] (-2,0) -- (-2,-2);
\draw[dashed] (0,0) -- (0,-2);
\draw[dashed] (0,0) -- (-2,0);
\fill[gray, opacity=.2] (0,-2) --(-2,-2)-- (-2,0) -- (0,0) --cycle;
\node at (-1,-1) {$V_1=U_1$};
\node[above] at (0,0) {$[O]$};
\end{tikzpicture}
\end{minipage}
\end{tabular}
\caption{$V_0$, $V_{0.5}$, and $V_1$ in $n=2$}
\end{center}
\end{figure}

\begin{figure}
\begin{center}
\begin{tabular}{cc}
\begin{minipage}{0.3\hsize}
\begin{tikzpicture}[x=0.5cm,y=0.5cm,z=0.3cm, domain=-6:0]
\draw[dashed] (0,0,0)--(-3,0,0);
\draw[dashed] (0,0,0)--(0,-3,0);
\draw[dashed] (0,0,0)--(0,0,-3);
\draw[dashed] (-3,-3,0)--(-3,0,0);
\draw[dashed] (-3,-3,0)--(0,-3,0);
\draw[dashed] (0,-3,-3)--(0,0,-3);
\draw[dashed] (-3,0,-3)--(-3,0,0);
\draw[dashed] (0,-3,-3)--(0,-3,0);
\draw[dashed] (-3,0,-3)--(0,0,-3);
\draw[dashed] (-3,0,-3)--(-3,-3,-3);
\draw[dashed] (-3,-3,0)--(-3,-3,-3);
\draw[dashed] (-3,-3,-3)--(0,-3,-3);
\fill[gray, opacity=.1](-3,0,-3)--(-3,-3,0)--(-3,-3,-3)--cycle;
\fill[gray, opacity=.1](-3,-3,0)--(0,-3,-3)--(-3,-3,-3)--cycle;
\fill[gray, opacity=.1](0,-3,-3)--(-3,0,-3)--(-3,-3,-3)--cycle;
\fill[gray, opacity=.1](-3,-3,0)--(0,-3,-3)--(-3,0,-3)--cycle;
\node[right] at (0,0,0) {[$O$]};

\node at (-2.5,-2.5,-2.5) {$V_0=U_3$};
\draw[dashed] (-3,-3,0)--(0,-3,-3)--(-3,0,-3)--cycle;
\end{tikzpicture}
\end{minipage}

\begin{minipage}{0.3\hsize}
\begin{tikzpicture}[x=0.5cm,y=0.5cm,z=0.3cm, domain=-6:0]
\draw[dashed] (0,0,0)--(-3,0,0);
\draw[dashed] (0,0,0)--(0,-3,0);
\draw[dashed] (0,0,0)--(0,0,-3);
\draw[dashed] (-3,-3,0)--(-3,0,0);
\draw[dashed] (-3,-3,0)--(0,-3,0);
\draw[dashed] (0,-3,-3)--(0,0,-3);
\draw[dashed] (-3,0,-3)--(-3,0,0);
\draw[dashed] (0,-3,-3)--(0,-3,0);
\draw[dashed] (-3,0,-3)--(0,0,-3);
\draw[dashed] (-3,0,-3)--(-3,-3,-3);
\draw[dashed] (-3,-3,0)--(-3,-3,-3);
\draw[dashed] (-3,-3,-3)--(0,-3,-3);
\fill[gray, opacity=.1](-3,0,-3)--(-3,-1,-2)--(-3,-1,-1.5)--(-3,-1.5,-1)--(-3,-2,-1)--(-3,-3,0)--(-3,-3,-3)--cycle;
\fill[gray, opacity=.1](-3,-3,0)--(-2,-3,-1)--(-1.5,-3,-1)--(-1,-3,-1.5)--(-1,-3,-2)--(0,-3,-3)--(-3,-3,-3)--cycle;
\fill[gray, opacity=.1](0,-3,-3)--(-1,-2,-3)--(-1,-1.5,-3)--(-1.5,-1,-3)--(-2,-1,-3)--(-3,0,-3)--(-3,-3,-3)--cycle;
\fill[gray, opacity=.1](-3,-3,0)--(-3,-2,-1)--(-2,-3,-1)--cycle;
\fill[gray, opacity=.1](-3,0,-3)--(-3,-1,-2)--(-2,-1,-3)--cycle;
\fill[gray, opacity=.1](0,-3,-3)--(-1,-3,-2)--(-1,-2,-3)--cycle;
\fill[gray, opacity=.1](-3,-2,-1)--(-2,-3,-1)--(-1.5,-3,-1)--(-3,-1.5,-1)--cycle;
\fill[gray, opacity=.1](-3,-1,-2)--(-2,-1,-3)--(-1.5,-1,-3)--(-3,-1,-1.5)--cycle;
\fill[gray, opacity=.1](-1,-2,-3)--(-1,-3,-2)--(-1,-3,-1.5)--(-1,-1.5,-3)--cycle;
\fill[gray, opacity=.1](-1,-3,-1.5)--(-1.5,-3,-1)--(-3,-1.5,-1)--(-3,-1,-1.5)--(-1.5,-1,-3)--(-1,-1.5,-3)--cycle;
\node[right] at (0,0,0) {[$O$]};

\node at (-2.5,-2.5,-2.5) {$V_{0.5}$};
\draw[dashed] (-3,-3,0)--(-3,-2,-1)--(-3,-1.5,-1)--(-3,-1,-1.5)--(-3,-1,-2)--(-3,0,-3)--(-2,-1,-3)--(-1.5,-1,-3)
--(-1,-1.5,-3)--(-1,-2,-3)--(0,-3,-3)--(-1,-3,-2)--(-1,-3,-1.5)--(-1.5,-3,-1)--(-2,-3,-1)--(-3,-3,0);
\draw[dashed](-3,-2,-1)--(-3,-1.5,-1)--(-1.5,-3,-1)--(-2,-3,-1);
\draw[dashed](-2,-3,-1)--(-1.5,-3,-1)--(-1,-3,-1.5)--(-1,-3,-2);
\draw[dashed](-1,-3,-2)--(-1,-3,-1.5)--(-1,-1.5,-3)--(-1,-2,-3);
\draw[dashed](-1,-2,-3)--(-1,-1.5,-3)--(-1.5,-1,-3)--(-2,-1,-3);
\draw[dashed](-2,-1,-3)--(-1.5,-1,-3)--(-3,-1,-1.5)--(-3,-1,-2);
\draw[dashed](-3,-1,-2)--(-3,-1,-1.5)--(-3,-1.5,-1)--(-3,-2,-1);
\end{tikzpicture}
\end{minipage}

\begin{minipage}{0.3\hsize}
\begin{tikzpicture}[x=0.5cm,y=0.5cm,z=0.3cm, domain=-6:0]
\draw[dashed] (0,0,0)--(-3,0,0);
\draw[dashed] (0,0,0)--(0,-3,0);
\draw[dashed] (0,0,0)--(0,0,-3);
\draw[dashed] (-3,-3,0)--(-3,0,0);
\draw[dashed] (-3,-3,0)--(0,-3,0);
\draw[dashed] (0,-3,-3)--(0,0,-3);
\draw[dashed] (-3,0,-3)--(-3,0,0);
\draw[dashed] (0,-3,-3)--(0,-3,0);
\draw[dashed] (-3,0,-3)--(0,0,-3);
\draw[dashed] (-3,0,-3)--(-3,-3,-3);
\draw[dashed] (-3,-3,0)--(-3,-3,-3);
\draw[dashed] (-3,-3,-3)--(0,-3,-3);
\fill[gray, opacity=.1](-3,0,0)--(-3,-3,0)--(0,-3,0)--cycle;
\fill[gray, opacity=.1](0,-3,0)--(0,-3,-3)--(0,0,-3)--cycle;
\fill[gray, opacity=.1](-3,0,0)--(-3,0,-3)--(0,0,-3)--cycle;
\fill[gray, opacity=.1](-3,0,0)--(0,-3,0)--(0,0,-3)--cycle;
\fill[gray, opacity=.1](-3,0,0)--(-3,-3,0)--(-3,-3,-3)--(-3,0,-3)--cycle;
\fill[gray, opacity=.1](0,-3,0)--(0,-3,-3)--(-3,-3,-3)--(-3,-3,0)--cycle;
\fill[gray, opacity=.1](0,0,-3)--(-3,0,-3)--(-3,-3,-3)--(0,-3,-3)--cycle;

\node[right] at (0,0,0) {$[O]$};

\node at (-2,-2,-2) {$V_1=U_2$};
\draw[dashed] (-3,0,0)--(0,-3,0)--(0,0,-3)--cycle;
\end{tikzpicture}
\end{minipage}

\end{tabular}
\caption{$V_0$, $V_{0.5}$ and $V_1$ in $n$=3}
\end{center}
\end{figure}

We define a family of open subsets $\mathcal{V}':=\{V'_s\}_{s\in(-\infty,1)}$ in $U_k$ (Figure 8) by
\begin{equation}
V'_s:=
\begin{cases}
U_{k+1} \cup S& \text{for}\hspace{1mm} s\in (-\infty, 0),\\
U_{k+1}\cup S\cup p\pa{F\cap\pa{s\cdot\pa{\tilde{U}_k-x}+x}}& \text{for}\hspace{1mm} s\in [0, 1).
\end{cases}
\end{equation}
We can see that the family $\mathcal{V}'$ satisfies (1) and (2) of Theorem \ref{nonchara}. 

For $s\leq t$, we have
\begin{equation}
\bigcap_{u>s}\Cl{V'_u\bs V'_s}\bs V'_t
=\begin{cases}
W_1\cup W_2\cup W_3 &\text{$s=t\geq 0$,}\\
\varnothing & \text{otherwise}.
\end{cases}
\end{equation}
where
\begin{align}
W_1&:=p\pa{F\cap\pa{s\cdot\pa{\partial\tilde{U}_k-x}+x}}\bs\pa{\Cl{U_{k+1}}\cup S},\\
W_2&:=\partial S \cap p\pa{F\cap\pa{s\cdot\pa{\partial\tilde{U}_k-x}+x}}\bs W_3,\\
W_3&:=p\pa{\set{m\in M_\R\relmid \hull{m,e_E}=-k-1}\cap\pa{s\cdot\pa{\partial\tilde{U}_k-x}+x}}.
\end{align}
Some examples of $W_i$ are depicted in Figure 9.
We take $y\in \bigcap_{u>s}\Cl{V'_u\bs V'_s}\bs V'_s$ for $0\leq s<1$ and define a cone $\hgamma$ as $\hgamma:=N_y\pa{U_k\bs V'_s}$. There exists a neighbourhood $B(y')$ of $y'$ such that $\gamma$ is canonically isomorphic to $U_k\bs V_s'$ in $B(y')$.

If $y\in W_2$, we have $\hgamma^\vee \subset \sum_{i=1}^n\R_{\geq 0}\cdot e_i$ via the canonical identification $T_y^*\torus \cong N_\R$. Since $\Vd\E$ does not have its microsupport in its first quadrant at $y$, we have $\pa{\RGL{U_k\bs V_s'}{\Vd\E}}_y\simeq 0$ by Lemma \ref{cone2}.

If $y\in W_1$, we have $\hgamma^\vee\subset \sum_{i=0}\R_{\geq 0}\cdot e_i$. On the other hand, the microsupport of $\Vd\E$ at $y$ is contained in a face of $\sum_{i=1}^n\R_{\geq 0}\cdot e_i$. Hence we have $\pa{\RGL{U_k\bs V_s'}{\Vd\E}}_y\simeq 0$ by Lemma \ref{edgevanishing}.

If $y\in W_3$, $\hgamma$ is contained in some $\sigma\in \HSigma$ such that $\rho_E\subset \sigma$. Hence, again by Lemma \ref{edgevanishing}, we have $\pa{\RGL{U_k\bs V_s'}{\Vd\E}}_y\simeq 0$.

%

Then, by using Theorem \ref{nonchara} for $\mathcal{V}'$, we have
\begin{equation}\label{sim}
\RG{U_k,\Vd\E}\simeq \RG{U_{k+1}\cup S,\Vd\E}.
\end{equation}

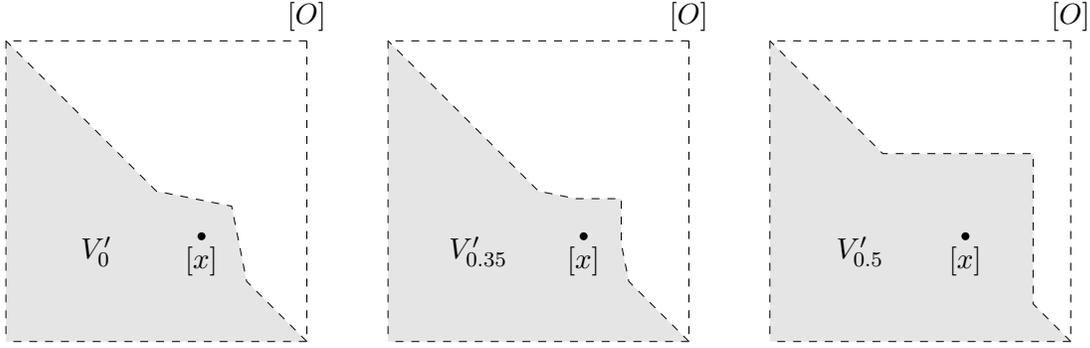
\begin{figure}
\begin{center}
\begin{tabular}{ccc}
\begin{minipage}{0.3\hsize}
\begin{tikzpicture}
\draw[dashed] (0,-4) -- (-4,-4);
\draw[dashed] (-4,0) -- (-4,-4);
\draw[dashed] (0,0) -- (0,-4);
\draw[dashed] (0,0) -- (-4,0);
\draw[dashed](0,-4) -- (-0.8,-3.2)--(-1.0,-2.2)-- (-2.0, -2.0)--(-4,0);
\fill[gray, opacity=.2] (0,-4) -- (-0.8,-3.2)--(-1,-2.2)-- (-2, -2)--(-4,0)--(-4,-4)--cycle;
\fill (-1.4,-2.6) circle(1.5pt);
\node[below] at (-1.4,-2.6) {$[x]$}; 
\node at (-2.8,-2.8) {$V_0'$};
\node[above] at (0,0) {$[O]$};
\end{tikzpicture}
\end{minipage}

\begin{minipage}{0.3\hsize}
\begin{tikzpicture}
\draw[dashed] (0,-4) -- (-4,-4);
\draw[dashed] (-4,0) -- (-4,-4);
\draw[dashed] (0,0) -- (0,-4);
\draw[dashed] (0,0) -- (-4,0);
\draw[dashed] (0,-4) -- (-0.8,-3.2)--(-0.9,-2.7)--(-0.9,-2.1)--(-1.5,-2.1)--(-2, -2)--(-4,0);
\fill[gray, opacity=.2] (0,-4) -- (-0.8,-3.2)--(-0.9,-2.7)--(-0.9,-2.1)--(-1.5,-2.1)--(-2, -2)--(-4,0)--(-4,-4)--cycle;
\fill (-1.4,-2.6) circle(1.5pt);
\node[below] at (-1.4,-2.6) {$[x]$}; 
\node at (-2.8,-2.8) {$V_{0.35}'$};
\node[above] at (0,0) {$[O]$};
\end{tikzpicture}
\end{minipage}

\begin{minipage}{0.3\hsize}
\begin{tikzpicture}
\draw[dashed] (0,-4) -- (-0.5,-3.5)--(-0.5,-1.5)-- (-2.5, -1.5)--(-4,0);
\draw[dashed] (0,-4) -- (-4,-4);
\draw[dashed] (-4,0) -- (-4,-4);
\draw[dashed] (0,0) -- (0,-4);
\draw[dashed] (0,0) -- (-4,0);
\fill[gray, opacity=.2] (0,-4) -- (-0.5,-3.5)--(-0.5,-1.5)-- (-2.5, -1.5)--(-4,0)--(-4,-4)--cycle;
\fill (-1.4,-2.6) circle(1.5pt);
\node[below] at (-1.4,-2.6) {$[x]$}; 
\node at (-2.8,-2.8) {$V_{0.5}'$};
\node[above] at (0,0) {$[O]$};
\end{tikzpicture}
\end{minipage}

\end{tabular}
\caption{$V'_0$, $V'_{0.35}$, and $V'_{0.5}$ in $n=2$}
\end{center}
\end{figure}

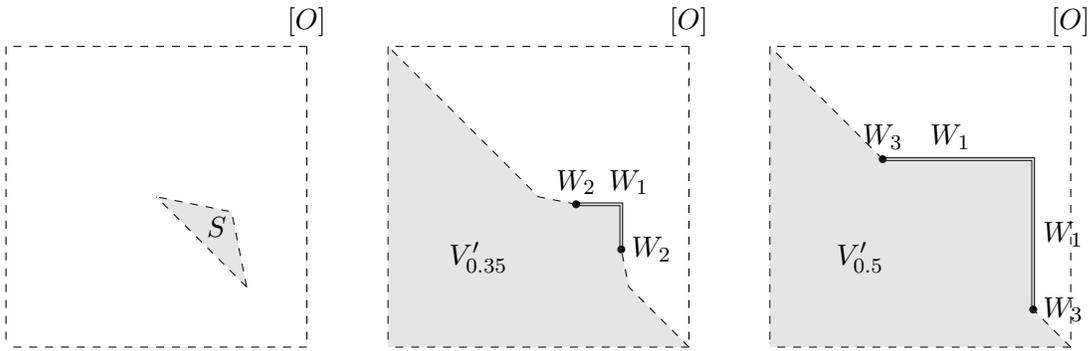
\begin{figure}
\begin{center}
\begin{tabular}{ccc}
\begin{minipage}{0.3\hsize}
\begin{tikzpicture}
\draw[dashed] (0,-4) -- (-4,-4);
\draw[dashed] (-4,0) -- (-4,-4);
\draw[dashed] (0,0) -- (0,-4);
\draw[dashed] (0,0) -- (-4,0);
\draw[dashed](-0.8,-3.2)--(-1.0,-2.2)-- (-2.0, -2.0);
\draw[dashed](-0.8,-3.2)--(-2.0, -2.0);
\fill[gray, opacity=.2](-0.8,-3.2)--(-1,-2.2)-- (-2, -2)--cycle;
\node at (-1.2,-2.4) {$S$};
\node[above] at (0,0) {$[O]$};
\end{tikzpicture}
\end{minipage}

\begin{minipage}{0.3\hsize}
\begin{tikzpicture}
\draw[dashed] (0,-4) -- (-4,-4);
\draw[dashed] (-4,0) -- (-4,-4);
\draw[dashed] (0,0) -- (0,-4);
\draw[dashed] (0,0) -- (-4,0);
\draw[double] (-0.9,-2.7)--(-0.9,-2.1)--(-1.5,-2.1);
\fill (-0.9,-2.7) circle(1.5pt);
\fill (-1.5,-2.1) circle(1.5pt);
\draw[dashed] (0,-4) -- (-0.8,-3.2)--(-0.9,-2.7);
\draw[dashed] (-1.5,-2.1)--(-2, -2)--(-4,0);
\fill[gray, opacity=.2] (0,-4) -- (-0.8,-3.2)--(-0.9,-2.7)--(-0.9,-2.1)--(-1.5,-2.1)--(-2, -2)--(-4,0)--(-4,-4)--cycle;
\node[above] at (-0.8,-2.1) {$W_1$};
\node[above] at (-1.5,-2.1) {$W_2$};
\node[right] at (-0.9,-2.7) {$W_2$};
\node at (-2.8,-2.8) {$V_{0.35}'$};
\node[above] at (0,0) {$[O]$};
\end{tikzpicture}
\end{minipage}

\begin{minipage}{0.3\hsize}
\begin{tikzpicture}
\draw[double] (-0.5,-3.5)--(-0.5,-1.5)-- (-2.5, -1.5);
\fill (-0.5,-3.5) circle(1.5pt);
\fill (-2.5,-1.5) circle(1.5pt);
\draw[dashed] (-2.5, -1.5)--(-4,0);
\draw[dashed] (0,-4) -- (-0.5,-3.5);
\draw[dashed] (0,-4) -- (-4,-4);
\draw[dashed] (-4,0) -- (-4,-4);
\draw[dashed] (0,0) -- (0,-4);
\draw[dashed] (0,0) -- (-4,0);
\fill[gray, opacity=.2] (0,-4) -- (-0.5,-3.5)--(-0.5,-1.5)-- (-2.5, -1.5)--(-4,0)--(-4,-4)--cycle;
\node[above] at (-1.6,-1.5) {$W_1$};
\node[right] at (-0.5,-2.5) {$W_1$};
\node[right] at (-0.5,-3.5) {$W_3$};
\node[above] at (-2.5,-1.5) {$W_3$};
\node at (-2.8,-2.8) {$V_{0.5}'$};
\node[above] at (0,0) {$[O]$};
\end{tikzpicture}
\end{minipage}

\end{tabular}
\caption{$S$, $W_i$ in $n=2$}
\end{center}
\end{figure}

\if0
\begin{minipage}{0.3\hsize}
\begin{tikzpicture}
\draw[dashed] (0,-2) -- (-2,-2);
\draw[dashed] (-2,0) -- (-2,-2);
\draw[dashed] (0,0) -- (0,-2);
\draw[dashed] (0,0) -- (-2,0);
\fill[gray, opacity=.2] (0,-2) --(-2,-2)-- (-2,0) -- (0,0) --cycle;
\node at (-1,-1) {$V_1'=U_1$};
\node[above] at (0,0) {$[O]$};
\end{tikzpicture}
\end{minipage}
\end{tabular}
\caption{$V'_0$, $V'_{0.5}$, and $V'_1$ in $n=2$}
\end{center}
\end{figure}
\fi

\if0
\begin{figure}
\begin{tabular}{cc}
\begin{minipage}{0.3\hsize}
\begin{tikzpicture}[x=0.5cm,y=0.5cm,z=0.3cm, domain=-6:0]
\draw[dashed] (0,0,0)--(-3,0,0);
\draw[dashed] (0,0,0)--(0,-3,0);
\draw[dashed] (0,0,0)--(0,0,-3);
\draw[dashed] (-3,-3,0)--(-3,0,0);
\draw[dashed] (-3,-3,0)--(0,-3,0);
\draw[dashed] (0,-3,-3)--(0,0,-3);
\draw[dashed] (-3,0,-3)--(-3,0,0);
\draw[dashed] (0,-3,-3)--(0,-3,0);
\draw[dashed] (-3,0,-3)--(0,0,-3);
\draw[dashed] (-3,0,-3)--(-3,-3,-3);
\draw[dashed] (-3,-3,0)--(-3,-3,-3);
\draw[dashed] (-3,-3,-3)--(0,-3,-3);
\fill[gray, opacity=.1](-3,0,-3)--(-3,-3,0)--(-3,-3,-3)--cycle;
\fill[gray, opacity=.1](-3,-3,0)--(0,-3,-3)--(-3,-3,-3)--cycle;
\fill[gray, opacity=.1](0,-3,-3)--(-3,0,-3)--(-3,-3,-3)--cycle;
\fill[gray, opacity=.1](-3,-3,0)--(0,-3,-3)--(-3,0,-3)--cycle;

\draw[dashed] (-2.7,-1.35,-1.35)--(-3,-1.35,-1.35)--(-3,-1.35,-1.65)--(-2.7,-1.35,-1.95)--cycle;
\draw[dashed] (-2.7,-1.35,-1.35)--(-2.7,-1.35,-1.95)--(-2.7,-1.95,-1.35)--cycle;
\draw[dashed] (-2.7,-1.35,-1.35)--(-2.7,-1.95,-1.35)--(-3,-1.65,-1.35)--(-3,-1.35,-1.35)--cycle;
\fill[gray, opacity=.1](-2.7,-1.35,-1.35)--(-3,-1.35,-1.35)--(-3,-1.35,-1.65)--(-2.7,-1.35,-1.95)--cycle;
\fill[gray, opacity=.1](-2.7,-1.35,-1.35)--(-2.7,-1.35,-1.95)--(-2.7,-1.95,-1.35)--cycle;
\fill[gray, opacity=.1](-2.7,-1.35,-1.35)--(-2.7,-1.95,-1.35)--(-3,-1.65,-1.35)--(-3,-1.35,-1.35)--cycle;
\fill (-3,-1.5,-1.5) circle(1.3pt);

\node[right] at (0,0,0) {[$O$]};

\node at (-2.5,-2.5,-2.5) {$V_0'$};
\draw[dashed] (-3,-3,0)--(0,-3,-3)--(-3,0,-3)--cycle;
\end{tikzpicture}
\end{minipage}

\begin{minipage}{0.3\hsize}
\begin{tikzpicture}[x=0.5cm,y=0.5cm,z=0.3cm, domain=-6:0]
\draw[dashed] (0,0,0)--(-3,0,0);
\draw[dashed] (0,0,0)--(0,-3,0);
\draw[dashed] (0,0,0)--(0,0,-3);
\draw[dashed] (-3,-3,0)--(-3,0,0);
\draw[dashed] (-3,-3,0)--(0,-3,0);
\draw[dashed] (0,-3,-3)--(0,0,-3);
\draw[dashed] (-3,0,-3)--(-3,0,0);
\draw[dashed] (0,-3,-3)--(0,-3,0);
\draw[dashed] (-3,0,-3)--(0,0,-3);
\draw[dashed] (-3,0,-3)--(-3,-3,-3);
\draw[dashed] (-3,-3,0)--(-3,-3,-3);
\draw[dashed] (-3,-3,-3)--(0,-3,-3);
\fill[gray, opacity=.1](-3,0,-3)--(-3,-1,-2)--(-3,-1,-1.5)--(-3,-1.5,-1)--(-3,-2,-1)--(-3,-3,0)--(-3,-3,-3)--cycle;
\fill[gray, opacity=.1](-3,-3,0)--(-2,-3,-1)--(-1.5,-3,-1)--(-1,-3,-1.5)--(-1,-3,-2)--(0,-3,-3)--(-3,-3,-3)--cycle;
\fill[gray, opacity=.1](0,-3,-3)--(-1,-2,-3)--(-1,-1.5,-3)--(-1.5,-1,-3)--(-2,-1,-3)--(-3,0,-3)--(-3,-3,-3)--cycle;
\fill[gray, opacity=.1](-3,-3,0)--(-3,-2,-1)--(-2,-3,-1)--cycle;
\fill[gray, opacity=.1](-3,0,-3)--(-3,-1,-2)--(-2,-1,-3)--cycle;
\fill[gray, opacity=.1](0,-3,-3)--(-1,-3,-2)--(-1,-2,-3)--cycle;
\fill[gray, opacity=.1](-3,-2,-1)--(-2,-3,-1)--(-1.5,-3,-1)--(-3,-1.5,-1)--cycle;
\fill[gray, opacity=.1](-3,-1,-2)--(-2,-1,-3)--(-1.5,-1,-3)--(-3,-1,-1.5)--cycle;
\fill[gray, opacity=.1](-1,-2,-3)--(-1,-3,-2)--(-1,-3,-1.5)--(-1,-1.5,-3)--cycle;
\fill[gray, opacity=.1](-1,-3,-1.5)--(-1.5,-3,-1)--(-3,-1.5,-1)--(-3,-1,-1.5)--(-1.5,-1,-3)--(-1,-1.5,-3)--cycle;
\node[right] at (0,0,0) {[$O$]};
\fill (-3,-1.5,-1.5) circle(1.3pt);

\node at (-2.5,-2.5,-2.5) {$V_{0.5}'$};
\draw[dashed] (-3,-3,0)--(-3,-2,-1)--(-3,-1.5,-1)--(-3,-1,-1.5)--(-3,-1,-2)--(-3,0,-3)--(-2,-1,-3)--(-1.5,-1,-3)
--(-1,-1.5,-3)--(-1,-2,-3)--(0,-3,-3)--(-1,-3,-2)--(-1,-3,-1.5)--(-1.5,-3,-1)--(-2,-3,-1)--(-3,-3,0);
\draw[dashed](-3,-2,-1)--(-3,-1.5,-1)--(-1.5,-3,-1)--(-2,-3,-1);
\draw[dashed](-2,-3,-1)--(-1.5,-3,-1)--(-1,-3,-1.5)--(-1,-3,-2);
\draw[dashed](-1,-3,-2)--(-1,-3,-1.5)--(-1,-1.5,-3)--(-1,-2,-3);
\draw[dashed](-1,-2,-3)--(-1,-1.5,-3)--(-1.5,-1,-3)--(-2,-1,-3);
\draw[dashed](-2,-1,-3)--(-1.5,-1,-3)--(-3,-1,-1.5)--(-3,-1,-2);
\draw[dashed](-3,-1,-2)--(-3,-1,-1.5)--(-3,-1.5,-1)--(-3,-2,-1);
\end{tikzpicture}
\end{minipage}

\begin{minipage}{0.3\hsize}
\begin{tikzpicture}[x=0.5cm,y=0.5cm,z=0.3cm, domain=-6:0]
\draw[dashed] (0,0,0)--(-3,0,0);
\draw[dashed] (0,0,0)--(0,-3,0);
\draw[dashed] (0,0,0)--(0,0,-3);
\draw[dashed] (-3,-3,0)--(-3,0,0);
\draw[dashed] (-3,-3,0)--(0,-3,0);
\draw[dashed] (0,-3,-3)--(0,0,-3);
\draw[dashed] (-3,0,-3)--(-3,0,0);
\draw[dashed] (0,-3,-3)--(0,-3,0);
\draw[dashed] (-3,0,-3)--(0,0,-3);
\draw[dashed] (-3,0,-3)--(-3,-3,-3);
\draw[dashed] (-3,-3,0)--(-3,-3,-3);
\draw[dashed] (-3,-3,-3)--(0,-3,-3);
\fill[gray, opacity=.1](-3,0,0)--(-3,-3,0)--(0,-3,0)--cycle;
\fill[gray, opacity=.1](0,-3,0)--(0,-3,-3)--(0,0,-3)--cycle;
\fill[gray, opacity=.1](-3,0,0)--(-3,0,-3)--(0,0,-3)--cycle;
\fill[gray, opacity=.1](-3,0,0)--(0,-3,0)--(0,0,-3)--cycle;
\fill[gray, opacity=.1](-3,0,0)--(-3,-3,0)--(-3,-3,-3)--(-3,0,-3)--cycle;
\fill[gray, opacity=.1](0,-3,0)--(0,-3,-3)--(-3,-3,-3)--(-3,-3,0)--cycle;
\fill[gray, opacity=.1](0,0,-3)--(-3,0,-3)--(-3,-3,-3)--(0,-3,-3)--cycle;

\node[right] at (0,0,0) {$[O]$};

\node at (-2,-2,-2) {$V_1'=U_2$};
\draw[dashed] (-3,0,0)--(0,-3,0)--(0,0,-3)--cycle;
\end{tikzpicture}
\end{minipage}

\end{tabular}
\caption{$U_2$ and $U_3$ in $n$=3}
\end{figure}
\fi

As a consequence, we have the following commutative diagram:
\begin{equation}\label{diagram}
\xymatrix{\ar@{}[rrd]|{\circlearrowright}
\RG{U_{k+1}\cup S, \Vd\E}\ar[rr]^{\not\sim} &&\RG{U_{k+1},\Vd\E}\\
&\RG{U_k,\Vd\E}\ar[lu]^\sim\ar[ru]_{\sim}&}
\end{equation}
Since all arrows in (\ref{diagram}) are restriction morphisms, this diagram is commutative.
The upper arrow is (\ref{notsim}) and the right arrow is the isomorphism (\ref{sim}). The left arrow is also an isomorphism by the assumption and Lemma \ref{sumup}. This diagram is obviously absurd. Hence, $\pa{[x_0],\xi_0}\not\in \ms(\E)$. This completes the proof.
\end{proof}


\section{Proofs of the main theorems}

\begin{proof}[Proof of Theorem \ref{iff}]
First, suppose that $\Kss\colon\Db{}{\coh\Xss}\rightarrow \Db{c}{\torus, \Lss}$ is an
equivalence. Then, semi-orthogonal decomposition described in Orlov's theorem (\ref{orlov}) implies
\begin{equation}
^\perp\hull{\Kss\pa{\OX_E\pa{kE}}}_{1\leq k\leq n-1}\cong\Kss\pa{\pi^*\Db{}{\coh\Xs}}.
\end{equation}
On the other hand, Theorem \ref{sod} implies
\begin{equation}
^\perp\hull{\Kss\pa{\OX_E\pa{kE}}}_{1\leq k\leq n-1}\cong\iota\pa{\Db{c}{\torus, \Lss}}.
\end{equation}
Hence,
\begin{equation}
\begin{split}
\Ks\pa{\Db{}{\coh\Xs}}&\cong \Kss\pa{\pi^*\Db{}{\coh \Xs}} \\
&\cong ^\perp\hspace{-1mm}\hull{\Kss\pa{\OX_E(kE)}}_{1\leq k\leq n-1}\\
&\cong \iota\pa{\Db{c}{\torus, \Ls}}\\
&\cong \Db{c}{\torus, \Ls}
\end{split}
\end{equation}
where the equivalence in the first line is Theorem \ref{kappa} (1). 

Conversely, we assume that Conjecture \ref{conjecture} holds for $\Sigma$. Take $\E\in
\kappa_{\hat{\Sigma}}(\D^\ueb(\coh\Xss))^\perp$, then $\E\in \kappa_{\hat{\Sigma}}(\mathcal{O}_E(kE))^\perp$ for $1\leq k\leq n-1$ and $\E \in \kappa_{\hat{\Sigma}}(\pi^*\D^\ueb\coh(\Xs)))\cong \iota(\D^\ueb(\torus,
\overline{\Lambda_{\Sigma}}))^\perp$. Hence, by Theorem \ref{sod}, $\E\cong 0$. This completes the proof.
\end{proof}

Using Theorem \ref{iff}, we have a proof of Conjecture \ref{conjecture} for smooth complete toric surfaces.

\begin{proof}[Proof of Theorem \ref{intromain}]
We use toric minimal model program for toric surfaces. For any toric surfaces, after some toric blow-downs, we have
$\mathbb{P}^2$, $\mathbb{P}^1\times \mathbb{P}^1$ or Hirzebruch surfaces $\mathbb{F}_n$. By Theorem $\ref{iff}$, it is
enough to show that Conjecture \ref{conjecture} holds for these toric minimal models. Moreover, $\mathbb{P}^2$ and $\mathbb{F}_n$
are obtained by successive blow-ups and blow-downs of $\mathbb{P}^1\times \mathbb{P}^1$. Hence, all cases may be reduced to the Conjecture \ref{conjecture} for
$\mathbb{P}^1\times \mathbb{P}^1$. The last case has already been shown by Treumann (Theorem \ref{known} (1)). This completes the proof.
\end{proof}

Finally, we obtain Corollary \ref{corhms} by combining Theorem \ref{intromain} with Theorem \ref{hms}.

\bibliographystyle{alpha}
\bibliography{blowupinccc.bib}
\end{document}